\newcounter{propcounter}
\numberwithin{figure}{section}
\newtheorem{theorem}{Theorem}[section]
\newtheorem{lemma}[theorem]{Lemma}
\newtheorem{corollary}[theorem]{Corollary}
\newtheorem{proposition}[theorem]{Proposition}
\newtheorem{definition}[theorem]{Definition}
\newtheorem{claim}[theorem]{Claim}
\def\mod{{\rm mod}}
\title{Balanced subdivisions and cycles lengths in $K_{s, t}$-free graphs
}{}
\author{
        Jianfeng Hou$^{1}$\thanks{Research was supported by National Key R\&D Program of China (Grant No. 2023YFA1010202), National Natural Science Foundation of China (Grant No. 12071077), the Central Guidance on Local Science and Technology Development Fund of Fujian Province (Grant No. 2023L3003). Email: \texttt{jfhou@fzu.edu.cn}},~ Yindong Jin$^{2}$\thanks{Corresponding author. E-mail: \texttt{ydjin7@163.com}},~ Donglei Yang$^{3}$\thanks{ Supported
        by Natural Science Foundation of China (12101365) and by Natural Science Foundation of Shandong Province (ZR2021QA029). Email: \texttt{dlyang@sdu.edu.cn} },~Fan Yang$^4$\thanks{  Supported by Natural Science Foundation of China (12301447), by Natural Science Foundation of Shandong Province (ZR2024QA056), by China Scholarship Council and IBS-R029-C4. Email: \texttt{fyang@sdu.edu.cn}} ~ \\
{\small 1 Center for Discrete Mathematics, Fuzhou University, Fujian, China}\\
{\small 2  School of Mathematics and Statistics, Linyi University, Shandong, China}\\
{\small 3 School of Mathematics, Shandong University, Shandong, China}\\
    {\small 4 Data Science Institute, Shandong University, Shandong, China \& }
\\
{\small Extremal Combinatorics and Probability Group (ECOPRO), Institute for Basic Science (IBS), Daejeon, South Korea.}
      }  
\date{}
\begin{document}
\linespread{1.2}
\openup 1.0\jot
\date{}\maketitle
\begin{abstract}

Confirming a conjecture of Mader from 1999, Liu and Montgomery [J. Lond. Math. Soc., 2017] showed that for integers $t\ge s\ge2$, every  $K_{s, t}$-free graph with average degree $d$ contains a  subdivision of a clique with at least $\Omega(d^{\frac{s}{2(s-1)}})$ vertices. We give an improvement  by showing that such a graph  contains a balanced clique subdivision of the same order,  where a balanced subdivision means that each edge is subdivided the same number of times.

In 1975, Erd\H{o}s asked  whether the sum of the reciprocals of the cycle lengths in a graph with infinite average degree $d$ is necessarily infinite. Recently, Liu and Montgomery [J. Amer. Math. Soc., 2023] confirmed the asymptotically sharp lower bound $\left(\frac{1}{2} -o_d(1)\right) \log d$ on the reciprocals of the cycle lengths. In this paper,  we extend the lower bound to $\left(\frac{s}{2(s-1)} -o_d(1)\right) \log d$ for $K_{s, t}$-free graphs.

Both proofs of our results use the graph   sublinear expansion property as well as some novel structural techniques.

\bigskip

\noindent {\bf Keywords:} balanced subdivision, average degree, sublinear expander, cycle
\end{abstract}
\baselineskip 20pt

\section{Introduction}

Given a graph $H$, a \emph{subdivision} of $H$, denoted by $\mathrm{T}H$,  is a graph obtained from $H$ by replacing some edges of $G$ with internally vertex-disjoint paths. The original vertices of $H$ are the \emph{branch
vertices} of $\mathrm{T}H$, and the new vertices are called \emph{subdividing vertices}. A typical example is  the seminal result of Kuratowski in 1930 stating that a graph is
planar if and only if it does not contain a  $\mathrm{T}K_5$ or $\mathrm{T}K_{3,3}$, where $K_s$ is a complete graph of order $s$, and $K_{s,t}$ is a complete  bipartite graph with part sizes $s$ and $t$.

A fundamental extremal  problem in this topic is to find the smallest average degree $d:=d(k)$ of graphs forcing the appearance of $\mathrm{T}K_k$ as a subgraph.   This was initially studied by  Mader  \cite{1967mader} who showed that such a $d(k)$ exists. Later, Mader \cite{1967mader}, and independently Erd\H{o}s and Hajnal \cite{erd} conjectured that $d(k) = O(k^2)$. After some further results by Mader \cite{mader}, the conjecture was confirmed by Bollob\'{a}s and Thomason \cite{bollo}, and independently by Koml\'{o}s and Szemer\'{e}di  \cite{1994jk, 1996jk}. For graphs that are far from disjoint union of complete bipartite graphs, one can actually do better. Let $C_k$ denote a cycle of length $k$ for an integer $k\ge 3$. Given a graph $H$, we say that a graph is \emph{$H$-free} if it does not contain $H$ as a subgraph. Mader \cite{1999mader} conjectured that for every $C_4$-free graph with average degree $d$ contains a $\mathrm{T}K_{\Omega(d)}$. K\"{u}hn and Osthus \cite{kuhn2002, kuhn2006} proved that every graph with sufficiently large girth contains a subdivision of a clique with order linear in its minimum degree. They \cite{2004kd} also found a $\mathrm{T}K_{d/ \log^{12} d}$ in $C_4$-free graphs with average degree $d$. Balogh, Liu and Sharifzadeh \cite{2015c6} proved that each $C_{2k}$-free graph with average degree $d$ contains a $\mathrm{T}K_{\Omega(d)}$ for $k\ge 3$. Using new constructions of clique subdivisions, Mader's conjecture was finally settled by Liu and Montgomery \cite{2017liu}. In fact, they considered clique subdivisions in $K_{s,t}$-free graphs and  proved the following more general result. 

\begin{theorem}[\cite{2017liu}]\label{Kst-free-subdivision}
For all integers $t\ge s\ge 2$, there exists $c = c(s, t)>0$ so that the following holds for every $d > 0$. Every $K_{s, t}$-free graph $G$ with average degree $d$ contains a  $\mathrm{T}K_{cd^{s/2(s-1)}}$.
\end{theorem}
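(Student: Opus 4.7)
The plan is to combine the Komlós--Szemerédi sublinear expander framework with the neighborhood bounds that $K_{s,t}$-freeness forces through a Kővári--Sós--Turán-type count.

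First, I would invoke the Komlós--Szemerédi theorem to pass to a subgraph $H\subseteq G$ that is a sublinear $(\varepsilon,k_0)$-expander, still $K_{s,t}$-free, and has minimum degree $\delta\ge d/C_0$. The payoff is the standard expansion fact that in $H$ any two vertex sets of size at least $\mathrm{polylog}(|H|)$ can be joined by a path of length $O(\log^{O(1)}|H|)$, and that many such short paths can be routed internally disjointly as long as the sets used are far larger than the polylogarithmic cost of each path.

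Second, I would quantify how the $K_{s,t}$-free condition forces balls to grow. If $A\subseteq V(H)$ has size $a$, then by Kővári--Sós--Turán the number of vertices with at least $r$ neighbors in $A$ is at most $O_{s,t}\bigl((a/r)^{s}\bigr)$. Applied to $A=B_{i}(v)$ with $r=a^{1/s}$ chosen to balance against the $\delta$-edge count $e(A,V)\ge a\delta/2$, this yields the growth relation $|B_{i+1}(v)|\ge c_{s,t}\,|B_{i}(v)|^{1/(s-1)}\delta^{s/(s-1)}/\cdots$, and iterating $O_{s}(1)$ times gives a ball $U_v:=B_{r}(v)$ of size at least $c_{s,t}\,\delta^{s/(s-1)}$ around every vertex. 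This is the step where the exponent $s/(2(s-1))$ will come from: setting $k:=c_{1}d^{s/(2(s-1))}$, each such hub $U_v$ has size $\gg k$, which is exactly the capacity needed to host $k-1$ path-endpoints per hub later.

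Third, I would choose the branch vertices. Greedily pick $k$ vertices $v_{1},\dots,v_{k}$ and hubs $U_{i}\subseteq B_{r}(v_{i})$ of size $\Omega(\delta^{s/(s-1)})$ that are pairwise disjoint. Such a selection is possible by a standard greedy/deletion argument once one observes that removing a ball around an already-chosen vertex only removes a small fraction of the remaining vertex set; if the process gets stuck before reaching $k$ vertices, one already finds a dense subgraph where a direct argument applies.

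Fourth, I would wire the hubs together. For each unordered pair $\{i,j\}$ I build internally disjoint paths $P_{ij}$ from $U_{i}$ to $U_{j}$, iteratively. At the step where $P_{ij}$ is constructed, we set aside $\Theta(k)$ endpoint-slots in each of $U_{i}$ and $U_{j}$; the expansion lemma inside $H$ (applied with the already-used vertices together with the forbidden hubs $U_{\ell}$, $\ell\neq i,j$, as a deletion set) produces such a path of polylogarithmic length. The total vertices consumed over all $\binom{k}{2}$ pairs is $\widetilde{O}(k^{2})=\widetilde{O}(d^{s/(s-1)})$, which is of the same order as one hub; one checks that this is still only an $o(1)$-fraction of $|V(H)|$, and in particular each hub $U_{i}$ retains $\Omega(\delta^{s/(s-1)})$ vertices throughout. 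Finally I glue each branch vertex $v_{i}$ to its $k-1$ path-endpoints in $U_{i}$ by a spider built inside $B_{r}(v_{i})$, which is possible because the ball $B_{r}(v_{i})$ offers $\Omega(\delta^{s/(s-1)})\gg k$ disjoint short paths from $v_{i}$.

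The main obstacle is the simultaneous-routing bookkeeping in the fourth step: one must produce $\binom{k}{2}$ internally disjoint paths while keeping the hubs useable. The cleanest way to handle this is to encapsulate the routing in a single lemma of the form \emph{given $k$ disjoint hubs of size $\gg k\cdot\mathrm{polylog}$ in a sublinear expander, a balanced clique-subdivision on $k$ branch vertices can be embedded}, proved by induction on the number of remaining pairs with a potential function that tracks the free capacity in each hub. Everything else is a careful application of expander connectivity together with the $K_{s,t}$-free ball-growth estimate.
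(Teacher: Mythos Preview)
This theorem is quoted from \cite{2017liu}; the paper itself proves the strictly stronger balanced version (Theorem~\ref{main-thm}), so the comparison is with the Liu--Montgomery argument, whose machinery the paper recycles in Section~\ref{pf-main-lemmas}.

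Your four-step plan is the right skeleton, but Step~3 has a genuine gap. You ask for $k=c\eta$ pairwise \emph{disjoint} hubs each of size $\Omega(\eta^{2})$, hence $\Omega(\eta^{3})$ vertices in total; yet by Theorem~\ref{turan-kst} the expander $H$ may have only $\Theta(\eta^{2})$ vertices, so in the dense regime there is no room. Your one-line escape (``a dense subgraph where a direct argument applies'') is exactly the missing half of the proof, and it is not routine. Both \cite{2017liu} and Lemma~\ref{liu} here resolve it by a dichotomy on the set $Z$ of vertices of degree far above $d$: if $|Z|$ is large one takes branch vertices from $Z$ and connects them through their second neighbourhoods, which Corollary~\ref{coro2.6} forces to be spread out; if $|Z|$ is small one deletes $Z$ and shows the remainder is large enough for the hub approach. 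Even in that remainder the hubs are not made pairwise disjoint: one uses the unit structure (Subsection~\ref{sub:definitions}) whose small \emph{interiors} are disjoint while the large exteriors may overlap, so that $k$ units cost $O(\eta^{2}\,\mathrm{polylog})$ rather than $\Omega(\eta^{3})$. A smaller issue is your spider in Step~4: a large ball around $v_{i}$ does not by itself give $k-1$ internally disjoint short paths from $v_{i}$ to prescribed boundary points; this is precisely why the layered hub of Claim~\ref{claim-hub} is constructed.
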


\subsection{Balanced subdivision}

A natural extension, proposed by Thomassen \cite{1984thom, 1985thom1, 1985thom2}, is to find balanced clique subdivisions in graphs under average degree condition. For a graph $H$, an \emph{$\ell$-balanced subdivision} of $H$, denoted by $\mathrm{T} H^{(\ell)}$, is a graph obtained from $H$ by replacing each of its edges with internally vertex-disjoint paths of length exactly $\ell$. A graph  has a \emph{balanced  $\mathrm{T} H$} if it has a $\mathrm{T} H^{(\ell)}$ for some $\ell\ge 1$. Motivated by Mader's theorem, Thomassen \cite{1985thom1} conjectured that there exists a $d:=d(k)$ such that every graph with average degree at least $d$ contains a balanced $\mathrm{T} K_k$. The conjecture was confirmed by Liu and Montgomery \cite{2023liu}. Naturally, the next step is to better estimate $d(k)$. Let $G$ be an  $n$-vertex graph with average degree $d$.  Wang \cite{2023wang} proved that $G$ contains a balanced  $\mathrm{T} K_k$, where $k =d^{1/2-o(1)}$. Simultaneously, Luan, Tang, Wang and Yang \cite{2023yang}, independently Fern\'{a}ndez, Hyde, Liu, Pikhurko and Wu \cite{2023wu} proved that $G$ contains a balanced $\mathrm{T}K_{\Omega(\sqrt{d})}$.  For $C_4$-free graphs, the authors in \cite{2023yang} further obtained the following linear bound. 

\begin{theorem}[\cite{2023yang}]\label{C4-free-balanced}
Every $C_4$-free graph with average degree $d$ contains a balanced  $\mathrm{T} K_{\Omega( d)}$.
\end{theorem}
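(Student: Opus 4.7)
The plan is to combine the Koml\'{o}s--Szemer\'{e}di sublinear expansion framework with the structural rigidity afforded by $C_4$-freeness, then enforce equal path lengths via an ``adjuster'' technique. As a first step I would pass, via the standard Koml\'{o}s--Szemer\'{e}di pruning lemma, to a subgraph $H\subseteq G$ that is a robust sublinear expander with $\delta(H) = \Omega(d)$. Since $H$ inherits $C_4$-freeness, the K\H{o}v\'{a}ri--S\'{o}s--Tur\'{a}n bound gives $|V(H)| = \Omega(d^{2})$, and any two vertices of $H$ share at most one common neighbor. Together these facts make neighborhoods quasi-disjoint and let one grow internally-disjoint, rapidly-expanding trees from many starting vertices.

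Next I would extract $k = \Omega(d)$ candidate branch vertices $x_1,\ldots,x_k$ together with pairwise disjoint ``wings'' $W_i \ni x_i$, each $W_i$ being a small expanding tree of depth $O(1)$ whose leaf set has size $\Omega(d)$ in $H$. A greedy selection, combined with the $C_4$-free constraint that any two vertices have at most one common neighbor, ensures such wings can be built to be vertex-disjoint while preserving their expansion properties, so each wing supplies a fresh private gateway for path-building.

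The heart of the argument is to construct, for each pair $(i,j)$, an $x_i$--$x_j$ path of a common prescribed length $\ell$, internally disjoint from all other wings and from previously built paths. Sublinear expansion provides short paths between leaves of $W_i$ and $W_j$ avoiding any sublinear forbidden set, but their lengths generally differ. To homogenize lengths, I would pre-construct a pool of \emph{adjusters}---gadgets carrying two endpoints through which one can route an $s$--$t$ path of every integer length in some interval $[L,L+t]$---inside $H$, using expansion to grow them and $C_4$-freeness to keep them compact and robust to deletion of a sublinear set. Each raw path is then routed through a fresh adjuster and stretched to the common target length $\ell$, with the construction proceeding pair by pair and used vertices added to the forbidden set at each step.

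The main obstacle lies in the accounting in the tightest regime $|V(H)| = \Theta(d^{2})$, where $\binom{k}{2} = \Theta(d^{2})$ paths must be packed inside $O(d^{2})$ vertices, leaving essentially a constant vertex budget per path. This forces $\ell = O(1)$, which in turn requires exploiting the fact that $C_4$-free graphs of near-extremal density (incidence-type graphs) have diameter $O(1)$, so that short balanced connections are available in abundance and adjusters can also be kept of bounded size. Carefully balancing adjuster size, target length, and the residual expansion of $H$ as vertices are consumed is the most delicate part of the argument, and the step where the $C_4$-free hypothesis is used most crucially.
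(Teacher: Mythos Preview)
This theorem is quoted from \cite{2023yang}; the present paper does not prove it separately but subsumes it in Theorem~\ref{main-thm} (take $s=t=2$, so $d^{s/2(s-1)}=d$). I compare your plan against both the description of \cite{2023yang} given in the paper's overview of ideas and the paper's own argument for the generalisation.

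Your overall architecture---pass to a sublinear expander $H$, anchor $\Omega(d)$ branch vertices with disjoint expanding ``wings'', then equalise path lengths via adjusters---is correct and matches both proofs in the sparse regime $|H|\gg d^{2}$. You also correctly identify the dense regime $|H|=\Theta(d^{2})$ as the crux.

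The gap is your proposed resolution of that regime. You plan to force $\ell=O(1)$ and pack $\binom{\Omega(d)}{2}=\Theta(d^{2})$ bounded-length paths into $\Theta(d^{2})$ vertices while maintaining sublinear expansion after each deletion. But this is exactly where the sublinear-expansion machinery collapses: once the forbidden set occupies a constant fraction of $V(H)$, Lemma~\ref{diamter} no longer produces connections, and you have no fallback. Neither existing proof attempts to push through this. As the paper itself remarks, \cite{2023yang} abandons expansion in the dense case and instead applies dependent random choice to locate $\Omega(d)$ vertices any two of which share many common neighbours, yielding a $\mathrm{T}K_{\Omega(d)}^{(2)}$ directly with no path-packing at all. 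The present paper takes a third route (proof of Lemma~\ref{liu}): set $Z=\{v:d(v)\ge\Delta\}$ for a suitably large threshold $\Delta$; if $|Z|\ge d/16$, use vertices of $Z$ as branch vertices---their oversized neighbourhoods keep the cumulative forbidden set a genuinely small fraction of $n$ throughout the greedy connection process; if $|Z|<d/16$, deleting $Z$ forces $|H|\ge K d^{2}$, returning to the sparse regime. Your sketch contains neither a dependent-random-choice step nor a high/low-degree dichotomy, so as written it stalls precisely at the point you yourself flag as ``the most delicate part''.
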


Our first result concerns the existence of balanced clique subdivisions in $K_{s,t}$-free graphs, generalizing Theorem \ref{Kst-free-subdivision} to the balanced setting. Moreover, compared to Theorem \ref{C4-free-balanced}, we deal with the $K_{s,t}$-free condition for general $t\geq s\geq 2$, which requires additional techniques and careful analysis. 

\begin{theorem}\label{main-thm}
For all integers $t\ge s\ge 2$, there exists a constant $c = c(s, t)$ such that the following holds for every $d > 0$. Every $K_{s, t}$-free graph $G$ with average degree $d$ contains a balanced $\mathrm{T}K_{cd^{\frac{s}{2(s-1)}}}$.
\end{theorem}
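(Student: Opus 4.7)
The plan is to combine the Komlós--Szemerédi sublinear expansion framework used in Theorem \ref{Kst-free-subdivision} with an adjuster-based balancing scheme in the spirit of Theorem \ref{C4-free-balanced}. As an initial reduction I would pass from $G$ to a sublinear $(\varepsilon,\varepsilon_{0})$-expander subgraph $H\subseteq G$ of average degree $\Omega(d)$ that inherits $K_{s,t}$-freeness; this comes equipped with the standard connecting lemma that any two polynomial-sized sets can be joined by a short path avoiding any sufficiently small forbidden set.

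Working inside $H$, set $k:=\lfloor c\, d^{s/(2(s-1))}\rfloor$. Following the blueprint of Theorem \ref{Kst-free-subdivision}, I would construct $k$ branch candidates $v_{1},\dots,v_{k}$, each carrying a small ``unit'' $B_{i}\ni v_{i}$ obtained from a truncated BFS exploration; the $K_{s,t}$-free hypothesis controls the density of iterated common neighborhoods via a Zarankiewicz-type estimate, which is precisely what pins the exponent at $s/(2(s-1))$. At this stage one already recovers (a reproof of) the unbalanced $\mathrm{T}K_{k}$, in which the $\binom{k}{2}$ connecting paths all have length $O(\log^{C} n)$ but not a common value.

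The new ingredient is to build, inside the expansion around each $v_{i}$ and well-separated from the other units, an \emph{adjuster} system: a small subgraph providing two internally disjoint $v_{i}$-to-$v_{i}$ walks whose lengths differ by exactly $1$, so that iterating across a few such gadgets lets us inflate any path ending at $v_{i}$ by any prescribed integer within a controlled range. Producing these adjusters in a $K_{s,t}$-free expander reduces to finding, in a polylogarithmic-radius ball around $v_{i}$, two short cycles of different parities sharing a vertex; this is a local statement where the $K_{s,t}$-free bound is non-binding, and it should follow from the usual expander short-cycle machinery combined with a parity-breaking step. With the adjusters in place, I would greedily select the $\binom{k}{2}$ connecting paths between units using expander sprinkling to keep forbidden sets sublinear, let $\ell$ be the maximum path length plus a slack dominating the total deficit, and finally route each path through the appropriate number of adjusters at its endpoints to inflate every path to exactly length $\ell$.

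The main obstacle will be the vertex-budget accounting: the units $B_{i}$ must be kept polynomially small because the $K_{s,t}$-free hypothesis, unlike in Theorem \ref{C4-free-balanced} where $s=t=2$, permits only $k=d^{s/(2(s-1))}$ branch vertices while still demanding $\binom{k}{2}$ mutually internally disjoint paths, each potentially passing through several adjusters. Every sprinkling estimate --- expander connectivity, unit separation, adjuster placement, path routing --- has to be recalibrated so that the pieces remain disjoint throughout, and the slack parameter has to be chosen uniformly large enough to absorb all length discrepancies while still fitting within the expander's absorption capacity. A secondary technical point is extending the short parity-shifting cycle lemma from the $C_{4}$-free setting to all $K_{s,t}$-free expanders uniformly in $t$, which I expect to follow from the same expansion argument but requires care when $s\ge 3$.
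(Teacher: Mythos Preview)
Your high-level outline --- pass to a sublinear expander, build units around branch candidates, connect pairs via short paths, and use adjusters to equalise lengths --- is broadly the same architecture as the paper. But there is a concrete gap in your adjuster mechanism. You ask for gadgets that shift path length by exactly~$1$, to be obtained from ``two short cycles of different parities sharing a vertex.'' In the standard reduction (which you invoke, and which the paper uses via Corollary~\ref{coro}) one first passes to a \emph{bipartite} expander; there every cycle is even, so no parity-$1$ gadget exists, and your ``parity-breaking step'' simply cannot be carried out. The adjusters of Definition~\ref{defad} only shift lengths by~$2$. The paper resolves parity not by building odd gadgets but by placing all branch vertices (or unit cores) in the same part of the bipartition, so that every connecting path automatically has the same parity and even adjusters suffice. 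Your proposal is missing this idea and would stall at exactly the ``secondary technical point'' you flag.

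There is also a structural omission. The paper does not run a single unified argument; it splits into three regimes governed by the relation between $d$ and $|H|$. When $|H|$ is small relative to $\eta^{2}=d^{s/(s-1)}$ (Lemma~\ref{liu}), there is not enough room for the unit machinery and one instead exploits vertices of very high degree directly as branch vertices, with a different vertex-budget analysis based on Corollary~\ref{coro2.6}. When $|H|\ge K\eta^{2}$ and $d\ge\log^{200}|H|$ (Lemma~\ref{yang}), the unit-plus-adjuster scheme is used, but with one adjuster built fresh for each connecting pair rather than a battery of adjusters attached to each branch vertex; this matters because each branch vertex is the endpoint of $\Theta(\eta)$ paths, and your per-vertex placement makes the disjointness bookkeeping harder than necessary. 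Finally, when $d<\log^{200}|H|$ the paper invokes Wang's Lemma~\ref{le3} as a black box rather than rerunning the construction. Your sketch treats all regimes uniformly and does not explain how the vertex budget closes in the dense case, which is precisely where the exponent $s/(2(s-1))$ is tight.
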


\subsection{Cycles  with consecutive even lengths}

An interesting direction concerning cycles in graphs is to study the distribution of cycle lengths. For a graph $G$, let $\mathcal{C}(G)$ be the set of cycle lengths. A classical  conjecture in this field, posed by Erd\H{o}s and Hajnal \cite{erd1966},  states that 
\begin{equation}\label{Erdos-hajnal-conjeture}
\sum_{\ell\in \mathcal{C}(G)}\frac{1}{\ell}\rightarrow\infty \text{~ ~  as~~~  } \chi(G)\rightarrow\infty, 
\end{equation}
where $\chi(G)$ denotes the chromatic number of a graph $G$. As noted by Erd\H{o}s, they felt  that \eqref{Erdos-hajnal-conjeture} holds under the weaker condition that the average degree of $G$ tends to infinity.  Let $G$ be a graph with average degree $d$. Confirming this stronger conjecture,  Gy\'{a}rf\'{a}s, Koml\'{o}s and Szemer\'{e}di \cite{g1984} proved  that $\sum_{\ell\in C(G)}1/\ell=\Omega_d(\log d)$. Considering cycles in  complete balanced
bipartite graphs,  Erd\H{o}s \cite{erd1975} have previously stated that the correct asymptotic lower bound was likely  $(1/2 +o_d(1)) \log d$. Recently, the conjecture was confirmed by Liu and Montgomery \cite{2023liu}.

To prove Erd\H{o}s conjecture, Liu and Montgomery \cite{2023liu} gave a stronger result by studying cycles with consecutive even lengths under the average degree condition. This is another interesting  direction in the study of the distribution of cycle lengths. Bondy and Vince \cite{Bondy1998} proved that every graph with minimum degree at least $3$ contains two cycles whose lengths differ by one or two. It was extended to graphs with large minimum degree by Fan \cite{fan2002}, who showed  that every graph with minimum degree at least $3k-2$ contains $k$ cycles with consecutive even lengths or consecutive odd lengths. Under the average degree condition,  Verstra\"{e}te \cite{ver2000}  showed that every  graph with  average degree at least $8k$ and even girth $g$ has at least $(g/2-1)k$ cycles with
consecutive even lengths.  In 2008, Sudakov and Verstra\"{e}te \cite{sud} proved  that if a graph $G$ has average degree $192(k+1)$ and girth $g$, then $G$ has $k^{\lfloor\frac{g-1}{2}\rfloor}$ cycles with consecutive even lengths. Finding cycles with consecutive odd lengths seems difficult. A breakthrough was given by Ma \cite{ma2016}, who showed that there exists an absolute constant $c>0$ such that for every natural number $k$, every non-bipartite 2-connected graph $G$ with average degree at least $ck$ and girth $g$ contains at least $k^{\lfloor\frac{g-1}{2}\rfloor}$ cycles with consecutive odd lengths. For  recent results in this field, we refer the interested readers to \cite{gjmj2020, jtmj2023, lf2021, gjhq2022, gmj2021, yy2025}

To the sum of the reciprocals of the distinct cycle lengths, Liu and Montgomery \cite{2023liu} have not only found a long interval of consecutive even numbers in $\mathcal{C}(G)$, but also  relatively controlled the upper and lower bound of the interval. 

\begin{theorem}[\cite{2023liu}]
There is $d_0 > 0$ such that the following holds. If $G$ is a graph with average degree $d \ge d_0$, then there is some $\ell\ge \frac{d}{10 \log^{12} d}$ such that $\mathcal{C}(G)$ contains every even integer in $[\log^8\ell, \ell]$.
\end{theorem}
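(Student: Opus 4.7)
The plan is to work within the \emph{sublinear expander} framework of Koml\'{o}s--Szemer\'{e}di and Liu--Montgomery. First, I would pass from $G$ to a subgraph $H$ that is a sublinear expander with average degree $\Theta(d)$: every vertex subset $U\subseteq V(H)$ of moderate size satisfies $|N_H(U)\setminus U|\ge |U|/(\log |U|)^{O(1)}$, and this expansion is robust under deletion of any polylogarithmic-sized vertex set. Such an $H$ exists by a standard sparsification argument and is the workhorse of the whole proof; in particular it yields short connecting paths between any two prescribed vertices while avoiding any small forbidden set.

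The next ingredient is the existence of a long cycle. Iteratively extending a path in $H$ via expansion and short connecting paths, I would build a cycle $C_0\subseteq H$ of length $\ell\ge d/(10\log^{12}d)$; this is essentially a long-cycle theorem for sublinear expanders and it fixes the parameter $\ell$ in the statement. The target is then to realize every even length in $[\log^8\ell,\ell]$ as a cycle somewhere in $G$.

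The main technical device is an \emph{adjuster}: a small subgraph with two attachment vertices $u,v$ such that, for every even integer $m$ in a prescribed interval, there is an internal $u$-$v$ path of length exactly $m$. Adjusters can be assembled by taking two short $u$-$v$ paths of slightly different lengths (both guaranteed by expansion) and locally modifying segments in $\pm 2$ increments; chaining adjusters at dyadically increasing scales yields a single gadget that covers all even lengths in the desired interval. Splicing such a gadget into $C_0$---removing a short arc and replacing it with each of the internal $u$-$v$ paths in turn---converts the list of realizable path lengths into a list of distinct cycles of $G$ whose lengths exhaust every even integer in $[\log^8\ell,\ell]$.

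The principal obstacle I foresee is the vertex-disjointness bookkeeping: the expander $H$, the long cycle $C_0$, the adjusters at various scales, and the connecting paths feeding them all into $C_0$ must coexist without overlap. This is handled by constructing the pieces sequentially and invoking the robustness of sublinear expansion under the removal of polylogarithmic-sized vertex sets, so that at each stage the remaining graph still has enough expansion for the next construction step. Getting the constants right---so that the cycle has length at least $d/(10\log^{12}d)$ and the adjuster reaches all the way down to $\log^8\ell$---is the delicate quantitative part of this bookkeeping.
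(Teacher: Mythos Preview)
This theorem is quoted from Liu--Montgomery \cite{2023liu} and is not proved in the present paper; it appears only as background motivating Theorem~\ref{main-thm-2}. There is therefore no proof here to compare against directly, though the paper's own Theorem~\ref{main-thm-2} and Theorem~\ref{liusparse} (together with Lemma~\ref{5.3-path} in the appendix) follow the Liu--Montgomery template closely and can serve as a proxy.

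Your sketch assembles the right ingredients---pass to a sublinear expander, build adjusters, exploit robustness under polylogarithmic deletions---but the architecture has a genuine gap. You propose to first construct a long cycle $C_0$ of length $\ell$ and then ``remove a short arc'' of $C_0$ between vertices $u,v$, replacing it by the various internal $(u,v)$-paths of an adjuster. If the removed arc has length $a$ and the adjuster supplies paths of lengths in $[p_1,p_2]$, the resulting cycles have lengths in $[\ell-a+p_1,\,\ell-a+p_2]$. With $a$ short these are all within a polylogarithmic window of $\ell$; to reach lengths as small as $\log^8\ell$ you would need $a\approx\ell$, i.e.\ you would discard essentially all of $C_0$. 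The long cycle therefore does no work, and as written the scheme cannot cover the full interval.

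The construction actually used (visible in the proofs of Lemma~\ref{5.3-path} and of the middle case of Theorem~\ref{main-thm-2}) inverts the order. One fixes two vertices $v_1,v_2$---endpoints of an edge, or core vertices of two units joined by a short path $P_0$---equipped with large disjoint expansions. Then, for \emph{each} target length $\ell'$ in the interval separately, one builds a $(D,m,O(m))$-adjuster disjoint from these expansions and connects $v_1,v_2$ to the adjuster's two core vertices by paths whose combined length is within $O(m)$ of $\ell'-\ell(P_0)$; the adjuster absorbs the remaining parity-correct slack exactly. The parameter $\ell$ in the theorem is the upper end of the range over which this per-length construction succeeds (governed by $|H|$ and the polylog factors), not the length of any pre-built cycle.
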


Our second result is to construct even cycles whilst controlling their length in $K_{s,t}$-free graphs, which, thanks to a seminar result of Koml\'{o}s--Szemer\'{e}di, is reformulated  and discussed in expanders \footnote{The definition of expander is deferred to Subsection~\ref{subsec:expander}} as follows.

\begin{theorem}\label{main-thm-2}
Suppose $t\ge s\ge 2$ are integers and $0< \varepsilon_2<1/5$, there exist $d_0, \varepsilon_1, \varepsilon>0$ such that the following holds. Let $G$ be an $n$-vertex $K_{s, t}$-free bipartite $(\varepsilon_1, \varepsilon_2d^{\frac{s}{s-1}})$-expander with average degree $d\ge d_0$. Then $\mathcal{C}(G)$ contains every even integer in 
\begin{enumerate}
\item[$(1)$] $\left[\frac{4}{\varepsilon_1}\log^3\left(\frac{15n}{\varepsilon_2 d^{\frac{s}{s-1}}}\right),  \frac{d^{\frac{s}{s-1}}}{(288t^{1/s})^{s/(s-1)}}\right]$  if $d>\varepsilon n^{\frac{s-1}{s}}$,
\item[$(2)$] $\left[300\cdot\log^8\frac{n}{d^{s/(s-1)}}, \frac{d^{s/(s-1)}}{100}\cdot\log^{12}\frac{n}{d^{s/(s-1)}}\right]$ if $\log^{200} n\le d\le\varepsilon n^{\frac{s-1}{s}}$,
\item[$(3)$] $\left[\log^7n, \frac{n}{\log^{12} n} \right]$ if $d< \log^{200} n$.
\end{enumerate}
\end{theorem}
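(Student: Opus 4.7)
The plan is to follow the adjuster-based framework developed by Liu and Montgomery for producing consecutive even cycle lengths in sublinear expanders, specializing each building block to exploit the $K_{s,t}$-free hypothesis via Theorem~\ref{main-thm}. The central object is an \emph{adjuster}: a subgraph $F\subseteq G$ with two designated endpoints $u,v$ and a collection of disjoint ``units'', each of which can be locally rerouted in two ways whose lengths differ by a fixed even amount. Such an $F$ admits a $u$--$v$ path of length $\ell_0+2i$ for every $i$ in a long arithmetic progression, and concatenating $F$ with any internally disjoint short $v$--$u$ path supplied by the expansion produces a cycle of each of these even lengths.

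To build the units in each regime, I would first pass, via the Koml\'os--Szemer\'edi subgraph lemma and the expansion property, to a $K_{s,t}$-free subgraph $G'$ of small diameter whose average degree is a controlled polynomial of $d$ (unchanged in case~(1), boosted by polylogarithmic factors in cases~(2) and (3)), and then apply Theorem~\ref{main-thm} inside $G'$ to obtain a balanced clique subdivision $\mathrm{T}K_r^{(\ell^*)}$ with $r=\Omega(d^{s/(2(s-1))})$. The balanced structure immediately yields two families of routes between any pair of branch vertices -- a direct subdivided edge of length $\ell^*$ and a two-hop detour of length $2\ell^*$ through a third branch -- so swapping independently across $\Theta(r)$ disjoint such units produces an adjuster whose $u$--$v$ length shifts in steps of $\ell^*$ over a total range of order $r\,\ell^*$. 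The parameters in Theorem~\ref{main-thm} are tuned so that $r\,\ell^*$ matches $d^{s/(s-1)}$ up to the polylog factor appearing in each case.

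To reduce the step size from $\ell^*$ down to $2$, so as to hit \emph{every} even integer in the interval, I would attach a second, finer adjuster built from additional disjoint units of length $2$ drawn from a separate part of $G$. Closing the combined adjuster into a cycle of exact target length is then done by the standard Koml\'os--Szemer\'edi connection lemma: the $(\varepsilon_1,\varepsilon_2 d^{s/(s-1)})$-expansion guarantees, for any two vertex sets of size $\ge\varepsilon_2 d^{s/(s-1)}$, an internally disjoint $u$--$v$ path of length $O(\varepsilon_1^{-1}\log^2 n)$ avoiding any prescribed small forbidden set. The three cases of the theorem correspond to three regimes of how many times this Koml\'os--Szemer\'edi passage to a denser subgraph must be iterated before Theorem~\ref{main-thm} is applied: case~(1), where $d^{s/(s-1)}$ is linear in $n$, needs none and yields the clean upper endpoint $d^{s/(s-1)}/(288t^{1/s})^{s/(s-1)}$ (the $t^{1/s}$ factor bookkeeping the constant from Theorem~\ref{main-thm}); case~(2) uses one iteration and accumulates the factor $\log^{12}(n/d^{s/(s-1)})$; case~(3), the sparse regime, is essentially the original Liu--Montgomery argument with the $K_{s,t}$-free improvement absorbed into lower-order terms.

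The principal obstacle is matching the \emph{lower} endpoints. To realise cycles as short as $\log^3(n/d^{s/(s-1)})$, $\log^8(n/d^{s/(s-1)})$, or $\log^7 n$, the entire adjuster together with its closing path must live inside a sublinear-diameter subgraph of $G$, which forces a careful iteration of the robust-expansion and ball-doubling arguments while verifying that the $K_{s,t}$-free property and the balanced structure of the subdivision are preserved at every step. It is precisely here that the balanced improvement of Theorem~\ref{main-thm} over Theorem~\ref{Kst-free-subdivision} is essential: with unbalanced subdivisions the step size $\ell^*$ varies across units, the second finer adjuster would no longer realise all even residues, and the fine adjustment to every even length in the target interval would fail.
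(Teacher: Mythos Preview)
Your proposal rests on a misreading of the paper's architecture: Theorem~\ref{main-thm-2} is \emph{not} deduced from Theorem~\ref{main-thm}, and the balanced subdivision plays no role in its proof. The three regimes are handled by three independent tools. In case~(1) there are no adjusters at all; the paper applies the Sudakov--Verstra\"ete machinery (Lemmas~\ref{densecase1} and~\ref{densecase2}) directly, and the constant $(288t^{1/s})^{s/(s-1)}$ is exactly the output of Lemma~\ref{densecase2} combined with the K\H{o}v\'ari--S\'os--Tur\'an bound, not ``bookkeeping from Theorem~\ref{main-thm}''. In case~(3) the paper invokes Theorem~\ref{liusparse}, a direct adaptation of Liu--Montgomery, again without subdivisions. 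Only case~(2) uses adjusters, and there they are built not from a $\mathrm{T}K_r^{(\ell^*)}$ but from short even cycles found via expansion (Lemma~\ref{adjuster2}), anchored to two \emph{units} (Lemma~\ref{unit}) that serve as robust terminals.

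Beyond the mismatch with the paper, your construction has a genuine gap. The ``direct edge versus two-hop detour'' swap inside a balanced $\mathrm{T}K_r^{(\ell^*)}$ produces an adjuster with step size $\ell^*$, and you then assert that a ``second, finer adjuster built from additional disjoint units of length~$2$'' will reduce the step to~$2$. But you have not said where these step-$2$ units come from, and this is exactly the hard part: in the paper, step-$2$ adjustability is obtained from a short even cycle with two near-antipodal vertices, and chaining enough of these together robustly (Lemma~\ref{adjuster2}) is the main technical work. Your range estimate is also off: with $r=\Theta(\eta)$ branch vertices the number of disjoint swap units you can extract is $O(r)$, giving total range $O(r\ell^*)=O(\eta\cdot\mathrm{polylog})$, not the $\eta^2\cdot\mathrm{polylog}$ required for the upper endpoint in case~(2). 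Finally, your closing sentence inverts the logical dependence: the balanced improvement in Theorem~\ref{main-thm} is not used here, so it cannot be ``essential'' for Theorem~\ref{main-thm-2}.
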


By combining Theorem \ref{main-thm-2} and a classical result of Koml\'{o}s and Szemer\'{e}di \cite{1996jk} asserting that every graph $G$ contains a sublinear expander almost as dense as $G$, we obtain the following extension of the result of Liu and Montgomery \cite{2017liu}.
\begin{corollary}\label{coro1}
Let $t\ge s\ge 2$ be integers. If $G$ is a $K_{s, t}$-free graph with average degree $d$, then
\[
\sum_{\ell\in\mathcal{C}(G)}\frac{1}{\ell}\ge\left(\frac{s}{2(s-1)}-o_d(1)\right)\log d.
\]
\end{corollary}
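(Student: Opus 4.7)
The plan is to reduce to Theorem~\ref{main-thm-2} via the standard expander-extraction machinery, then estimate the harmonic sum over the guaranteed even-cycle interval. I would first pass to a bipartite subgraph $G'\subseteq G$ with average degree at least $d/2$ (which remains $K_{s,t}$-free), and then invoke Corollary~\ref{coro} to obtain a bipartite sublinear expander subgraph $H\subseteq G'$ on $n'$ vertices with average degree $d'=\Omega(d)$ and satisfying the $(\varepsilon_1,\varepsilon_2(d')^{s/(s-1)})$-expansion property required by Theorem~\ref{main-thm-2}. Any cycle found in $H$ is also a cycle of $G$, so $\mathcal{C}(H)\subseteq\mathcal{C}(G)$, and it suffices to lower-bound $\sum_{\ell\in\mathcal{C}(H)}1/\ell$.

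Next, I would apply Theorem~\ref{main-thm-2} to $H$. In each of the three regimes this produces an interval $[L,R]$ of even integers contained in $\mathcal{C}(H)$. Using $\sum_{\text{even }\ell\in[L,R]} 1/\ell \;\ge\; \tfrac{1}{2}\bigl(\log(R/L)-o(1)\bigr)$, the task reduces to checking that in every regime
\[
\log\!\left(\frac{R}{L}\right)\;\ge\;\left(\frac{s}{s-1}-o_{d}(1)\right)\log d.
\]
In case $(1)$, the hypothesis $d'>\varepsilon (n')^{(s-1)/s}$ forces $\log n'=O(\log d)$, so $\log L=O(\log\log d)$ while $\log R=\tfrac{s}{s-1}\log d'-O(1)$, giving the bound immediately. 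In case $(2)$, the condition $d'\ge\log^{200}n'$ yields $\log\log n'\le \tfrac{1}{200}\log d'$, so the polylogarithmic factors in $L$ and $R$ contribute only $O(\log\log d)$, and again $\log(R/L)=\tfrac{s}{s-1}\log d'-o(\log d)$. In case $(3)$, where $d'<\log^{200}n'$, we have $\log n'\ge (\log d')/200$ amplified by a factor tending to infinity, so in fact $\log(R/L)/\log d\to\infty$, which is stronger than needed.

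Putting the three regime estimates together, together with $d'=\Omega(d)$ (so $\log d'=\log d - O(1)$), yields
\[
\sum_{\ell\in\mathcal{C}(G)}\frac{1}{\ell}\;\ge\;\sum_{\ell\in\mathcal{C}(H)}\frac{1}{\ell}\;\ge\;\frac{1}{2}\log\!\left(\frac{R}{L}\right)-o(1)\;\ge\;\left(\frac{s}{2(s-1)}-o_{d}(1)\right)\log d,
\]
which is the desired conclusion.

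The main obstacle I expect is bookkeeping in case $(2)$: the bounds in Theorem~\ref{main-thm-2} carry several interacting polylogarithmic terms in $n'/(d')^{s/(s-1)}$, and one must argue carefully that the constraint $d'\ge\log^{200}n'$ reduces all these $\log\log$ contributions to $o(\log d)$ rather than to a constant fraction of $\log d$. A secondary, more routine concern is to ensure that the expander extraction step of Corollary~\ref{coro} preserves both the bipartite structure and the average degree up to a constant factor depending only on $s,t$, so that the expansion parameter on $H$ matches the hypothesis of Theorem~\ref{main-thm-2} with the correct power $d^{s/(s-1)}$.
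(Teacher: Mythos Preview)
Your proposal is correct and matches the paper's own one-line derivation (the paper simply states ``Combining Theorem~\ref{main-thm-2} with Corollary~\ref{coro}'' and omits the details you have supplied). Your worry about case~(2) is in fact unnecessary: writing $m=\log\bigl(n'/(d')^{s/(s-1)}\bigr)$, one has $R/L=(d')^{s/(s-1)}m^{4}/30000$, so the polylogarithmic factors contribute \emph{positively} to $\log(R/L)$ and no delicate cancellation is required.
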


\subsection{Ideas and organization}
Both proofs of our results pass to a subgraph that is a sublinear expander to enjoy the expansion property. Although the linear expansion property has been studied extensively, Koml\'os and Szemer\'edi \cite{1994jk, 1996jk} introduced a sublinear expansion property, which forms the basis of our proofs.

To find  balanced clique subdivisions in general graphs, a key idea is to build a large subgraph called \emph{adjuster} by joining many small adjusters \footnote{The definition of adjuster is deferred to Section~\ref{pf-main-lemmas}.}. It can help us to find a large balanced clique subdivision in relatively  sparse expanders. It is also  the main idea used in \cite{2023yang, 2023wu}.  For dense expanders (e.g. $d=\Omega(\sqrt{n})$ for the $C_4$-free case),  the desired balanced clique subdivision can be found directly using dependent random choice (see \cite{2011fox} for details) in \cite{2023yang, 2023wu}. However, it is still not clear how to adapt their approach for the $K_{s,t}$-free graphs for general integers $s,t\ge 2$.  To overcome this issue, we use an idea from the work of Liu and Montgomery \cite{2017liu}. The key is to build tree-like structures that act as large degree vertices and to connect them using vertex-disjoint paths of the same length.

For cycles with consecutive even lengths in $K_{s,t}$-free graphs, different from \cite{2023liu}, we divide our proof into three cases according to the average degree $d$ of $G$. If $d\le\log^{200}n$ or $d\ge\varepsilon n^{\frac{s-1}{s}}$, then we adopt the methods in \cite{2023liu} and \cite{sud}, respectively. Otherwise, we use a special structure called \textit{unit} from \cite{2017liu}, and follow the strategy of Liu and Montgomery \cite{2023liu} by finding a short path (specifically, in \cite{2023liu}, they just choose an edge) with two endpoints centered with internally vertex-disjoint units. 

The remainder of the paper is organized as follows. In Section \ref{sec2}, we first introduce some
necessary notions and results. Then, we give a proof of Theorem \ref{main-thm} through two key lemmas (Lemmas \ref{liu} and  \ref{yang}), whose proofs can be found in Section \ref{pf-main-lemmas}. In Section \ref{sec6}, we prove Theorem \ref{main-thm-2}. The final section  offers some concluding remarks.



\section{Preliminaries}\label{sec2}
Let $G$ be a graph. We use $|G|$ and $e(G)$ to denote the number of vertices and edges of $G$, respectively. Let $d(G)=2e(G)/|G|$ be the \emph{average degree} of $G$. Given two distinct vertex sets $X$ and $Y$, let $E_G(X, Y)$ be the set of edges between $X$ and $Y$, and $e_G(X, Y):= |E(X, Y)|$. Denote by $G[X]$ the induced subgraph of $G$ on $X$, and write $G-X$ for the induced subgraph $G[V(G)\backslash X]$.  We define the \emph{external neighborhood} of
$X$ in $G$ to be 
$$N_G(X):=\left\{y\in V(G)\setminus X: \text{ there exists } x\in X \text{ such that } xy\in E(G)\right\}.$$
Let $P$ be a path with endvertices $x$ and $y$. We say $P$ is an \emph{$(x,y)$-path}. The \emph{length} of $P$, denoted by $\ell(P)$,  is  the number of edges in it. If $P$ is an $(x,y)$-path, and $P\cap(X\cup Y)=\{x,y\}$, where $x\in X, y\in Y$, then we say $P$ is a \emph{path from  $X$ to  $Y$}. We will drop the subscript when the confusion is unlikely. Usually, we write $[k]:=\{1,\ldots,k\}$. For convenience, as it is standard in the literature, we will usually pretend that large numbers are integers to avoid using essentially irrelevant floor and ceiling symbols. Throughout the paper, the symbol $e$ denotes the standard natural constant in mathematics and we write $\log$ for the natural logarithm.

\subsection{Koml\'{o}s--Szemer\'{e}di  expander}\label{subsec:expander}

We use the following notion of expander introduced by Koml\'{o}s and Szemer\'{e}di in \cite{1994jk, 1996jk}.

\begin{definition}\rm{(Sublinear expander)}.\label{subk} 
Let $\varepsilon_1 > 0$ and $k > 0$. A graph $G$ is an $(\varepsilon_1, k)$-\emph{expander} if $|N(X)| \ge \varepsilon(|X|, \varepsilon_1, k) \cdot |X|$ for all $X \subseteq V (G)$ with $k/2 \le |X| \le |G|/2$, where
$$
\varepsilon(x, \varepsilon_1, k): =
\left\{
    \begin{array}{lc}
        0&{\rm if}\ x<k/5,\\
        \frac{\varepsilon_1}{\log^2 (15x/k)}&{\rm if}\ x\ge k/5.\\
    \end{array}
\right.
$$
\end{definition}
Whenever the choices of $\varepsilon_1$, $k$ are clear, we omit them and write $\varepsilon(x)$ for $\varepsilon(x, \varepsilon_1, k)$. The key property of sublinear expanders is that one  can connect two large sets of vertices using a short path even after removing a smaller vertex set.

\begin{lemma}[\cite{1996jk}]\label{diamter}
Let $\varepsilon_1, k > 0$. If $G$ is an $n$-vertex $(\varepsilon_1, k)$-expander, then any two vertex sets, each of size at least $x \ge k$, are of distance at most $\frac{2}{\varepsilon_1}\log^3\left(\frac{15n}{k}\right)$ apart. This remains true even after deleting $x \cdot \varepsilon(x)/4$ arbitrary vertices from $G$.
\end{lemma}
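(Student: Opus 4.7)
The plan is to prove the distance bound by running two BFS ball-growing processes in parallel from the sets $A$ and $B$ inside $G-W$, where $W$ denotes the deleted set of size at most $x\,\varepsilon(x)/4$. The expansion property will make each ball grow geometrically until it occupies more than half the vertices of $G$, at which point the two balls must share a vertex.

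Concretely, set $B_0^A := A\setminus W$ and inductively $B_{i+1}^A := B_i^A \cup N_{G-W}(B_i^A)$; define $B_i^B$ analogously from $B$. As long as $|B_i^A|\le n/2$, Definition~\ref{subk} yields $|N_G(B_i^A)|\ge\varepsilon(|B_i^A|)\cdot|B_i^A|$. The crucial observation is that the function $y\mapsto y\,\varepsilon(y)=\varepsilon_1 y/\log^2(15y/k)$ is nondecreasing on $[k,\infty)$, a calculus check using $\log(15y/k)\ge\log 15>2$. Hence
\[
|W|\le \tfrac14 x\,\varepsilon(x)\le\tfrac14|B_i^A|\,\varepsilon(|B_i^A|),
\]
so at most a quarter of the external neighbourhood is lost to $W$, leaving
\[
|B_{i+1}^A|\ge\bigl(1+\tfrac34\varepsilon(|B_i^A|)\bigr)|B_i^A|.
\]

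Next, I would estimate the number of BFS layers in two stages: the number of iterations needed to double the ball from size $y$ to $2y$ is at most $O(\varepsilon(y)^{-1})=O(\varepsilon_1^{-1}\log^2(15y/k))$, and the number of doublings needed to push $|A|\ge x$ past $n/2$ is at most $\log_2(n/(2x))\le\log(15n/k)$. Summing $\log^2(15y_i/k)/\varepsilon_1$ across the geometric sequence $y_i=2^i x$ and absorbing absolute constants yields at most $\tfrac{1}{\varepsilon_1}\log^3(15n/k)$ BFS layers starting from $A$, and the same bound for $B$. Since two subsets of $V(G)$ with combined size exceeding $n$ must intersect, the two balls meet in a common vertex, providing an $(A,B)$-path in $G-W$ of length at most $\tfrac{2}{\varepsilon_1}\log^3(15n/k)$.

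The main technical obstacle I anticipate is the bookkeeping that absorbs both the loss to $W$ and the $y$-dependent expansion rate into a single clean $\log^3$ bound: one must verify the monotonicity of $y\,\varepsilon(y)$ so that the $\tfrac14$ budget suffices at every ball size $y\ge x$ encountered along the BFS, and then carry the $\tfrac34$ factor through the telescoping without inflating the final constant past the claimed $2/\varepsilon_1$. Beyond this calculation, the argument is a straightforward parallel BFS with bounded losses.
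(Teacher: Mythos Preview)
The paper does not supply its own proof of Lemma~\ref{diamter}; it is quoted verbatim from Koml\'{o}s and Szemer\'{e}di~\cite{1996jk} and used as a black box throughout. Your sketch is precisely the standard ball-growing argument from that source: expand BFS balls from each side, use the expander property to guarantee a multiplicative gain of $1+\Theta(\varepsilon(|B_i|))$ per layer after subtracting the loss to $W$, and sum the doubling times to obtain the $\log^3$ bound. So there is nothing in the paper to compare against, and your outline matches the original proof.

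One small wrinkle worth tightening: at step $i=0$ you set $B_0^A=A\setminus W$, which may have size strictly below $x$, so the monotonicity inequality $|W|\le\tfrac14|B_0^A|\,\varepsilon(|B_0^A|)$ does not follow directly from monotonicity on $[x,\infty)$. This is harmless---since $\varepsilon(x)<1$ one has $|B_0^A|\ge x(1-\varepsilon(x)/4)>k/2$, and $y\varepsilon(y)$ is in fact nondecreasing already on $[k/2,\infty)$ (your derivative check gives positivity once $\log(15y/k)>2$, which holds for $y\ge k/2$ since $\log(15/2)>2$)---but you should say so explicitly when you write it up.
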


In 1996, Koml\'{o}s and Szemer\'{e}di \cite{1996jk} showed that every graph $G$ with average degree $d$ contains an expander with average degree and minimum degree linear in  $d$.
\begin{theorem}[\cite{1996jk}]\label{subgraph-expander}
There exists $\varepsilon_1 > 0$ such that the following holds for every $k > 0$. Every graph $G$ has an $(\varepsilon_1, k)$-expander $H$ with $d(H) \ge d(G)/2$ and $\delta(H) \ge d(H)/2$.
\end{theorem}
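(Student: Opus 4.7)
The plan is to follow the iterative approach originally due to Komlós and Szemerédi, organized around a carefully chosen potential function. As a preliminary step, I would use standard greedy peeling: repeatedly delete any vertex of current degree less than $d(G)/4$ from $G$. A routine double-counting shows this process halts with a nonempty subgraph $H_0$ satisfying $d(H_0)\ge d(G)/2$ and $\delta(H_0)\ge d(H_0)/2$, giving the starting point of the iteration.

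The main step is to define a potential of the form $w(H) = d(H)\cdot\log^{-c}(15|H|/k)$ for a suitably tuned constant $c\ge 2$ (matched against the $\log^2$ in the definition of $\varepsilon(\cdot)$) and iteratively refine $H_0$. At each stage, if the current graph $H_i$ is an $(\varepsilon_1,k)$-expander, stop. Otherwise pick a witness $X\subseteq V(H_i)$ with $k/2\le |X|\le |H_i|/2$ and $|N_{H_i}(X)|<\varepsilon(|X|)|X|$, and compare the two candidate subgraphs $H_i[X\cup N_{H_i}(X)]$ and $H_i\setminus X$. The minimum-degree bound $\delta(H_i)\ge d(H_i)/2$ together with the fact that $|N_{H_i}(X)|$ is only a $1/\log^2(\cdot)$-fraction of $|X|$ forces a dichotomy: either most edges incident to $X$ stay inside $X\cup N_{H_i}(X)$, in which case this subgraph is much denser than $H_i$ on a substantially smaller vertex set; or the $X$-to-$N(X)$ cut carries few edges, in which case $H_i\setminus X$ retains essentially all of $e(H_i)$ on only slightly fewer vertices. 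Replace $H_i$ by the winning candidate and then peel low-degree vertices again to restore $\delta\ge d/2$ (this costs at most a constant factor in average degree and only drops vertices from the small ``boundary,'' so the expansion property transfers).

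Since the potential $w$ is bounded above in terms of $|G|$ and $d(G)$, and each iteration strictly increases $w$ by a multiplicative factor bounded away from $1$, the process must terminate in finitely many steps. The terminal graph $H$ is the desired $(\varepsilon_1,k)$-expander; the bound $d(H)\ge d(G)/2$ is inherited from the fact that $w$ never decreased together with the starting estimate $d(H_0)\ge d(G)/2$, while $\delta(H)\ge d(H)/2$ is enforced by the final peeling.

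The main obstacle will be calibrating $\varepsilon_1$ and $c$ so that the two branches of the dichotomy \emph{cannot both fail} to raise the potential. The asymmetry baked into $\varepsilon(x)=\varepsilon_1/\log^2(15x/k)$ is exactly what makes the tradeoff close: on the shrinking branch, $|X\cup N(X)|\le (1+\varepsilon(|X|))|X|$ is barely larger than $|X|$, so if this set absorbs a noticeable share of the edges, the density gain compensates for the drop in $\log$ factor; on the deletion branch, the smallness of the $X$-to-$N(X)$ cut (forced by the minimum-degree condition whenever the first branch fails) ensures that $H_i\setminus X$ inherits nearly all edges on nearly all vertices. The bookkeeping needed to verify that at least one branch yields a strict gain in $w$ is the technical heart of the Komlós--Szemerédi argument.
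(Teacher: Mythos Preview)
The paper does not prove this statement at all; it is quoted from Koml\'os and Szemer\'edi \cite{1996jk} and used as a black box throughout. So there is no in-paper proof to compare your proposal against.

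Your outline is a recognisable rendering of the original Koml\'os--Szemer\'edi iteration, and the dichotomy you describe (pass to $H_i[X\cup N(X)]$ or to $H_i\setminus X$) is the right one. One genuine gap, though: monotonicity of $w(H)=d(H)\cdot\log^{-c}(15|H|/k)$ does \emph{not} by itself deliver $d(H)\ge d(G)/2$. When $|H|$ shrinks, the factor $\log^{-c}(15|H|/k)$ grows, so $w(H)\ge w(H_0)$ is compatible with $d(H)$ dropping well below $d(H_0)/2$. In the actual argument the average-degree control is obtained separately: one shows that each cut step loses at most a multiplicative factor $1-O(\varepsilon(|X|)\,|X|/|H_i|)$ in average degree, and the choice of $\varepsilon(x)=\varepsilon_1/\log^2(15x/k)$ is made precisely so that the telescoping product of these losses along any chain of shrinking subgraphs stays above $1/2$ for $\varepsilon_1$ small (equivalently, $\int \varepsilon(x)/x\,dx$ is finite). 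Your potential is useful for termination, but you still need this multiplicative bookkeeping to recover the stated bound on $d(H)$.
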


We remark that $H$ might be much smaller than $G$ in Theorem \ref{subgraph-expander}. Note that any graph $G$ has a  bipartite subgraph $H$ with $d(H) \ge d(G)/2$ by considering the  bipartite subgraph with a maximum number of edges of $G$.  This together with Theorem \ref{subgraph-expander} yields the following corollary.

\begin{corollary}\label{coro}
There exists $\varepsilon_1 > 0$ such that the following holds for every $k > 0$ and $d \in \mathbb{N}$. Every graph $G$ with $d(G) \ge 8d$ has a bipartite $(\varepsilon_1, k)$-expander $H$ with $\delta(H) \ge d$.
\end{corollary}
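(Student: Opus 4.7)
The plan is to combine the two observations already flagged in the paragraph immediately preceding the corollary: every graph contains a bipartite subgraph retaining at least half of its edges, and by Theorem \ref{subgraph-expander} every graph contains a Koml\'os--Szemer\'edi expander subgraph whose average and minimum degrees are linear in the original average degree. Concatenating these two reductions, while keeping track of how the average degree halves at each step, is all that is needed.

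Concretely, I would proceed as follows. Given $G$ with $d(G)\ge 8d$, first pass to a bipartite subgraph $G'\subseteq G$ with the maximum number of edges. A standard double counting (or a uniformly random bipartition argument) gives $e(G')\ge e(G)/2$ and hence $d(G')\ge d(G)/2\ge 4d$. Next, apply Theorem \ref{subgraph-expander} to $G'$ with the given parameter $k$, obtaining an $(\varepsilon_1,k)$-expander $H\subseteq G'$ with $d(H)\ge d(G')/2\ge 2d$ and $\delta(H)\ge d(H)/2\ge d$, where $\varepsilon_1>0$ is the absolute constant supplied by that theorem. Since $H$ is a subgraph of the bipartite graph $G'$, it is automatically bipartite, so $H$ has all the required properties.

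There is no genuine obstacle: the corollary is essentially a cosmetic strengthening of Theorem \ref{subgraph-expander} arranged by preprocessing $G$ with a max-cut. The only point requiring attention is the budgeting of constants, namely that going from $G$ to $G'$ and then to $H$ costs a factor of $4$ in average degree and a further factor of $2$ from $d(H)$ to $\delta(H)$, which together account for the hypothesis $d(G)\ge 8d$.
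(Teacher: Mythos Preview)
Your proposal is correct and matches the paper's approach exactly: first pass to a bipartite subgraph with maximum number of edges to retain at least half the average degree, then apply Theorem~\ref{subgraph-expander} to extract an $(\varepsilon_1,k)$-expander with $d(H)\ge d(G')/2$ and $\delta(H)\ge d(H)/2$. The constant budgeting you describe (two halvings for average degree, one more to get to minimum degree) is precisely why the hypothesis is $d(G)\ge 8d$.
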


\subsection{Bipartite $K_{s,t}$-free graphs}
In this subsection, we collect some well-known results about $K_{s,t}$-free graphs.  
A basic problem in extremal graph theory is to determine the maximum possible number of edges in an $n$-vertex $K_{s,t}$-free graph. For a given graph $H$, we define $\text{ex}(n,H)$ to be the maximum number of edges in an $n$-vertex $H$-free graph. K\H{o}v\'{a}ri, S\'{o}s and Tur\'{a}n \cite{1954kova} gave the following theorem.
\begin{theorem}[\cite{1954kova}]\label{turan-kst}
For every integers $1\le s\le t$, $ex(n,K_{s,t})\le t^{1/s}n^{2-1/s}$.
\end{theorem}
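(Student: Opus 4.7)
The plan is to run the classical Kővári--Sós--Turán double-counting argument. Let $G$ be an $n$-vertex $K_{s,t}$-free graph with $e$ edges and average degree $d := 2e/n$. I will count in two ways the number of pairs $(S,v)$ with $S \in \binom{V(G)}{s}$ and $v \in V(G)$ such that $v$ is adjacent to every vertex of $S$.

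For the lower bound, fix $v$: the admissible sets $S$ are exactly the $s$-subsets of $N(v)$, of which there are $\binom{d(v)}{s}$. Summing over $v$ and applying Jensen's inequality to the convex function $x \mapsto \binom{x}{s}$ yields $\sum_v \binom{d(v)}{s} \ge n \binom{d}{s}$. For the upper bound, fix $S$: since $G$ contains no $K_{s,t}$, the common neighborhood of $S$ has size at most $t-1$, so the total count is at most $(t-1)\binom{n}{s}$. Combining these and using the estimates $\binom{d}{s} \ge (d-s+1)^{s}/s!$ and $\binom{n}{s} \le n^{s}/s!$ gives
\[
(d-s+1)^{s} \le (t-1)\, n^{s-1},
\]
so $d \le (t-1)^{1/s} n^{1-1/s} + (s-1) \le t^{1/s} n^{1-1/s} + (s-1)$.

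Multiplying by $n/2$ produces $e \le \tfrac12 t^{1/s} n^{2-1/s} + \tfrac12(s-1)n$. The only non-routine point is absorbing the additive $(s-1)n/2$ term into the leading expression $t^{1/s} n^{2-1/s}$ to match the clean bound stated in the theorem; this is automatic once $n \ge ((s-1)/t^{1/s})^{s/(s-1)}$, and the remaining small values of $n$ can be handled by noting that any graph on so few vertices is trivially within the claimed bound. The main ``obstacle'' is therefore not a genuine difficulty but merely this bookkeeping: the combinatorial core of the argument is the double count, and everything else is elementary estimation.
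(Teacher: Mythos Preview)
Your argument is the standard K\H{o}v\'ari--S\'os--Tur\'an double count and is correct; the only caveat is that the convexity/Jensen step and the estimate $\binom{d}{s}\ge (d-s+1)^s/s!$ implicitly require $d\ge s-1$, but when $d<s-1$ the bound is trivial anyway, so this is harmless. Your absorption of the lower-order term $(s-1)n/2$ is also fine: the threshold $n\ge ((s-1)/t^{1/s})^{s/(s-1)}$ is at most $(s-1)^{s/(s-1)}<e(s-1)$, and for $n$ below this one has $\binom{n}{2}\le n^2/2\le t^{1/s}n^{2-1/s}$ since $n\le 2^s t$.

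As for comparison with the paper: there is nothing to compare. The paper does not prove Theorem~\ref{turan-kst}; it is quoted from \cite{1954kova} as a black box and used only as an input elsewhere (e.g.\ in \eqref{vertexG} and in Lemma~\ref{average-degree}). Your write-up supplies exactly the classical proof that the citation points to.
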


The following analogous result of K\H{o}v\'{a}ri, S\'{o}s and Tur\'{a}n \cite{1954kova} provides an upper bound on the average degree in one part of a $K_{s,t}$-free bipartite graph, which will also be used in our arguments.

\begin{lemma} [\cite{1954kova}]\label{1954kova}
Let $G = (A, B)$ be a bipartite graph that does not contain a copy of $K_{s,t}$ with $t$ vertices in $A$ and s vertices in $B$. Then
\[
|A|\tbinom{\bar{d}(A)}{s}\le t\tbinom{|B|}{s},
\]
where $\bar{d}(A) =\sum_{v\in A} d(v)/|A|$ is the average degree of the vertices in $A$.
\end{lemma}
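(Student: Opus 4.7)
The plan is to prove this classical K\H{o}v\'{a}ri--S\'{o}s--Tur\'{a}n counting lemma by double counting combined with a convexity step. The central quantity is the number $N$ of pairs $(v,S)$ with $v\in A$ and $S$ an $s$-subset of $B$ satisfying $S\subseteq N(v)$, which I would bound from above and below in two different ways.

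First I would count $N$ by summing over $v\in A$: for each $v$ the number of admissible $S$ is exactly $\binom{d(v)}{s}$, so
$$N=\sum_{v\in A}\binom{d(v)}{s}.$$
Viewing $\binom{x}{s}$ as the polynomial $x(x-1)\cdots(x-s+1)/s!$ in the real variable $x$, it is convex on $[s-1,\infty)$, so Jensen's inequality applied to the values $d(v)$ (averaged over $v\in A$) gives
$$N\ge |A|\binom{\bar d(A)}{s}.$$

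Next I would count $N$ from the side of $B$: for each $s$-subset $S\subseteq B$, the number of $v\in A$ with $S\subseteq N(v)$ is at most $t-1$, since any $t$ such vertices together with $S$ would form a copy of $K_{s,t}$ with $t$ vertices in $A$ and $s$ in $B$, contradicting the hypothesis. Summing over all $S\subseteq B$ of size $s$ gives
$$N\le (t-1)\binom{|B|}{s}\le t\binom{|B|}{s}.$$
Chaining the two bounds on $N$ yields the claimed inequality $|A|\binom{\bar d(A)}{s}\le t\binom{|B|}{s}$. The only mildly delicate point is making the Jensen step rigorous when $\bar d(A)$ is non-integral; this is handled by interpreting $\binom{\bar d(A)}{s}$ as the falling-factorial polynomial above and using convexity on $[s-1,\infty)$, while the edge case $\bar d(A)<s-1$ is trivial, because the left-hand side of the desired inequality is then non-positive and the right-hand side non-negative.
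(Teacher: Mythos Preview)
The paper does not actually prove this lemma; it is quoted directly from \cite{1954kova} as a classical result. Your double-counting argument is exactly the standard K\H{o}v\'{a}ri--S\'{o}s--Tur\'{a}n proof and is correct in substance.

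One small correction to your edge-case remark: it is not true that $\binom{\bar d(A)}{s}\le 0$ whenever $\bar d(A)<s-1$. The falling-factorial polynomial changes sign at each integer $0,1,\dots,s-1$, so e.g.\ $\binom{1/2}{3}=\tfrac{1}{16}>0$; indeed for $s=3$, $|A|=2$ with degrees $0,1$ and $|B|=1$ the stated inequality actually fails. This is a wrinkle in the lemma's statement rather than in your method, and it is irrelevant to every use in the paper, where $\bar d(A)$ is large. To make your Jensen step fully rigorous whenever $\bar d(A)\ge s-1$: for each integer $d(v)<s$ one has $\binom{d(v)}{s}=0=\binom{s-1}{s}$, so replacing all such $d(v)$ by $s-1$ leaves $\sum_{v}\binom{d(v)}{s}$ unchanged, places every value in the convexity region $[s-1,\infty)$, and only increases the average; Jensen together with the monotonicity of $\binom{x}{s}$ on $[s-1,\infty)$ then yields $\sum_v\binom{d(v)}{s}\ge |A|\binom{\bar d(A)}{s}$.
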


As an immediate consequence of Lemma~\ref{1954kova}, Liu and Montgomery \cite{2017liu} obtained, given the  minimum degree condition in one part, the lower bound on the size of the other part in a $K_{s,t}$-free bipartite graph.

\begin{corollary}[\cite{2017liu}]\label{coro2.6}
Let $G = (A, B)$ be a bipartite graph that does not contain a copy of $K_{s, t}$ with $t$ vertices in $A$ and $s$ vertices in $B$, and in which every vertex in $A$ has at least $\delta$ neighbors in $B$. Then $|B| \ge \frac{\delta}{et}|A|^{1/s}.$
\end{corollary}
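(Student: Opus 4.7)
The plan is to derive Corollary~\ref{coro2.6} directly from Lemma~\ref{1954kova} (K\H{o}v\'{a}ri--S\'{o}s--Tur\'{a}n), combined with elementary estimates on binomial coefficients. Since every vertex in $A$ has at least $\delta$ neighbours in $B$, we have $\bar{d}(A) \ge \delta$, and the map $x \mapsto \binom{x}{s}$ is non-decreasing on $[s,\infty)$. In the non-trivial range $\delta \ge s$, Lemma~\ref{1954kova} therefore yields
\[
|A|\binom{\delta}{s} \;\le\; |A|\binom{\bar d(A)}{s} \;\le\; t\binom{|B|}{s}.
\]

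Next I would apply two standard estimates: $\binom{\delta}{s} \ge (\delta/s)^s$, which follows by writing $\binom{\delta}{s}=\prod_{i=0}^{s-1}(\delta-i)/(s-i)$ and observing that $(\delta-i)/(s-i) \ge \delta/s$ whenever $\delta \ge s$ and $0 \le i \le s-1$; and $\binom{|B|}{s} \le |B|^s/s! \le (e|B|/s)^s$, using the classical bound $s! \ge (s/e)^s$. Substituting both into the previous display and cancelling the common factor $s^{-s}$ yields $|A|\,\delta^s \le t\, e^s\, |B|^s$, so that extracting $s$-th roots produces
\[
|B| \;\ge\; \frac{\delta}{e\, t^{1/s}}\,|A|^{1/s} \;\ge\; \frac{\delta}{et}\,|A|^{1/s},
\]
where the final relaxation just uses $t \ge 1$.

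I do not anticipate any real difficulty here: the whole argument is a one-line consequence of Lemma~\ref{1954kova}. The only cosmetic subtlety is choosing binomial bounds tight enough to land on the advertised constant $1/(et)$; the estimate above in fact gives the slightly stronger $1/(e\, t^{1/s})$, which we relax for agreement with the stated form. The tiny regime $\delta < s$ is not covered by the bound $\binom{\delta}{s}\ge(\delta/s)^s$, but this case is irrelevant for the applications in this paper, where Corollary~\ref{coro2.6} is invoked with $\delta$ growing like a positive power of the average degree $d$ and hence comfortably exceeding $s$.
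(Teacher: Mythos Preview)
Your proposal is correct and follows exactly the route the paper indicates: it states that Corollary~\ref{coro2.6} is obtained by Liu and Montgomery \cite{2017liu} \emph{using Lemma~\ref{1954kova}}, and your derivation does precisely this, with the standard binomial estimates $\binom{\delta}{s}\ge(\delta/s)^s$ and $\binom{|B|}{s}\le(e|B|/s)^s$. Your observation that the argument actually yields the sharper constant $1/(e\,t^{1/s})$ and that the degenerate regime $\delta<s$ is irrelevant for the paper's applications is accurate.
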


Theorem~\ref{1996furedi}, due to F\"{u}redi, provides an explicit upper bound on the Zarankiewicz number $Z(m,n,s,t)$, which is the maximum number of edges in a $K_{s,t}$-free bipartite graph with parts of sizes $m$ and $n$. In our context, it allows us to maintain the edge density of the remaining subgraph after removing a reasonable-sized vertex set.

\begin{theorem}[\cite{1996furedi}]\label{1996furedi}
For all $m\ge s$, $n\ge t$ and  $ t\ge s \ge 2$, we have
\[
Z(m, n, s, t)\le (t-s+1)^{1/s}mn^{1-1/s}+(s-1)n^{2-2/s}+(s-2)m.
\]
\end{theorem}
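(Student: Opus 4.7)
The plan is to carry out a refined K\H{o}v\'ari--S\'os--Tur\'an double-counting argument. Write the bipartition of $G$ as $(M,N)$ with $|M|=m$ and $|N|=n$, placing the $s$-side of the forbidden $K_{s,t}$ in $M$, so that no $s$-subset of $M$ admits $t$ common neighbors in $N$.

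First I would execute the classical double count on pairs $(w,S)$ with $w\in N$ and $S\in\binom{N(w)}{s}$:
$$\sum_{w\in N}\binom{d(w)}{s}=\sum_{S\in\binom{M}{s}}\bigl|\{w\in N:S\subseteq N(w)\}\bigr|\le(t-1)\binom{m}{s},$$
where the upper bound uses the $K_{s,t}$-freeness. Applying convexity of $x\mapsto\binom{x}{s}$ on the left, using the identity $\sum_{w\in N}d(w)=e(G)$, and setting aside vertices of $N$ with $d(w)<s$ (which contribute nothing to the binomial sum but at most $(s-1)n$ edges), one immediately recovers the textbook bound $e(G)\le(t-1)^{1/s}mn^{1-1/s}+\text{lower order}$.

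To replace the leading constant $(t-1)^{1/s}$ by the sharper $(t-s+1)^{1/s}$, I would follow F\"uredi's refinement of the right-hand side. For each $s$-subset $S\subseteq M$ with at least one common $N$-neighbor, single out a \emph{root} $w_S$; the $s$ edges joining $S$ to $w_S$ are then booked directly as edges of $G$. After this assignment, an averaging over the $s$ possible choices of root shows that the reorganised double count caps the "fresh" common neighbors of each $s$-subset by $t-s+1$ rather than $t-1$. The forced edges pulled out by the rooting are then controlled by a second, auxiliary K\H{o}v\'ari--S\'os--Tur\'an estimate, producing the $(s-1)n^{2-2/s}$ correction term; low-degree vertices on the $M$-side (those with $d\le s-2$) contribute the remaining $(s-2)m$ summand in the stated bound.

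The main obstacle lies entirely in the bookkeeping of this refinement. One must arrange the root assignment $S\mapsto w_S$ so that each forced edge is discounted exactly once on both sides of the inequality, and choose the averaging scheme so that no additional constant factor is lost when the main inequality is inverted via convexity. In particular, the auxiliary double count used to absorb the forced edges must itself be tight enough to deliver precisely the additive $(s-1)n^{2-2/s}$ rather than a weaker $O(n^{2-2/s})$ bound, which is the delicate step that distinguishes F\"uredi's sharper inequality from the standard K\H{o}v\'ari--S\'os--Tur\'an estimate.
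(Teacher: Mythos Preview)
The paper does not prove this theorem at all: it is quoted verbatim from F\"uredi's paper \cite{1996furedi} and used as a black box (specifically, inside the proof of Lemma~\ref{average-degree}). So there is no ``paper's own proof'' to compare against.

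As for your outline, the overall architecture---a K\H{o}v\'ari--S\'os--Tur\'an double count on $s$-subsets of $M$, followed by a refinement that replaces the factor $t-1$ by $t-s+1$---is indeed the shape of F\"uredi's argument. However, your description of the refinement is too vague to constitute a proof. F\"uredi's actual trick is to fix a linear order on $N$ and, for each $w\in N$, count only those $s$-subsets $S\subseteq N(w)$ for which $w$ is \emph{not} among the first $s-1$ common neighbours of $S$ in this order; summing over $w$ then bounds the left side by $(t-s)\binom{m}{s}$ plus a correction, and the correction is what produces the $(s-1)n^{2-2/s}$ term after a second convexity step. Your ``root $w_S$ plus averaging'' formulation gestures at this but does not pin down which $s-1$ common neighbours are discounted or why the leftover is exactly $(s-1)\sum_{w}\binom{d(w)-1}{s-1}$, which is the quantity that feeds into the auxiliary bound. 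The $(s-2)m$ term also does not come from ``low-degree vertices on the $M$-side'' as you wrote; it arises from the inequality $\binom{x}{s}\ge\bigl(\frac{x-s+2}{s}\bigr)^{\!s}\binom{s}{s}$ applied on the $N$-side. If you want to turn this into a genuine proof you should consult F\"uredi's short paper directly, since the constants in the statement are exactly what his specific bookkeeping delivers.
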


\subsection{Main tools and overview}\label{sec3}
In this subsection, we provide an overview of the proof of Theorem \ref{main-thm}. For simplicity, in the remainder of the paper we write
\begin{equation*}
\eta:=d^{\frac{s}{2(s-1)}}. 
\end{equation*}
Let $G$ be a $K_{s, t}$-free graph with average degree $d$. Then by Corollary \ref{coro} we can find a bipartite expander $G'$ in $G$. If $G'$ contains a $\mathrm{T}K_{c\eta}^\ell$  for some $c>0$, then we are done. Otherwise, Lemma~\ref{liu} would imply that $G'$ contains a subexpander $H$ with a large order and high minimum degree. Based on this, we further divide the remaining  proof into two cases according to whether $H$ is dense or not. We will use Lemma \ref{yang} to handle the dense case, and the sparse case is covered in Lemma \ref{le3}.

While the Koml\'os--Szemer\'edi lemma only asserts the existence of an expander without specifying size, Lemma~\ref{liu}  provides either a large-sized expander in a bipartite $K_{s,t}$-free graph or a large balanced clique subdivision as desired.

\begin{lemma}\label{liu}
Suppose $t\ge s\ge2$ are integers and $1/d\ll c\ll 1/K\ll\varepsilon_2\ll\varepsilon_1, 1/s, 1/t$. Let $G$ be an $n$-vertex bipartite  $K_{s,t}$-free   $(\varepsilon_1, \varepsilon_2\eta^2)$-expander with $\delta(G) \ge d/8$. Then one of the following holds: 
\begin{enumerate}
\item[$(1)$] There exists a subgraph $H$ of $G$ with $\delta(H) \ge \frac{d}{16}$ and $ |H| \ge K\eta^2$, which is an $(\varepsilon_1/2, \varepsilon_2\eta^2)$-expander. 
\item[$(2)$] $G$ contains a $\mathrm{T}K^{(\ell)}_{c\eta}$ for some $\ell\in \mathbb{N}$.
\end{enumerate}
\end{lemma}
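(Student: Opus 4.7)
The plan is a dichotomy on the size of $G$. If $|G|\ge K\eta^{2}$, then taking $H=G$ immediately gives conclusion $(1)$: the hypothesis that $G$ is an $(\varepsilon_1,\varepsilon_2\eta^2)$-expander implies it is an $(\varepsilon_1/2,\varepsilon_2\eta^2)$-expander (since $\varepsilon(x,\varepsilon_1,k)$ scales linearly in $\varepsilon_1$), and $\delta(G)\ge d/8\ge d/16$, while the size condition is by hypothesis.

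The substantive case is $|G|<K\eta^{2}$. Combining $\delta(G)\ge d/8$ with the K\H{o}v\'ari--S\'os--Tur\'an bound (Theorem \ref{turan-kst}) applied to $e(G)\ge d|G|/16$ yields $|G|\ge (d/(16t^{1/s}))^{s/(s-1)}=\Omega_{s,t}(\eta^2)$, so $|G|=\Theta_{s,t}(\eta^2)$. In other words, $G$ nearly saturates the K\H{o}v\'ari--S\'os--Tur\'an extremum, making it a ``dense'' bipartite $K_{s,t}$-free expander. In this regime I aim to produce conclusion $(2)$, a balanced clique subdivision $\mathrm{T}K_{c\eta}^{(\ell)}$, using a balanced refinement of the construction of Liu--Montgomery \cite{2017liu} that underlies Theorem \ref{Kst-free-subdivision}, in the spirit of the balanced constructions of \cite{2023yang,2023wu}.

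Concretely, I would fix $\ell$ as a constant multiple of the diameter bound $\tfrac{2}{\varepsilon_1}\log^3(15|G|/(\varepsilon_2\eta^2))=O_{\varepsilon_1,\varepsilon_2,K}(1)$ from Lemma \ref{diamter}; note the bound is a constant here precisely because $|G|/(\varepsilon_2\eta^2)<K/\varepsilon_2$. I would then select $c\eta$ candidate branch vertices $v_1,\ldots,v_{c\eta}$ and around each grow a ``unit'' $(T_i,L_i)$: a short rooted tree $T_i$ at $v_i$ whose leaf-set $L_i$ has size $\Omega(\varepsilon_2\eta^2)$, produced by iterated one-step expansion, with Corollary \ref{coro2.6} controlling the growth so the trees do not collide prematurely. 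Processing pairs $\{i,j\}$ sequentially, Lemma \ref{diamter} produces a short $(L_i,L_j)$-path in the residual graph, and an ``adjuster'' gadget embedded inside the leaf-sets pads its length up to exactly $\ell$. The hierarchy $c\ll 1/K\ll\varepsilon_2\ll\varepsilon_1$ is set so that the total vertex usage across all $\binom{c\eta}{2}$ pairs stays well below the $x\cdot\varepsilon(x)/4$ deletion budget of Lemma \ref{diamter}, so the diameter bound persists throughout the construction.

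The main obstacle is simultaneously enforcing \emph{uniform length} $\ell$ and \emph{internal disjointness} across all pairs of branch vertices; the unbalanced analogue (arbitrary lengths) reduces to a standard greedy argument, but insisting that every path has length exactly $\ell$ forces the adjuster gadget into the picture. Uniform length is handled by the padding step, which exploits surplus leaves in each unit to stretch a path to any desired length of the correct parity---bipartiteness of $G$ making the parity constraint work in our favour. Internal disjointness is maintained by the vertex-budget argument above, and the $K_{s,t}$-free property (via Corollary \ref{coro2.6} and Theorem \ref{1996furedi}) ensures no single vertex is overloaded with candidate paths, so greedy deletion does not deplete the graph prematurely. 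The most delicate quantitative point is verifying that after many pairs have been routed, each remaining $L_i$ still contains enough room to host an adjuster; this reduces to $|L_i|=\Omega(\varepsilon_2\eta^2)\gg c\eta\cdot\ell$, which is comfortably ensured by $c\ll 1/K$.
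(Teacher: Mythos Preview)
Your initial dichotomy on $|G|$ is right and matches the paper (which uses threshold $3K\eta^2$). The divergence is in the dense case: you propose to produce conclusion~(2) directly, whereas the paper introduces a \emph{second} dichotomy on the set $Z=\{v:d_G(v)\ge\Delta\}$ of high-degree vertices, with $\Delta=\max\{d/8,\,cdm^{20s}\}$. If $|Z|<d/16$, the paper outputs $H=G-Z$ for conclusion~(1), not~(2): small $|Z|$ forces $\Delta=cdm^{20s}>d/8$, hence $m$ is bounded below, which (unwinding $m=\log(15n/\varepsilon_2\eta^2)$) forces $n\ge 2K\eta^2$, so $H$ is large enough, and removing fewer than $d/16$ vertices costs at most half the expansion.

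Only when $|Z|\ge d/16$ does the paper build the subdivision, and then the branch vertices are taken from $Z$. The high degree is not incidental. The connection of $v_i$ to $v_j$ passes through a set $S_i\subseteq N(v_i)$ of size $\Theta(\Delta)$, whose second neighbourhood, via Corollary~\ref{coro2.6}, has size $\Omega(d\cdot\Delta^{1/(s-1)})\ge\max\{c^2m^{19}\eta^2,\,4\varepsilon_2\eta^2\}$ --- large enough to invoke Lemma~\ref{diamter} and to feed Lemmas~\ref{adjuster1} and~\ref{path1}. If instead you take arbitrary branch vertices of degree $d/8$ and grow depth-two trees with disjoint interiors, achieving leaf-sets of size $\varepsilon_2\eta^2$ requires first-level size $\Omega_{s,t,\varepsilon_2}(d)$ per tree; for $s\ge 3$ the total interior $c\eta\cdot\Omega(d)=\Omega(c\,\eta^{1+2(s-1)/s})$ exceeds $n=O(\eta^2)$ once $d$ is large. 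So the proposal, as stated, does not close for $s\ge 3$. The paper also exploits a ``low-overlap'' property --- each $w\in S(v_i)$ has at most $d/\eta$ neighbours among the branch vertices, again by Corollary~\ref{coro2.6} --- to bound how many sets $S(v_j)$ a single used vertex can damage; this drives the good/bad vertex count that your sketch omits.

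Lastly, the adjusters are not ``embedded inside the leaf-sets'' nor built from ``surplus leaves''. In the paper they are found separately in the residual graph (Lemma~\ref{adjuster1}) from a short even cycle plus two $(D,2)$-expansions, using that $G-W$ retains average degree $\Omega(d)$ for small $W$ (Lemma~\ref{average-degree}), and then spliced into each connection via Lemma~\ref{path1}.
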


\begin{lemma}\label{yang}
Suppose $t\ge s\ge2$ and $1/d\ll c\ll1/K\ll\varepsilon_1, \varepsilon_2, 1/s,  1/t$. Suppose $n\ge K\eta^2$ and   $d\ge\log^{200}n$. Let $G$ be an $n$-vertex bipartite $K_{s,t}$-free   $(\varepsilon_1, \varepsilon_2\eta^2)$-expander with $\delta(G) \ge \frac{d}{16}$. Then $G$ contains a $\mathrm{T}K^{(\ell)}_ {c\eta}$ for some $\ell\in \mathbb{N}$.
\end{lemma}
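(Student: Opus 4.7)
The plan is to follow the unit-based construction from Liu--Montgomery's proof of Theorem~\ref{Kst-free-subdivision}, adapted to produce a balanced subdivision as indicated in Section~\ref{sec3}. First I would build $c\eta$ pairwise vertex-disjoint \emph{units}---bounded-depth trees rooted at prospective branch vertices $v_1,\dots,v_{c\eta}$, each with a leaf set $L_i$ of size $\Omega(\eta)$ at a fixed common depth $r$. Then I would fix a common target length $\ell$ and, processing the $\binom{c\eta}{2}$ pairs $(i,j)$ in sequence, find an internally vertex-disjoint $L_i$--$L_j$ path of length exactly $\ell$ in the leftover graph. Because every unit has the same depth $r$, the resulting subdivision is balanced with every $v_iv_j$-path of length $2r+\ell$.

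\textbf{Units.} From each root $v_i$ with $\deg(v_i)\ge d/16$ in the current residual graph, grow a BFS tree through $r=r(s)=O_s(1)$ levels, invoking Corollary~\ref{coro2.6} at each step on the $K_{s,t}$-free bipartite graph between consecutive layers to lower-bound successive layer sizes by $\Omega((d/t)|A|^{1/s})$; this reaches a leaf layer of size $\Omega(\eta)$ in bounded depth while keeping the whole unit down to $O(\eta)$ vertices. Build $c\eta$ such units greedily, removing them as we go; the total vertex budget $O(\eta^2)$ is at most $n/K$ by hypothesis, and a standard argument (in the spirit of Theorem~\ref{subgraph-expander}) shows the residual graph retains $\delta\ge d/32$ and is still an $(\varepsilon_1/2,\varepsilon_2\eta^2)$-expander.

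\textbf{Balanced connections.} Set the common length
\[
\ell := \frac{4}{\varepsilon_1}\log^3\!\left(\frac{15n}{\varepsilon_2\eta^2}\right)+2r'+O(1),
\]
where $r'=O_s(1)$ further BFS layers suffice (again by iterating Corollary~\ref{coro2.6}) to inflate each $L_i$ into a set $\tilde L_i$ of size at least $\varepsilon_2\eta^2$, whereupon Lemma~\ref{diamter} becomes applicable; round $\ell$ to the parity forced by the bipartition of $G$. For each pair $(i,j)$, in the current leftover graph, perform two-sided BFS from $L_i$ and $L_j$, recording distance layers $L_i^{(a)}, L_j^{(b)}$. Since each such ball grows past size $\varepsilon_2\eta^2$ by depth $r'$ and continues expanding per Definition~\ref{subk}, while Lemma~\ref{diamter} guarantees that the two inflated balls sit within total distance at most $\ell$ of each other, a pigeonhole across admissible splits $a+b+1=\ell$ forces an edge $uw\in E(G)$ with $u\in L_i^{(a)},w\in L_j^{(b)}$; concatenation produces the desired $L_i$--$L_j$ path of length exactly $\ell$.

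\textbf{Main obstacle.} The chief difficulty is sustaining the sublinear expansion across all $\binom{c\eta}{2}$ iterations while simultaneously enforcing vertex-disjointness and exact length $\ell$. Each invocation of Lemma~\ref{diamter} admits only a deletion budget of $x\cdot\varepsilon(x)/4$; to accommodate the cumulative vertex spend $O(\eta^2\ell)=O(\eta^2\,\mathrm{polylog}\, n)$, we must inflate the BFS balls to a size $x$ comfortably exceeding this total, and then verify that the two-sided BFS structure is rich enough to hit the prescribed length $\ell$ rather than just something $\le\ell$. The density hypothesis $d\ge\log^{s'}n$ is exactly what keeps $\ell$ polylogarithmic so that the spend stays $o(n)$, and $K_{s,t}$-freeness is used via Corollary~\ref{coro2.6} to keep each unit of size $O(\eta)$ so that the initial $c\eta$ units cost only $O(\eta^2)\le n/K$ vertices; balancing these two budgets, together with tracking bipartite parity, is where the careful bookkeeping will go.
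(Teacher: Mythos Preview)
Your proposal has a genuine gap at the \textbf{exact-length step}. The two-sided BFS plus ``pigeonhole across admissible splits $a+b+1=\ell$'' is not a valid mechanism for producing a path of a \emph{prescribed} length. Lemma~\ref{diamter} only bounds the distance between two large sets from above; it says nothing about which lengths are actually realised, and there is no reason every (or even any particular) split $a+b+1=\ell$ should admit an edge between $L_i^{(a)}$ and $L_j^{(b)}$. Knowing that two inflated balls are within distance $\le\ell$ of each other does not force a path of length exactly $\ell$, and the graph need not be anywhere close to a tree, so the BFS layers can intersect and collapse in uncontrolled ways. This is precisely the problem that \emph{adjusters} (Definition~\ref{defad}) are designed to solve, and your plan omits them entirely.

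The paper's proof follows your high-level outline of units plus greedy pairwise connections, but with two essential differences. First, its units are the much larger $(c_1'\eta, m^{20}, c_1'\eta, 2m^4)$-units of Lemma~\ref{unit}, with only the \emph{interiors} pairwise disjoint and exteriors of size $\Omega(\eta^2 m^{20})$; this gives enough room to absorb the cumulative vertex spend, and a good/bad counting argument (not full disjointness) controls how many units survive. Second, to hit the target length exactly, the paper builds a $(D,m^4,16m^4)$-adjuster via Lemma~\ref{adjuster2} disjoint from everything used so far, connects each unit's exterior to one end of the adjuster via Lemma~\ref{path2} so that the two connecting paths have total length in a window of width $O(m^4)$, and then uses the adjuster's $\pm 2$ flexibility over a range of $16m^4$ to land on $\ell$ exactly (with bipartite parity handling the mod-$2$ issue). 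Your budget arithmetic also underestimates what is needed: the paper takes $\ell\approx m^{16}$ and $D\approx \eta^2 m^{20}$, not $\ell\approx m^3$, precisely because the adjuster and connecting paths must fit inside $\ell$ while the deletion budget $x\cdot\varepsilon(x)/4\approx x/m^2$ must dominate $\binom{c\eta}{2}\ell$.
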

For sparse expanders, we use a result of Wang \cite{2023wang} to find a balanced clique subdivision of linear size.
\begin{lemma}\label{le3}{\rm(Lemma 1.3 in \cite{2023wang})}.
Suppose $1/d\ll c\ll\varepsilon_1, \varepsilon_2$ and  $d<\log^{200}n$. Let $G$ be an $n$-vertex bipartite $\mathrm{T}K^{(2)}_{d/2}$-free  $(\varepsilon_1, \varepsilon_2d)$-expander with $\delta(G) \ge d$. Then $G$ contains a $\mathrm{T}K^{(\ell)}_ {cd}$ for some $\ell\in \mathbb{N}$.
\end{lemma}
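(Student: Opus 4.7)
The plan is to construct $cd$ branch vertices together with internally vertex-disjoint paths of a common length $\ell$ between every pair, where $\ell$ is chosen on the order of $\log^3(n/d)$ so that Lemma \ref{diamter} can supply every needed connection. First I would greedily pick candidate branch vertices $v_1, \ldots, v_{cd}$ and, around each $v_i$, reserve a small \emph{hub} $U_i$ of vertices reachable from $v_i$ by short paths; a BFS to depth a small multiple of $\log(n/(\varepsilon_2 d))$ inside the remaining graph produces hubs of size polynomial in $d$ whose union is still only a small fraction of $V(G)$. Since $G$ is an $(\varepsilon_1, \varepsilon_2 d)$-expander with $\delta(G) \ge d$, these hubs and the ambient expansion survive after deleting the already-committed vertices.

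Next I would fix a target length $\ell$ and, processing the $\binom{cd}{2}$ pairs in some order, for each pair $\{v_i,v_j\}$ apply Lemma \ref{diamter} to the graph remaining after deletion of previously committed internal vertices, obtaining a $(U_i,U_j)$-path of length at most $L_0 := \frac{2}{\varepsilon_1}\log^3(15n/(\varepsilon_2 d))$. Because $d < \log^{s'} n$, this $L_0$ is polylogarithmic in $n$, which is essential for bounding the balancing overhead. Concatenating this raw path with short segments inside $U_i$ and $U_j$ yields a $(v_i,v_j)$-path of length lying in a narrow window of width $O(L_0)$, determined by the depth and branching of the hubs.

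The main obstacle is the final balancing: every raw path must be lengthened to exactly $\ell$ without disturbing the previously constructed paths. I would handle this using \emph{adjusters}, small gadgets with two designated endpoints $x,y$ admitting internally vertex-disjoint $(x,y)$-paths of every length in an interval $[L,L+m]$; splicing one such gadget into a raw path corrects any length discrepancy of at most $m$. This is where the $\mathrm{T}K_{d/2}^{(2)}$-free hypothesis enters: if the graph contained a copy of $\mathrm{T}K_{d/2}^{(2)}$ it would already contain $\mathrm{T}K_{cd}^{(2)}$ and we would be finished, so we may assume its absence, which forces the expansion neighborhoods around each vertex to be sufficiently ``spread out'' that a new adjuster can be constructed on hitherto unused vertices. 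The crux of the argument is the accounting: one must verify that after carving out $\binom{cd}{2}$ raw paths plus the same order of adjusters, the total number of used vertices stays safely below $n - \varepsilon_2 d$ so that Lemma \ref{diamter} keeps applying at every step. This bookkeeping is where the hierarchy $1/d \ll c \ll \varepsilon_1,\varepsilon_2, 1/s'$ is spent, and is the step I would expect to be the most delicate.
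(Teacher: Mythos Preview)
This lemma is not proved in the present paper; it is quoted as Lemma~1.3 of Wang~\cite{2023wang} and used as a black box in the proof of Theorem~\ref{main-thm}. There is thus no in-paper argument to compare against.

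Your outline follows the same architecture as Wang's proof and the analogous constructions in~\cite{2023liu,2023yang} and in Sections~\ref{sec4}--\ref{sec5} of this paper: anchor branch vertices inside hub/unit structures, use the expander short-path lemma (Lemma~\ref{diamter}) to make raw connections, and splice in adjusters to equalise all path lengths. One quantitative point to fix: the BFS depth ``a small multiple of $\log(n/(\varepsilon_2 d))$'' is too shallow, since the sublinear expansion rate is only $1+\Theta(1/\log^2(n/d))$ per step; growth to polynomial-in-$d$ size requires depth on the order of $\log^3(n/d)$, as in the bound of Lemma~\ref{diamter}. Your reading of the $\mathrm{T}K^{(2)}_{d/2}$-free hypothesis --- that it forces neighbourhoods to spread --- is correct in spirit; concretely it guarantees that second neighbourhoods of single vertices have size $\Omega(d^2)$, which is what lets one build the $(D,m)$-expansions at the ends of each simple adjuster (compare the analogous use of the $K_{s,t}$-free hypothesis in the proof of Lemma~\ref{adjuster1} here). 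With these adjustments your plan is the right one, and the bookkeeping you flag as delicate is indeed where most of the work lies.
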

We also use the following result in \cite{2017liu}. 
\begin{proposition}{\rm(Proposition 5.2  in \cite{2017liu})}.\label{prop}
Let $t\ge s\ge2$ be integers and choose $0<\varepsilon_1<1$ and $0<\varepsilon_2<\frac{1}{10^5t}$. If $G$ is a $K_{s, t}$-free, $(\varepsilon_1, \varepsilon_2\eta^2)$-expander with $\delta(G)\ge \frac{d}{16}$, then $G$ is also an $(\varepsilon_1, \varepsilon_2d)$-expander.
\end{proposition}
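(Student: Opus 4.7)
The plan is to unfold the definitions and compare the expansion requirement at threshold $k_{2}:=\varepsilon_{2}d$ with the hypothesized one at $k_{1}:=\varepsilon_{2}\eta^{2}$. Since $s\ge 2$ gives $\eta^{2}=d^{s/(s-1)}\ge d$, we have $k_{1}\ge k_{2}$. Fix any $X\subseteq V(G)$ with $k_{2}/2\le|X|\le n/2$. If $|X|\ge k_{1}/2$, the hypothesis yields $|N(X)|\ge\varepsilon(|X|,\varepsilon_{1},k_{1})|X|$, and since the expansion rate $\varepsilon_{1}/\log^{2}(15x/k)$ is nondecreasing in $k$, we conclude $\varepsilon(|X|,\varepsilon_{1},k_{1})\ge\varepsilon(|X|,\varepsilon_{1},k_{2})$, as required. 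So the interesting range is $k_{2}/2\le|X|<k_{1}/2$, where I will argue directly from $\delta(G)\ge d/16$ and $K_{s,t}$-freeness.

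In this range, Theorem~\ref{turan-kst} gives $e(G[X])\le t^{1/s}|X|^{2-1/s}$; combined with $\sum_{v\in X}d(v)\ge d|X|/16$, this yields
\[
e(X,N(X))\ge \tfrac{d|X|}{16}-2t^{1/s}|X|^{2-1/s}.
\]
The key observation is that $|X|<\varepsilon_{2}d^{s/(s-1)}/2$ forces $|X|^{(s-1)/s}\le(\varepsilon_{2}/2)^{(s-1)/s}d$, so the subtracted term is at most $2t^{1/s}(\varepsilon_{2}/2)^{(s-1)/s}d|X|$; the condition $\varepsilon_{2}<1/(10^{5}t)$ (together with $t\ge s$) readily brings this below $d|X|/32$, whence $e(X,N(X))\ge d|X|/32$. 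Next, Lemma~\ref{1954kova} applied to the $K_{s,t}$-free bipartite graph $G[X,N(X)]$ with $\bar d(X)\ge d/32$ gives $|X|\binom{d/32}{s}\le t\binom{|N(X)|}{s}$, from which standard binomial estimates (valid once $d\gg s$, which is subsumed by the hierarchies of the lemmas invoking this proposition) produce
\[
|N(X)|\ge \frac{d\,|X|^{1/s}}{64\,s\,t^{1/s}}.
\]
Comparing with $\varepsilon_{1}|X|/\log^{2}(15|X|/(\varepsilon_{2}d))$, the target inequality reduces to $\log^{2}(15|X|/(\varepsilon_{2}d))\ge 64\,s\,t^{1/s}\varepsilon_{1}|X|^{(s-1)/s}/d$; using $|X|^{(s-1)/s}/d\le(\varepsilon_{2}/2)^{(s-1)/s}$ together with $\varepsilon_{2}<1/(10^{5}t)$ and $t\ge s$, the right side is bounded by a small absolute constant, while the left side is at least $\log^{2}(15/2)>4$ because $|X|\ge\varepsilon_{2}d/2$.

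The main obstacle is the constant-chasing in the second paragraph: the quantitative hypothesis $\varepsilon_{2}<1/(10^{5}t)$ must simultaneously dominate both the internal edge subtraction (so that $\Omega(d|X|)$ edges genuinely escape $X$) and the final comparison between the KST-derived lower bound on $|N(X)|$ and the expander rate at threshold $k_{2}$. Handling both uniformly across all $t\ge s\ge 2$ is what forces this particular form of $\varepsilon_{2}$; the critical leverage is that $s\,t^{1/s}\varepsilon_{2}^{(s-1)/s}$ remains uniformly small thanks to $t\ge s$, so that a weaker choice of $\varepsilon_{2}$ of the form $1/(Ct)$ with $C$ much smaller than $10^{5}$ would fail one of the two steps.
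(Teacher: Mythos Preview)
The paper does not prove this proposition; it is quoted verbatim from \cite{2017liu} (Liu--Montgomery 2017) and used as a black box. Your argument is correct and is the natural one: split according to whether $|X|\ge k_1/2$ (where monotonicity of $k\mapsto\varepsilon_1/\log^2(15x/k)$ transfers the hypothesis) or $k_2/2\le |X|<k_1/2$ (where $\delta(G)\ge d/16$ together with the K\H{o}v\'ari--S\'os--Tur\'an bound forces $|N(X)|\gtrsim d|X|^{1/s}/t^{1/s}$, which dominates $\varepsilon(|X|,\varepsilon_1,k_2)|X|$ thanks to $|X|^{(s-1)/s}\le(\varepsilon_2/2)^{(s-1)/s}d$ and $\varepsilon_2<1/(10^5t)$). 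This is essentially the proof in the cited source.

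Two small remarks. First, your caveat ``valid once $d\gg s$'' is harmless in context but worth stating cleanly: if $d$ is so small that $d/32<s$, then $\varepsilon_2\eta^2/2<1$ and the interesting range is empty, so nothing is lost. Second, your closing sentence about how a weaker choice $\varepsilon_2=1/(Ct)$ with $C$ much smaller than $10^5$ ``would fail one of the two steps'' overstates the sharpness; the constant $10^5$ is merely convenient, and any sufficiently large absolute constant would do. This does not affect correctness.
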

Now we are ready to prove Theorem \ref{main-thm}.

\begin{proof}[Proof of Theorem \ref{main-thm}] We choose $1/d\ll c\ll1/K\ll\varepsilon_2\ll\varepsilon_1, 1/s, 1/t$. 
Let $G$  be an $n$-vertex $K_{s,t}$-free graph with average degree $d > 0$. By Corollary \ref{coro}, $G$ contains a bipartite subgraph $G_1$ with $\delta(G_1) \ge d(G_1)/2 \ge d/8$ which is an $(\varepsilon_1, \varepsilon_2\eta^2)$-expander.

By Lemma \ref{liu}, we can find either a balanced $\mathrm{T}K_{c_{{\ref{liu}}}\eta}$ in $G_1$, or a subgraph $H$ in $G_1$ with $\delta(H) \ge \frac{d}{16}$ and $|H| \ge K\eta^2$, which is an $(\varepsilon_1/2, \varepsilon_2\eta^2)$-expander. If $d \ge \log^{200} |H|$, then by the choice of $\varepsilon_2$ and $K$ with Lemma \ref{yang}, $H$ contains a  balanced $\mathrm{T}K_{c_{{\ref{yang}}}\eta}$. Otherwise, $d < \log^{200} |H|$. Since $H$ (as a subgraph of $G$) is $K_{s,t}$-free, Proposition \ref{prop} implies that $H$ is also an $(\varepsilon_1/2, \varepsilon_2d)$-expander and we may assume $H$ is $\mathrm{T}K_{d/2}^{(2)}$-free. Then Lemma \ref{le3} yields that $H$ contains a  balanced $\mathrm{T}K_{c_{{\ref{le3}}}\eta}$. Thus, we can take $c=\text{min}\{c_{{\ref{liu}}}, c_{{\ref{yang}}}, c_{{\ref{le3}}}\}$ and then $G$ contains a balanced $\mathrm{T}K _{c\eta}$.
\end{proof}

\section{Proofs of Lemmas \ref{liu} and \ref{yang}}\label{pf-main-lemmas}

\subsection{Unit and adjuster}\label{sub:definitions}

\begin{figure}[h]
\begin{center}
\scalebox {1.50}[1.50]{\includegraphics {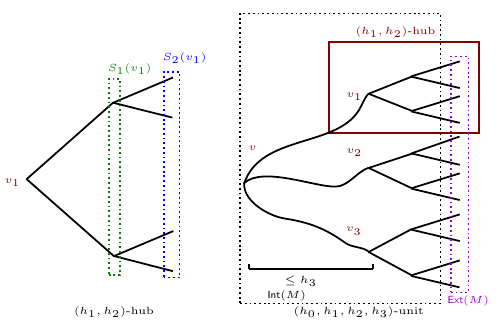}}
\end{center}
\caption{An $(h_0,h_1,h_2,h_3)$-unit with $h_0=3$ and  $h_1=h_2=2$, an $(h_1,h_2)$-hub with $h_1=h_2=2$.}
\end{figure}
In this subsection,  we introduce some notions from Liu and   Montgomery \cite{2017liu, 2023liu}.
\begin{definition}
\rm
(Hub \cite{2017liu}) Given integers $h_1, h_2 > 0$, an \emph{$(h_1, h_2)$-hub} is a graph consisting of a vertex $v_1$ called \emph{center}, a set $S_1(v_1) \subseteq N(v_1)$ of size $h_1$, and pairwise disjoint sets $S_1(z) \subseteq N(z)\backslash\{v_1\}$ of size $h_2$ for each $z \in S_1(v_1)$. Denote by $H(v_1)$ a hub with center $v_1$ and write $B_1(v_1) = \{v_1\}\cup S_1(v_1)$
and $S_2(v_1) = \cup _{z\in S_1(v_1)} S_1(z)$. For any $z \in S_1(v_1)$, write $B_1(z) = \{z\} \cup S_1(z)$.
\end{definition}
\begin{definition}
\rm
(Unit \cite{2017liu}) Given integers $h_0$, $h_1$, $h_2$, $h_3 > 0$, an \emph{$(h_0, h_1, h_2, h_3)$-unit} $M$ is a graph
consisting of a \emph{core vertex} $v$, $h_0$ vertex-disjoint $(h_1, h_2)$-hubs $H(v_1), \ldots, H(v_{h_0})$ and pairwise disjoint $(v, v_j)$-paths of length at most $h_3$. By the \emph{exterior} of the unit, denoted $\mathsf{Ext}(M)$, we mean $\bigcup_{j=1}^{h_0} S_2(v_j)$. Denote by $\mathsf{Int}(M):=V (M)\backslash \mathsf {Ext}(M)$ the \emph{interior} of the unit.
\end{definition}
\begin{definition}
\rm
(Expansion \cite{2017liu}) Given a vertex $v$ in a graph $F$, $F$ is a \emph{$(D, m)$-expansion} of $v$
if $|F| = D$ and $v$ is at a distance of at most $m$ in $F$ from any other vertex of $F$.
\end{definition}
\begin{proposition} \rm(\cite{2023liu})\label{3.4}
 Let $D$, $m \in \mathbb{N}$ and $1 \le D_0 \le D$. Then, any 
$(D, m)$-expansion of $v$ contains a subgraph which is a $(D_0, m)$-expansion of $v$.
\end{proposition}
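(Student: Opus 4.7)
The plan is to reduce the statement to finding a suitable connected subtree of a BFS tree rooted at $v$. Concretely, I would start by fixing a breadth-first-search tree $T$ of $F$ rooted at $v$. Because $F$ is a $(D,m)$-expansion of $v$, we have $|F|=D$ and every vertex of $F$ lies within distance $m$ of $v$; BFS preserves distance from the root, so $T$ has $D$ vertices and every $u\in V(T)$ satisfies $d_T(v,u)=d_F(v,u)\le m$.

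Next I would construct $T_0$ greedily. Initialize $V(T_0):=\{v\}$ and, while $|V(T_0)|<D_0$, pick any vertex $u\in V(T)\setminus V(T_0)$ that has a neighbor in $V(T_0)$ along some edge of $T$; such a $u$ exists since $T$ is connected and $V(T_0)\subsetneq V(T)$. Add $u$ to $V(T_0)$ together with that connecting $T$-edge. After exactly $D_0-1$ additions we obtain a subgraph $T_0\subseteq T\subseteq F$ with $|V(T_0)|=D_0$.

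The key observation is that $T_0$ is, by construction, a connected subgraph of the tree $T$, hence itself a subtree containing $v$. Because $T$ has unique paths, the path from $v$ to any $u\in V(T_0)$ in $T_0$ must coincide with the (unique) $v$--$u$ path in $T$; in particular, $d_{T_0}(v,u)=d_T(v,u)\le m$. Viewing $T_0$ as a subgraph of $F$, distances can only decrease, so $v$ is at distance at most $m$ in $T_0$ (and hence in this subgraph of $F$) from every other vertex, yielding a $(D_0,m)$-expansion of $v$ as required.

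I do not anticipate a genuine obstacle: the entire argument is a one-line structural consequence of the uniqueness of paths in trees, once the BFS tree is in place. The only subtlety worth flagging is to emphasize that the greedy growth keeps $V(T_0)$ closed under taking ancestors in $T$ with respect to $v$, which is exactly what guarantees that the $v$--$u$ paths survive inside the subgraph.
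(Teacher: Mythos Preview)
The paper does not supply its own proof of this proposition; it is quoted verbatim from Liu--Montgomery \cite{2023liu} and used as a black box. Your argument is correct and is the standard one: take a BFS tree $T$ of $F$ rooted at $v$, and greedily grow a connected subtree $T_0\ni v$ of size $D_0$. Since any connected subgraph of a tree is itself a tree, the unique $v$--$u$ path in $T_0$ coincides with the unique $v$--$u$ path in $T$, whence $d_{T_0}(v,u)=d_T(v,u)=d_F(v,u)\le m$, and $T_0$ is the desired $(D_0,m)$-expansion.

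One minor comment: your closing sentence about distances ``only decreasing'' when viewing $T_0$ inside $F$ is both unnecessary and pointing the wrong way (distances in a subgraph are at least those in the supergraph). You have already established $d_{T_0}(v,u)\le m$, and the definition of a $(D_0,m)$-expansion refers to distances \emph{within} the subgraph itself, so nothing further is needed. Likewise, the closure of $V(T_0)$ under taking ancestors is automatic from connectivity and the fact that $v\in T_0$, so you need not flag it as an extra requirement.
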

Liu and Montgomery \cite{2023liu} introduced a gadget called \emph{adjuster}, which plays a key role in finding subdividing paths of equal length in our proof.
\begin{definition}
\rm
(Adjuster \cite{2023liu})\label{defad} 
For $D, m, k \in \mathbb{N}$, a $(D, m, k)$-adjuster $\mathcal{A}= (v_1, F_1, v_2, F_2, A)$ in a graph $G$ consists of core vertices $v_1$, $v_2 \in V (G)$, graphs $F_1$, $F_2 \subseteq G$ and a center vertex set $A \subseteq V (G)$ such that the following hold for some $\ell \in \mathbb{N}$.
\stepcounter{propcounter}
\begin{enumerate}[label = ({\bfseries \Alph{propcounter}\arabic{enumi}})]  
\rm\item\label{Ad1} The subsets $A$, $V (F_1)$ and $V (F_2)$ are pairwise disjoint.

\rm\item\label{Ad2} For each $i \in [2]$, $F_i$ is a $(D, m)$-expansion of $v_i$.

\rm\item\label{Ad3} $|A| \le 10mk$. 

\rm\item\label{Ad4} For each $i \in \{0, 1, \ldots, k\}$, there is a $(v_1, v_2)$-path in $G[A \cup \{v_1, v_2\}]$ of length $\ell + 2i$.
\end{enumerate}
\end{definition}
We call the smallest such $\ell$  in Definition \ref{defad} the \emph{initial length} of the adjuster and denote it by
$\ell(\mathcal{A})$. Note that $\ell(\mathcal{A}) \le |A|+1 \le 10mk+1$. For convenience, we often call a $(D, m, 1)$-adjuster
a \emph{simple adjuster}. Let $V (\mathcal{A}) = V (F_1) \cup V (F_2) \cup A$.

\subsection{Proof of Lemma \ref{liu}}\label{sec4}

Here, we give an outline of the proof of Lemma \ref{liu}. Assume that $G$ is as in Lemma \ref{liu}. Let $Z$ be the set of vertices in $G$ with high degree. If  $|Z|$ is large, then we find a desired $\mathrm{T}K^{(\ell)}_{c\eta}$  using some vertices in $Z$ as branch vertices, and greedily connecting each pair of those vertices with vertex-disjoint paths of  length $\ell$. Otherwise, let $H=G-Z$.  We show that $|H|\geq K\eta^2$ and $H$ almost has the similar expansion property. 

We first state the following two lemmas and then prove Lemma \ref{liu}. In subsection \ref{sub:Building-adjuster}, we prove Lemma \ref{adjuster1}. Lemma \ref{path1} uses a slightly different construction from Theorem 2.7 in Liu and Montgomery \cite{2023liu}, and we leave its proof in Appendix \ref{3.7-le}. Recall that $\eta:=d^{\frac{s}{2(s-1)}}$. Throughout the rest of the paper, we take 
\[
m:=\log\frac{15n}{\varepsilon_2\eta^2}
\]
and let $\ell$ be the smallest even integer larger than $m^{10}$ in the following of this subsection.

\begin{lemma}\label{adjuster1}
Suppose $t\ge s\ge2$ are integers,  $1/d\ll c\ll 1/K\ll\varepsilon_2\ll\varepsilon_1, 1/s, 1/t$.  Let $D=\max\left\{\frac{c^2m^{19}\eta^2}{10^{20}},~\frac{\varepsilon_2\eta^2}{10^{20}}\right\}$ and $G$ be an $n$-vertex $K_{s,t}$-free  $(\varepsilon_1, \varepsilon_2\eta^2)$-expander with $\delta(G) \ge d$ and  $n<3K\eta^2$. 
If $W$ is a subset of $V(G)$ with $|W| \le 2D/m^3$, then  $G - W$ contains a $(D, m^4, r)$-adjuster for each $r \le\frac{\eta ^2m^{12}}{20}$.
\end{lemma}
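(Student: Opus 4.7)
The plan is to prove Lemma~\ref{adjuster1} by induction on $r$, starting from a simple $(D,m^4,1)$-adjuster and merging in fresh simple adjusters to increase the adjustability parameter one at a time.

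To build a $(D,m^4)$-expansion of any vertex $v^*\notin W$, I would grow a BFS ball in $G-W$: using $\delta(G)\ge d$ and, in the sub-threshold regime $|S|<\varepsilon_2\eta^2/2$, the $K_{s,t}$-freeness via Corollary~\ref{coro2.6}, the ball reaches size $\varepsilon_2\eta^2/2$ in $O(m)$ layers; above the threshold each layer multiplies the size by $1+\Omega(\varepsilon_1/m^2)$, so within a further $O(m^3)$ layers the ball exceeds size $D$. Trimming via Proposition~\ref{3.4} yields the required $(D,m^4)$-expansion, and the removal of $W$ is tolerated by Lemma~\ref{diamter} because $|W|\le 2D/m^3$ is far below the $x\cdot\varepsilon(x)/4$ deletion budget for sets of size $x\ge \varepsilon_2\eta^2$. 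To construct the simple adjuster, I would locate prospective cores $v_1,v_2$ at mutual distance $O(m^3/\varepsilon_1)$ (via Lemma~\ref{diamter} applied to two small disjoint BFS balls), choose a first $(v_1,v_2)$-path $P_0$ of some length $\ell_0=O(m^3/\varepsilon_1)$ supplied by Lemma~\ref{diamter}, and then in the graph $G-W-(V(P_0)\setminus\{v_1,v_2\})$ produce a second internally disjoint $(v_1,v_2)$-path $P_1$ of length exactly $\ell_0+2$ by inserting a length-$2$ excursion through a common neighbor inside a local BFS ball; set $A:=V(P_0\cup P_1)\setminus\{v_1,v_2\}$, then grow the expansions $F_1,F_2$ of $v_1,v_2$ in the residual graph.

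For the inductive merging step, given a $(D,m^4,k)$-adjuster $\mathcal{A}_k=(v_1,F_1,v_2,F_2,A_k)$, I would produce a fresh simple adjuster $\mathcal{A}_s=(u_1,F_1',u_2,F_2',A_s)$ inside $G-W-(A_k\cup V(F_1)\cup V(F_2))$ via the base-case construction, and then use Lemma~\ref{diamter} to produce a short path $Q$ from $V(F_2)$ to $V(F_1')$ in the graph with $W$, $A_k$, $A_s$, $V(F_1)$, and $V(F_2')$ removed. Via Proposition~\ref{3.4}, short sub-expansions of $F_2$ and $F_1'$ linking $v_2$ and $u_1$ to the endpoints of $Q$ are extracted and absorbed into a merged center set; independently choosing the adjustment inside $A_k$ (giving $k+1$ options) and inside $A_s$ (giving $2$ options) yields $k+2$ distinct path lengths, all of the same parity, producing a $(D,m^4,k+1)$-adjuster with cores $v_1,u_2$ and expansions $F_1,F_2'$.

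The main obstacle is bookkeeping the vertex budget across all $r\le \eta^2m^{12}/20$ merges: the cumulative removed set reaches size at most $|W|+2D+10m^4 r$, which under the parameter regime $D=\Theta(c^2m^{19}\eta^2/10^{20})$, $n<3K\eta^2$, and $1/d\ll c,\varepsilon_2\ll 1/\log K$ stays small enough that every invocation of Lemma~\ref{diamter} (both for path-finding and for expansion-growth) remains valid, since the corresponding tolerance $x\cdot\varepsilon(x)/4$ dominates the cumulative deletions at each step. A secondary subtlety is the bipartite parity of $(v_1,v_2)$-paths, which forces all adjustments to be by multiples of $2$; this is compatible with the construction because $\ell_0$ and $\ell_0+2$ have the same parity, and the induction preserves this parity. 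The $K_{s,t}$-freeness enters only in controlling the small-ball BFS growth via Corollary~\ref{coro2.6}.
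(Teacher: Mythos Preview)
Your inductive merging step is essentially the same as the paper's, and the bookkeeping you describe for the cumulative deletions is in the right spirit. The genuine gap is in your construction of the simple $(D,m^4,1)$-adjuster.

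First, the BFS-ball expansion around an \emph{arbitrary} vertex $v^*\notin W'$ does not work. In the inductive step you need a fresh simple adjuster inside $G-W'$ with $W'=W\cup A_k\cup V(F_1)\cup V(F_2)$, and here $|W'|$ is of order $D$, which (since $D\asymp c^2m^{19}\eta^2$ and $\eta^2=d^{s/(s-1)}$) can be far larger than $d$. Thus $\delta(G)\ge d$ tells you nothing about degrees in $G-W'$: a given $v^*$ may have all its neighbours in $W'$, and your sub-threshold growth via Corollary~\ref{coro2.6} needs a lower bound on the number of neighbours each boundary vertex has \emph{outside} $W'$, which you do not have. The paper circumvents this by first invoking Lemma~\ref{average-degree} (a Zarankiewicz-type estimate, Theorem~\ref{1996furedi}) to show $d(G-W')\ge d/4$ whenever $|W'|\le 4D$, then passing via Theorem~\ref{subgraph-expander} to a bipartite expander $H\subseteq G-W'$ with $\delta(H)\ge d/32$. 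This average-degree-preservation step is where $K_{s,t}$-freeness enters most crucially, and it is missing from your proposal.

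Second, your ``length-$2$ excursion through a common neighbor'' is not a valid mechanism for producing two $(v_1,v_2)$-paths whose lengths differ by exactly $2$. In a bipartite graph, adjacent vertices have no common neighbour, and replacing an edge by a $2$-path changes the length by $1$, not $2$; you would need to locate a specific local gadget whose existence you have not argued. The paper instead takes a shortest even cycle $C$ in $H$ (length $2r\le m^4/32$, obtained from Lemma~\ref{diamter}), picks $v_1,v_2\in V(C)$ at distance $r-1$ along $C$, so that the two arcs of $C$ automatically give $(v_1,v_2)$-paths of lengths $r-1$ and $r+1$. The $(D,2)$-expansions $F_1,F_2$ are then built directly from two-step neighbourhoods of $v_1,v_2$ in $H-C$, with the size bound $|B_i|\ge 2D$ coming from Corollary~\ref{coro2.6}. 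This is both simpler and robust to the large deletions required in the induction.
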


\begin{lemma}\label{path1}
Suppose $t\ge s\ge2$ are integers,  $1/d\ll c\ll 1/K\ll\varepsilon_2\ll\varepsilon_1, 1/s, 1/t$. Let $D=\max\left\{\frac{c^2m^{19}\eta^2}{10^{20}},~\frac{\varepsilon_2\eta^2}{10^{20}}\right\}$ and $G$ be an $n$-vertex $K_{s,t}$-free $(\varepsilon_1, \varepsilon_2\eta^2)$-expander with $\delta(G) \ge d$ and  $n<3K\eta^2$, and let $W$ be a subset of $V(G)$ with $|W| \le 2D/m^3$ and $\ell \le\frac{\eta ^2m^{12}}{20}$.  For each $i \in [2]$, let $U_i \subseteq V (G)-W$ with $|U_i|\ge D$  satisfy $U_1\cap U_2=\emptyset$, $F_i \subseteq G-W -U_1-U_2$ be a $(D, m^4)$-expansion of some $v_i$ with $V(F_1)\cap V(F_2)=\emptyset$. Then there exist $u_i\in U_i$  and a $(u_i,v_i)$-path $P_i$ in $G-W$ for each $i\in[2]$ such that $V(P_1)\cap V(P_2)=\emptyset$ and $\ell\le \ell(P_1) + \ell(P_2) \le \ell + 8m^4$. 
\end{lemma}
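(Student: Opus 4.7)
The goal is to build two vertex-disjoint paths $P_1, P_2$ from $v_i$ to some $u_i \in U_i$ with combined length in the narrow window $[\ell, \ell + 8m^4]$. Since $\ell \ge m^{10}$ vastly exceeds the $O(m^3)$ expander diameter given by Lemma~\ref{diamter}, neither path can be a shortest path; instead each $P_i$ must be a long chain of short segments produced using the sublinear-expander property, in the style of Theorem~2.7 of~\cite{2023liu}.

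The plan is to construct $P_1$ first and $P_2$ second in $G - W - V(P_1)$, so that vertex-disjointness is automatic; split $\ell_1 = \ell_2 = \ell/2$ (valid since $\ell$ is even) and target each $P_i$ to have length in $[\ell_i, \ell_i + 4m^4]$. For each $i$, proceed iteratively from $v_i$, maintaining a partial path $Q$ with current endpoint $w$, initially $Q = \{v_i\}$ and $w = v_i$. The inductive step is: grow a $(D, m^4)$-expansion $F$ around $w$ in the residual graph $G - W - V(F_{3-i}) - U_{3-i} - V(P_{3-i}) - V(Q)$, using the sublinear-expander BFS growth sub-routine underlying Lemma~\ref{adjuster1}; then apply Lemma~\ref{diamter} with $F$ as one of the two large sets to produce a short path of length in $[m^3/2, m^3]$ from $w$ to some new vertex $w'$; append this segment to $Q$, update $w \leftarrow w'$, and discard the unused vertices of $F$. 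After $\Theta(\ell_i/m^3)$ iterations, $Q$ has length in $[\ell_i, \ell_i + 2m^3]$. To close the path, apply Lemma~\ref{diamter} a final time with the last grown expansion $F$ (size $\ge D$) and $U_i$ (size $\ge D$) as endpoint sets, yielding a connecting path of length $\le 2m^3/\varepsilon_1 \le m^4$ into $U_i$; concatenation produces $P_i$ of length in $[\ell_i, \ell_i + 4m^4]$.

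The main obstacle is the tight length control: each segment has length in $[m^3/2, m^3]$ and after $\Theta(\ell_i/m^3)$ iterations the cumulative length must land in a window of width only $O(m^4)$. This is handled by choosing the iteration count precisely, and by using the slack available in the diameter-lemma connection at the final step to absorb any small remainder. A secondary concern is that the residual graph must retain enough expansion throughout: since only the current path $Q$ is blocked (discarded expansion vertices can be reused), the cumulative forbidden size stays at most $|W| + D + |U_{3-i}| + |V(P_{3-i})| + O(\ell)$, which remains within the expander deletion budget $D \cdot \varepsilon(D)/4$ at every step under the hypothesis $|W| \le 2D/m^3$ and the quantitative choice $D \ge \varepsilon_2 \eta^2 / 10^{20}$. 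The $K_{s,t}$-free condition enters only indirectly through Lemma~\ref{adjuster1}, which provides the expansion-growth sub-routine used at each inductive step.
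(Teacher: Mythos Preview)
Your iterative approach has a genuine gap at the core step. You propose to ``grow a $(D,m^4)$-expansion $F$ around $w$'' at each iteration and then discard it, but after discarding, the new tip $w'$ is just an arbitrary vertex with no guaranteed structure in the residual graph. The deleted set at that point has size on the order of $D+\ell$, which (since $D,\ell\sim\eta^2\cdot\mathrm{const}=d^{s/(s-1)}\cdot\mathrm{const}\gg d$) can easily swallow all of $w'$'s $\ge d$ neighbours; the expander property itself only applies once a set reaches size $\varepsilon_2\eta^2/2\gg d$, so you cannot bootstrap from a bare vertex. Lemma~\ref{adjuster1} does not supply the routine you invoke: its proof builds $(D,2)$-expansions around \emph{specific} vertices lying on a short cycle inside a fresh sub-expander obtained from Lemma~\ref{average-degree}, not around a prescribed tip. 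A second, independent gap is the segment lower bound $m^3/2$: Lemma~\ref{diamter} only gives an upper bound on connecting-path length, so your cumulative length could badly undershoot $\ell$ after $\Theta(\ell/m^3)$ rounds.

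The paper's approach (Lemma~\ref{path11}) fixes both issues simultaneously with an extremal argument. It maintains, as an invariant, a $(D,2m^4)$-expansion $F_3$ attached to the current tip $v_3$ (this is condition \textbf{D2}); the expansion is never discarded but rather \emph{consumed} in the next extension. To extend, it does not try to expand around $v_3$: instead it deletes $W'=W\cup V(P_1\cup P_2)\cup V(F_3\cup F_4)$ (size $\le 4D$), applies Lemma~\ref{average-degree} to get average degree $\ge d/4$, passes to a subgraph $H'$ with $\delta(H')\ge d/8$, picks a \emph{fresh} vertex $v\in H'$, and uses the $K_{s,t}$-free condition via Corollary~\ref{coro2.6} to produce a new $(D,2)$-expansion $F_3'$ around $v$. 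Then the old expansion $F_3$ is used once, via Lemma~\ref{diamter}, to connect $v_3$ to $v$; the new tip is $v$ and it already carries $F_3'$. Length control is handled by maximising $\ell(P_1)+\ell(P_2)$ subject to $\le\ell+4m^4$ and showing the maximum is $\ge\ell$. Finally, rather than building two long paths symmetrically, the paper lets one path $P_1$ be the short direct connection from $U_1$ to $v_1$ (length $\le 3m^4$ via Lemma~\ref{diamter}) and loads all of $\ell$ onto $P_2$ via Lemma~\ref{path11}.
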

\begin{proof}[Proof of Lemma \ref{liu}]
We may assume that $n<3K\eta^2$, or else we can just take $H=G$. By Theorem \ref{turan-kst}, we have $e(G)=nd/2\le t^{1/s}n^{2-1/s}$ and as $\eta=d^{\frac{s}{2(s-1)}}$, it follows that $\eta^2\le (2t^{1/s})^{s/(s-1)}n$. Therefore 
\begin{equation}\label{vertexG}
n\ge\frac{1}{64t}\eta^2.
\end{equation}
This means that 
\begin{equation}\label{4x}
m =\log\frac{15n}{\varepsilon_2\eta^2}\ge \log\frac{15}{64t\varepsilon_2}\ge \max\{50et,  50/\varepsilon_1\},
\end{equation} 
where  the last inequality holds as $\varepsilon_2\ll\varepsilon_1, 1/t$. Let
\begin{equation}\label{maxdegree1}
\Delta:= \text{max}\left\{d/8, cdm ^{20s}\right\}
\end{equation}
and $Z=\{v\in V(G): { d(v)\ge\Delta}\}$.

First, assume that $|Z|\ge \frac{d}{16}\ge 4c\eta$. Recall that $G$ is bipartite.  By the pigeonhole principle, we choose a subset $Z_1\subseteq Z$ with  $2c\eta$ vertices in the same part of  $G$. We claim that for every  $v\in Z_1$, there exists a subset $S(v)\subseteq N(v)$ with $|S(v)|=\Delta/2$ such that $|N(w)\cap Z_1|\le d/\eta$ for every vertex $w\in S(v)$. Indeed, let $A$ be a subset of $N(v)$ such that $|N(u)\cap Z_1|\ge d/\eta+1$ for every $u\in A$. Let $B=Z_1\backslash \{v\}$. Since $G$ is a $K_{s, t}$-free graph, there is no copy of $K_{s-1, t}$ with $t$ vertices in $A$ and $s-1$ vertices in $B$. Hence, by  Corollary \ref{coro2.6} with $\delta=d/\eta$, we have 
\[
\left|Z_1\backslash \{v\}\right|=2c\eta-1\ge\frac{d}{et\eta }|A|^{\frac{1}{s-1}}, 
\]
which together with  \eqref{4x} and \eqref{maxdegree1}  yields that 
\[
|A| \le\left(\frac{\eta et(2c\eta-1)}{d}\right)^{s-1}\le\left(\frac{2c\eta^2et}{d}\right)^{s-1}= d(2cet)^{s-1} \le\Delta/2.
\] 
Thus, we can choose a set $S(v) \subseteq N(v) \backslash A$ with  $|S(v)|=\Delta/2$, as claimed.

Now, we  construct a  $\mathrm{T}K^{(\ell)}_{ c\eta}$ using some vertices in $Z_1$ as  branch vertices. Let   $\mathcal{P}$ be a maximum collection of paths, each of which has a length of $\ell$, satisfying the following rules.
\stepcounter{propcounter}
\begin{enumerate}[label = ({\bfseries \Alph{propcounter}\arabic{enumi}})]
\rm\item\label{rulepath1} Each path $P\in\mathcal{P}$ is a unique $(v_i, v_j)$-path for some $v_i, v_j\in Z_1$.

\rm\item\label{rulepath2} All paths in $\mathcal{P}$ are pairwise internally vertex disjoint, and the internal vertices of those paths are disjoint
from $Z_1$.
\end{enumerate}

In  fact, we find  paths in $\mathcal{P}$ greedily.  In the whole process, a vertex $v\in Z_1$ is called \emph{bad} if there are at least $\Delta/6$ vertices in $S(v)$ used in previous connections and is called \emph{good} otherwise. Let $W_1$ be the set of vertices used in all connections. Then
\begin{equation}\label{w1}
|W_1|\le\tbinom{2c\eta}{2}\cdot (m^{10}+1)\le 4c^2m^{10}\eta^2.
\end{equation}
Recall that for every vertex $v\in Z_1$, we have  $|N(w)\cap Z_1|\le d/\eta$ for every vertex $w\in S(v)$.
By (\ref{4x}) and (\ref{maxdegree1}), the number of bad vertices is at most 
\[
\frac{4c^2m^{10}\eta^2d/\eta}{\Delta/6}\le\frac{24cm^{10}\eta}{m^{20s}}\le c\eta. 
\]
This means that the number of good vertices in $Z_1$ is at least $c\eta$.

\begin{claim}\label{end1}
There exists a path in $\mathcal{P}$ between  every pair of good vertices in $Z_1$.
\end{claim}

\begin{proof}[{Proof of Claim \ref{end1}.}]
By contradiction, suppose that $v_1, v_2 $ are good vertices in $Z_1$ such that no path connects them in $\mathcal{P}$. Recall that $|S(v_i)|=\Delta/2$ for each $i\in[2]$. Let $S_i$ be the subset of $S(v_i)$ consisting of vertices not used in the previous connections. Then  $|S_i|\ge|S(v_i)|-\Delta/6=\Delta/3$. Let $S_i' : = N(S_i)\backslash \{v_i\}$. Note that  $G$ has no copy of $K_{s-1, t}$ with $t$ vertices in $S_i$ and $s-1$ vertices in $S_i'$. By  Corollary \ref{coro2.6} (with $(\delta, A, B)=(d/8-1, S_i, S_i')$), we have
\[
|S_i'|\ge\frac{(\Delta/3)^{\frac{1}{s-1}}(d/8-1)}{et}\ge\frac{d\Delta^{\frac{1}{s-1}}}{48et}.
\]
Then by the choice of $\varepsilon_2$, (\ref{4x}) and (\ref{maxdegree1}), we obtain
\begin{equation}\label{s11}
|S_i'|\ge \max\{c^2\eta^2m^{19}, 4\varepsilon_2\eta^2\}.
\end{equation}

Let $W_2=S_1\cup S_2$ and $W_0=W_1\cup W_2\cup Z_1$. Then  $|W_2|=|S_1\cup S_2|\le \Delta$ and $|W_0 |=|W_1\cup W_2\cup Z_1| \le2c^2m^{10}\eta^2+\Delta+c\eta\le4c^2m^{10}\eta^2 \le D/m^3$ by the choice of $c$. Applying Lemma \ref{adjuster1} with $W = W_0$, we know that $G-W_0$ has a $(D, m^4, 10m^4)$-adjuster, say $\mathcal{A} = (v_3, F_3, v_4, F_4, A)$, where $|A| \le 100m^8$, $\ell(\mathcal{A}) \le |A| + 1 \le 110m^8$ and $|V (F_3)| =|V (F_4)| = D$. Let $\ell'= \ell-10m^4-\ell(\mathcal{A})$. Then $0 \le \ell' \le \frac{m^{12}\eta^2}{20}$.

Note that $|W_0\cup V(F_3)\cup V(F_4)|\le2D+D/m^3$. By \eqref{s11}, we can find a subset $U_i\subseteq S_i'\backslash(W_0\cup V(F_3)\cup V(F_4))$ with $|U_i|\ge2D$ for each $i\in [2]$. So, there exists $U_i'\subseteq U_i$, $|U_i'|=D$ for each $i\in [2]$ and $U_1'\cap U_2'=\emptyset$. Note that $|A \cup W_0 | \le 100m^8 +D/m^3 \le 2D/m^3$.  By Lemma \ref{path1}, we can find a  $(u_1,v_3)$-path $P_1$ and a $(u_2,v_4)$-path $Q_1$ for some $u_i\in U_i'$ with $i\in [2]$ such that $V(P_1)\cap V(Q_1)=\emptyset$, and $\ell' \le \ell(P_1) + \ell(Q_1) \le \ell' + 8m^4$.
Obviously, we can extend $P_1$ to a $(v_1, v_3)$-path $P$ and $Q_1$  to a $(v_2, v_4)$-path $Q$, respectively, such that $V(P)\cap V(Q)=\emptyset$ and $\ell' \le \ell(P) + \ell(Q) \le \ell' + 10m^4$. By the choice of $\ell'$, $\ell(\mathcal{A}) \le \ell- (\ell(P) + \ell(Q)) \le \ell(\mathcal{A}) + 10m^4$. Since $v_1$, $v_2$ belong to the same part and $G$ is bipartite, $\ell(\mathcal{A})$ and $\ell-\ell(P)-\ell(Q)$ are congruential. Thus,  there is a $(v_3, v_4)$-path in $G[A\cup\{v_3, v_4\}]$ of length $\ell - \ell(P)- \ell(Q)$, denoted as $R$, and $P \cup R \cup Q$ is a $(v_1, v_2)$-path of length $\ell$, contradicting to the maximality of $\mathcal{P}$.
\end{proof}
By Claim \ref{end1}, we can find a $(v_1, v_2)$-path in $\mathcal{P}$ for every pair of good vertices $v_1$, $v_2$, which forms a $\mathrm{T}K^ {(\ell)}_{c\eta}$ in $G$.\\

Towards the case when $|Z|< \frac{d}{16}$, we focus on the subgraph $H:= G-Z$. 
We claim that $H$ is an $(\varepsilon_1/2, \varepsilon_2\eta^2)$-expander. Let $X\subseteq V(H)$ with $\varepsilon_2\eta^2/2\le |X|\le |H|/2$. Since $G$ is an $(\varepsilon_1, \varepsilon_2\eta^2)$-expander and $\varepsilon(x) \cdot x$ is increasing when $\varepsilon_2\eta^2/2\le x\le n/2$,  
\[
|N_G(X)| \ge |X| \cdot\varepsilon (|X|, \varepsilon_1, \varepsilon_2\eta^2)\ge \frac{\varepsilon_2\eta^2}{2}\cdot\frac{\varepsilon_1}{\log ^2(15/2)}\ge2|Z|.
\]
This implies that
\[
|N_{H}(X)| \ge|N_G(X)|-|Z|\ge |N_G(X)|/2\ge  |X| \cdot \varepsilon(|X|, \varepsilon_1/2, \varepsilon_2\eta^2), 
\]
and we are done.

It follows from $\delta(G)\ge d/8$ and $|Z|< \frac{d}{16}$ that $\delta(H) \ge \delta(G) - |Z| \ge \frac{d}{16}$. To complete the proof, it suffices to show $|H| \ge  K\eta^2$. We claim that
\begin{equation}\label{6x}
\Delta = cd m^{20s}> d/8. 
\end{equation}
Indeed, if $\Delta = d/8$, then as $\delta(G) \geq d/8$, we have $Z = \{v \in V(G) \mid d(v) \geq \Delta\} = V(G)$. This implies $|Z| \geq d/8$, a contradiction.
 By \eqref{6x} and the choice of $c$, we have 
\begin{equation*}\label{7x}
m^{20s}\ge \frac{1}{8c}\ge \left(\log\frac{30K}{\varepsilon_2}\right)^{20s},
\end{equation*} 
which together with \eqref{4x} yields that $n\ge2K\eta^2$. Thus, $|H|=n-|Z| \ge n/2\ge K\eta^2$.
\end{proof}

\subsubsection{Building adjusters}\label{sub:Building-adjuster}
In the subsection, we prove Lemma \ref{adjuster1}. First, we show that every  $K_{s, t}$-free graph $G$ contains a subgraph with average degree linear to $d(G)$ even after removing a small set of vertices.

\begin{lemma}\label{average-degree}
Suppose $d, t\ge s\ge2$ are integers and $d$ is sufficiently large. There exists $K$ such that the following holds for each $n$ and $d$ satisfying $n< 3K\eta^2$.  Let $G$ be an $n$-vertex $K_{s,t}$-free graph with $\delta(G) \ge d$. Then for any vertex set $W \subseteq V (G)$ with $|W|\leq \min\left\{\frac{\eta^2}{1000t^2},\frac n 2\right\}$, we have $d(G - W) \ge d/4$.
\end{lemma}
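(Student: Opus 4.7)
The plan is to show that removing $W$ destroys only a tiny fraction of the edges of $G$, so that $d(G-W)\ge d/4$ follows from the lower bound $e(G)\ge dn/2$.

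First, I would invoke the standard K\H{o}v\'{a}ri--S\'{o}s--Tur\'{a}n counting argument: since $G$ is $K_{s,t}$-free on $n$ vertices, double-counting pairs $(v,S)$ with $v\in V(G)$ and $S\in\binom{V(G)}{s}$, $S\subseteq N_G(v)$, yields $\sum_{v\in V(G)}\binom{d_G(v)}{s}\le(t-1)\binom{n}{s}$, because each $s$-set $S$ lies in at most $t-1$ common neighborhoods. Restricting the sum to $W$ and applying Jensen's inequality to the convex function $\binom{x}{s}$ gives $|W|\binom{\bar d}{s}\le(t-1)\binom{n}{s}$, where $\bar d:=|W|^{-1}\sum_{v\in W}d_G(v)$. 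Either $\bar d<2s$ (in which case $\sum_{v\in W}d_G(v)<2s|W|$), or $\binom{\bar d}{s}\ge(\bar d/2)^s/s!$ lets me solve for $\bar d\le 2t^{1/s}n/|W|^{1/s}$. In both cases, $\sum_{v\in W}d_G(v)\le 2t^{1/s}n|W|^{1-1/s}+2s|W|$, and this upper-bounds $e(G)-e(G-W)$, the number of edges incident to $W$.

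Next, I would substitute $|W|\le\eta^2/(1000t^2)$ and use the defining identity $\eta^{2(s-1)/s}=d$ (which follows directly from $\eta=d^{s/(2(s-1))}$): this gives $|W|^{(s-1)/s}\le d/(1000t^2)^{(s-1)/s}$. Hence the edges incident to $W$ are at most $\frac{2t^{1/s}}{(1000t^2)^{(s-1)/s}}\cdot dn+2s|W|$. For $s,t\ge 2$ the leading coefficient equals $\frac{2}{1000^{(s-1)/s}\cdot t^{(2s-3)/s}}$, which is bounded above by $2/\sqrt{1000}<1/15$. The additive term $2s|W|\le s\eta^2/(500t^2)$ is negligible relative to $dn$ once $d$ is sufficiently large, using the lower bound $n\ge\eta^2/(2t^{1/s})^{s/(s-1)}$ obtained from Theorem~\ref{turan-kst} applied to $G$ itself. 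Combining everything and using $e(G)\ge dn/2$ and $n-|W|\le n$, I conclude $d(G-W)=2e(G-W)/(n-|W|)\ge 2(dn/2-dn/15-\text{negl.})/n>4d/5\ge d/4$.

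The main (mild) obstacle is simply verifying that the constants line up: the factor $1000t^2$ in the bound on $|W|$ is precisely tuned to absorb the $t^{1/s}$ coming out of K\H{o}v\'{a}ri--S\'{o}s--Tur\'{a}n and exploit the identity $\eta^{2(s-1)/s}=d$, yielding an edge-loss of the form (small constant)$\cdot dn$. One should also check the small-$\bar d$ case separately, which is trivial. Interestingly, the upper bound $n<3K\eta^2$ in the hypothesis plays no role in the argument; it appears only so that the lemma matches the regime in which it is invoked inside the proof of Lemma~\ref{liu}.
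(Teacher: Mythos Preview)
Your proposal is correct and takes a genuinely different route from the paper. The paper bounds the cross-edge count $e(V(G)\setminus W,W)$ via F\"uredi's Zarankiewicz inequality (Theorem~\ref{1996furedi}), then writes $2e(G-W)=\sum_{v\notin W}d_G(v)-e(V(G)\setminus W,W)$ and divides by $n-|W|$. You instead bound the total degree $\sum_{v\in W}d_G(v)$ by the raw K\H{o}v\'{a}ri--S\'{o}s--Tur\'{a}n double count restricted to $W$, and subtract this from $e(G)\ge dn/2$. Both arrive at an edge loss of the form (small constant)${}\cdot dn$ through the same identity $\eta^{2(s-1)/s}=d$, so the arithmetic endgames are parallel.

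The substantive difference is that F\"uredi's bound contributes the term $(s-1)(n-|W|)^{2-2/s}$, which after dividing by $n-|W|$ becomes $(s-1)(n-|W|)^{1-2/s}$; for $s\ge 3$ this grows with $n$ and is controlled only via the hypothesis $n<3K\eta^2$, yielding the $c_2 d^{(s-2)/(s-1)}$ term in the paper. Your approach sidesteps this entirely, so your remark that $n<3K\eta^2$ plays no role is correct for \emph{your} argument but would be false for the paper's. A minor point: your Jensen step for $\binom{x}{s}$ is justified here because $\delta(G)\ge d$ forces every $d_G(v)\ge d\gg s$, and $\binom{x}{s}$ is convex on $[s-1,\infty)$; the $\bar d<2s$ case is thus vacuous, though harmless to include.
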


\begin{proof}
We can assume   $|W|>3d/4$, otherwise, it is trivial.  Note that 
\[
2e(G-W)+e\left(V(G)\setminus W,W\right)=\sum_{v\in V(G)\setminus W}d_G(v)\ge d(n-|W|),
\]
On the other hand, by Theorem \ref{1996furedi}, we have
\begin{align*}
e\left(V(G)\setminus W,W\right)&\le Z(|W|, n-|W|, s, t)\\
&\le(t-s+1)^{1/s}|W|(n-|W|)^{1-1/s}+(s-1)(n-|W|)^{2-2/s}+(s-2)|W|.
\end{align*}
Therefore,
\begin{align*}
2e(G-W)\ge d(n-|W|)-(t-s+1)^{1/s}|W|(n-|W|)^{1- 1/s}-(s-1)(n-|W|)^{2-2/s}-(s-2)|W|. 
\end{align*}
So, the average degree of $G-W$ satisfies that  
\begin{align}\label{average-G-W}
d(G-W)\ge d-(t-s+1)^{1/s}|W|(n-|W|)^{- 1/s}-(s-1)(n-|W|)^{1-2/s}-(s-2)|W|(n-|W|)^{-1}. 
\end{align}

By Theorem \ref{turan-kst}, we have $n\ge \frac{1}{64t}\eta^2.$ Note that the right hand of \eqref{average-G-W} is a decreasing function of $|W|$. Using the fact   $|W|\leq \min\left\{\frac{\eta^2}{1000t^2},\frac n 2\right\}$ and $n<3K\eta^2$, we can deduce that 
\begin{equation}\label{equ:ave}
\begin{aligned}
d(G-W) &\ge d - (t-s+1)^{1/s}|W|(n-|W|)^{-1/s} - (s-1)nd^{1-2/s} - (s-2)|W|(n-|W|)^{-1} \\
&\ge d -(t-s+1)^{1/s} \frac{\eta^2}{1000t^2}\left(\frac{3K\eta^2}{2}\right)^{-1/s}-3(s-1)K\eta^2d^{1-2/s}-(s-2) \frac{\eta^2}{1000t^2}\left(\frac{3K\eta^2}{2}\right)^{-1}\\
&\ge d - c_1d - c_2d^{\frac{s-2}{s-1}} - (s-2)\cdot\frac{2}{1000t^2\cdot3K}\\
\end{aligned}
\end{equation}
where 
\[
c_1=\frac{2(t-s+1)^{1/s}\frac{1}{1000t^2}}{\left(\frac{1}{64t}\right)^{1/s}}<\frac{128}{1000} \quad \quad\text{ and } \quad \quad c_2=(s-1)(3K)^{1-2/s}. 
\]

Since $c_1 < \frac{128}{1000}$, we conclude that
\[
d(G-W) \ge d - c_1 d - c_2 d^{1-2/s} - O(1) \ge d/4
\]
holds for all sufficiently large $d$.
\end{proof}

Now we prove Lemma \ref{adjuster1} by induction on $r$. 

\begin{proof}[{Proof of Lemma \ref{adjuster1}}]  
Let $G$ be an $n$-vertex $K_{s,t}$-free  $(\varepsilon_1, \varepsilon_2\eta^2)$-expander with $\delta(G) \ge d$ and  $n<3K\eta^2$. First, we claim that for any $W'\subseteq V(G)$ with $|W'| \le 4D$,  $G - W'$ contains a $(D, m^4, 1)$-adjuster. It follows from $n<3K\eta^2$ that $m\le \log (45K/\varepsilon_2)$. 

Since $|W'| \le 4D$ and $D = \max\left\{\frac{c^2 m^{19}\eta^2}{10^{20}},\frac{\varepsilon_2\eta^2}{10^{20}}\right\}$, we have $|W'| \le \max\left\{\frac{4c^2 m^{19}\eta^2}{10^{20}},\frac{4\varepsilon_2\eta^2}{10^{20}}\right\}$. By the choice $1/d\ll c\ll 1/K\ll\varepsilon_2\ll\varepsilon_1, 1/s, 1/t$, it holds that $|W'| \le \min\left\{\frac{\eta^2}{1000t^2},\frac{n}{2}\right\}$. Therefore, by Lemma \ref{average-degree},  $d(G-W') \ge \frac{d}{4}$. By Theorem \ref{subgraph-expander}, there exists a bipartite $(\varepsilon_1, \varepsilon_2\eta^2)$-expander $H= G-W'$ with $\delta(H) \ge \frac{d}{32}$. Using  Lemma \ref{diamter}, we can find a cycle $C$ in $H$ of length $2r$ such that $2r\le \frac{m^4}{32}$. Pick two vertices $v_1$, $v_2 \in V (C)$ of distance $r-1$ in  $C$. For each $i\in [2]$, using the minimum degree condition of $H$, we can find a subset $A_i\subset N_{H-C}(v_i)$ of size $\frac{d}{600}$ such that $A_1\cap A_2=\emptyset$. Let $B_1=N_{H-C}(A_1)\backslash  A_2$ and $B_2=N_{H-C}(A_2)\backslash  A_1$. Note that $G[A_i, B_i]$ does not contain a copy of $K_{s-1, t}$ with $t$ vertices in $A_i$ and $s - 1$ vertices in $B_i$. For each $v\in A_i$, there are at least $\frac{1}{32}d-|C|-|A_1|-|A_2|\ge \frac{1}{64}d$ neighbors in $B_i$. Now we can use Corollary \ref{coro2.6} (with $\delta: =\frac{d}{64}$) to get that 
\[
|B_i|\ge\frac{1}{et}\left(\frac{d}{64}\right)\left(\frac{d}{600}\right)^{\frac{1}{s-1}}\ge 2D.
\]
For each $i\in[2]$, we can choose subsets $B_i'\subset B_i$ with $|B_i'|=D$ such that $B'_1\cap B'_2=\emptyset$.  Thus, we  find two vertex-disjoint $(D, 2)$-expansion of $v_1$ and $v_2$ respectively,  say $F_1$, $F_2$. We conclude that $v_1, v_2, F_1, F_2, C$ forms  a $(D, m^4, 1)$-adjuster.

Now we complete our proof by induction on $r$. It is true when $r=1$ by the above arguments. For $1 < r< \frac{\eta ^2m^{12}}{20}$, suppose that there exists a  $(D, m^4, r)$-adjuster in $G-W$, say  $\mathcal{A}_1 = (v_1, F_1, v_2, F_2, A_1)$.  Let $W_1 = W \cup A_1\cup V (F_1)\cup V (F_2)$. Then $|W_1| \le 4D$. By the above claim, $G - W_1$ contains a $(D, m^4, 1)$-adjuster, say $\mathcal{A}_2= (v_3, F_3, v_4, F_4, A_2)$. We want to use Lemma \ref{diamter} to find a path $P$ connecting $V (F_1)\cup V (F_2)$ and $V (F_3)\cup V (F_4)$ avoiding $W \cup A_1\cup A_2$. Note that $|V(F_1)\cup V(F_2)| = |V(F_3)\cup V(F_4)|= 2D$, and $|W \cup A_1\cup A_2| \le 2D/m^3  + 10rm^4+ 10rm^4 \le 2D/m^3  + 20rm^4  \le 3D/m^3$ by the fact $r< \frac{\eta ^2m^{12}}{20}$. Recall that $\varepsilon(2D):=\varepsilon(2D,\varepsilon_1,k)$is decreasing when $2D\ge k$, where $\varepsilon$ is as in Definition \ref{subk}. Then we have
\begin{equation*}
\varepsilon(2D)\cdot 2D/4\ge  \varepsilon(n)\cdot 2D/4=\varepsilon_1/m^2\cdot 2D/4\ge 3D/m^3\ge |W \cup A_1 \cup A_2|.
\end{equation*}
Thus, by Lemma \ref{diamter}, there exists a path $P$ of length at most $m^4$ from $V (F_1)\cup V (F_2)$ to $V (F_3)\cup V (F_4)$ avoiding $W \cup A_1\cup A_2$. Without loss of generality, we may assume that $P$ is a path from $V (F_1)$ to $V (F_3)$. It follows from $F_1$ and $F_3$ are $(D, m^4)$-expansions of $v_1$ and $v_3$, respectively,  that  $P$ can be extended to a $(v_1, v_3)$-path, denoted by $Q$, of length at most $3m^4$ via the interiors of $F_1$ and $F_3$.

We claim that $(v_2, F_2, v_4, F_4, A_1 \cup A_2 \cup V (Q))$ is a $(D, m^4, r+ 1)$-adjuster. It is easy to check that \ref{Ad1} and \ref{Ad2} hold. For \ref{Ad3}, we have $V(F_2)\cap V(F_4)\cap (A_1 \cup A_2 \cup V (Q))=\emptyset$ and $|A_1\cup A_2\cup V (Q)| \le 10rm^4+10rm^4+3m^4 \le 10(r+1)m^4$. For \ref{Ad4}, let $\ell = \ell(\mathcal{A}_1)+\ell(\mathcal{A}_2)+\ell(Q)$. For every $i \in \{0, 1, \ldots, r + 1\}$, there is some $i_1 \in \{0, 1, \ldots, r\}$ and $i_2 \in \{0, 1\}$ such that $i = i_1 + i_2$. Let $P_1$ be a $(v_1, v_2)$-path in $G[A_1 \cup \{v_1, v_2\}]$ of length $\ell(A_1) + 2i_1$ and  $P_2$ be a $(v_3, v_4)$-path in $G[A_2 \cup \{v_3, v_4\}]$ of length $\ell(A_2) + 2i_2$. Hence, $P_1 \cup Q \cup P_2$ is a $(v_2, v_4)$-path in $G[A_1 \cup A_2 \cup V (Q)]$ of length $\ell + 2i$. This completes the proof of Lemma \ref{adjuster1}.
\end{proof}

\subsection{Proof of Lemma \ref{yang}}\label{sec5}

To construct a balanced subdivision as in Lemma~\ref{yang}, our approach follows the strategy of Luan, Tang, Wang and Yang \cite{2023yang} which gives a more efficient construction of reasonable-sized adjusters (see Lemma \ref{adjuster2}) by concatenating many disjoint small adjusters. Our construction mainly consists of two steps: (1) build a sufficient number of units whose interiors are pairwise disjoint (see Lemma \ref{unit}); (2) for every pair of units, connect their core vertices, one by one, by a unique path of a fixed length $\ell$ whilst avoiding previous connections. To achieve (2), we use adjusters: we first build an adjuster which is disjoint from previous connections and the interiors of all units (use Lemma \ref{adjuster2}), and then join the adjuster to the exteriors of the two units via two disjoint paths so that the sum of their lengths is close to $\ell$ (see Lemma~\ref{path2}). Finally, we use the property of the adjuster to obtain a desired connection.

The following lemma shows that we can find a desired unit after deleting a small number of vertices in a $K_{s,t}$-free  bipartite expander. The proof is similar as that of Lemma 3.8 in \cite{2023yang}, so we include it in Appendix \ref{unit-le}.

\begin{lemma}\label{unit}
Suppose $1/n, 1/d\ll c_1\ll1/K\ll \varepsilon_1, \varepsilon_2, 1/s, 1/t$, and $t, s\in \mathbb{N}$ satisfy $t\ge s\ge2$ and $d\ge \log^{200}n$. Let $G$ be an $n$-vertex $K_{s,t}$-free  bipartite $(\varepsilon_1, \varepsilon_2\eta^2)$-expander with $\delta(G) \ge d$ and $n \ge K\eta^2$. If  $W$ is a subset of $V (G)$ with $|W|\le4c_1\eta^2m^{20}$, then  $G-W$ contains a $(c_1\eta, m^{20}, c\eta, 2m^4)$-unit.
\end{lemma}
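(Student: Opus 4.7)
The plan is to construct the unit in three stages. Stage~(a) establishes a large ``good'' region $V_1\subseteq V(G)\setminus W$ in which $G-W$ retains both a high fraction of the minimum degree and a useful sublinear expansion. Stage~(b) greedily builds $c_1\eta$ vertex-disjoint $(m^{20},c\eta)$-hubs centered at distinct vertices $u_1,\ldots,u_{c_1\eta}\in V_1$, the $K_{s,t}$-freeness being used to guarantee enough private neighborhoods for the outer layers. Stage~(c) connects all hub centers to a common core vertex $v\in V_1$ through internally disjoint $(v,u_j)$-paths of length at most $2m^4$ in $G-W$ via Lemma~\ref{diamter}.

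For stage~(a), set $V_1:=\{v\in V(G)\setminus W:d_{G-W}(v)\ge d/2\}$ and apply Theorem~\ref{1996furedi} to bound $e_G(V(G)\setminus W,W)\le(t-s+1)^{1/s}|W|n^{1-1/s}+(s-1)n^{2-2/s}+(s-2)|W|$; combined with the hierarchy $c_1\ll 1/K$, the bounds $d\ge\log^{200}n$ and $n\ge K\eta^2$ (which force $m\le d^{1/s'}\le d^{1/200}$ and hence $m^{20}\le d^{1/10}$), this yields $|V_1|\ge n/2$. The expansion property of $G$ then transfers to $G-W$ on subsets of $V_1$ of size at least $\varepsilon_2\eta^2/2$, giving what amounts to an $(\varepsilon_1/2,\varepsilon_2\eta^2)$-expansion on $V_1$, analogously to how Lemma~\ref{liu} extracts an expander after deleting a set of ``bad'' vertices. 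For stage~(b), the $j$-th hub is built by picking $u_j\in V_1$ avoiding all used vertices and selecting $S_1(u_j)\subseteq N_{G-W}(u_j)\cap V_1$ of size $m^{20}$. For each $z\in S_1(u_j)$, the $K_{s,t}$-freeness of $G$ gives that at most $(t-1)\binom{m^{20}-1}{s-1}\le(t-1)m^{20(s-1)}$ of $z$'s neighbors are shared with any other vertex of $S_1(u_j)$; since $d\ge\log^{200}n\gg m^{20(s-1)}$, the vertex $z$ has at least $d/4$ private neighbors, and after further subtracting $W$ and the running used set of size $O(c_1\eta^2m^{20})$ (the local intersection with $N(z)$ being controlled by a second K\H{o}v\'{a}ri--S\'{o}s--Tur\'{a}n estimate), at least $c\eta$ candidates remain for $S_1(z)$ when $c$ is chosen small enough in the hierarchy.

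For stage~(c), since the accumulated used set has size at most $|W|+O(c_1\eta^2m^{20})$ and the expansion parameters of $V_1$ remain usable for sets of size $\Omega(\varepsilon_2\eta^2)$, Lemma~\ref{diamter} produces $(v,u_j)$-paths of length at most $(2/\varepsilon_1)\log^3(15n/(\varepsilon_2\eta^2))\le m^4\le 2m^4$ avoiding all used vertices. The principal obstacle is stage~(a): the bound $|W|\le 4c_1\eta^2m^{20}$ can substantially exceed $d$ in absolute size, so a naive pigeonhole on $W$-degrees is insufficient. The decisive ingredient is Theorem~\ref{1996furedi}, which for $K_{s,t}$-free graphs controls $e_G(V(G)\setminus W,W)$ tightly enough to make $V_1$ of near-maximum size and preserve the expansion down to threshold $\varepsilon_2\eta^2$; the remaining work is careful bookkeeping of the greedy hub construction in stage~(b).
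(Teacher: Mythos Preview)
Your Stage~(a) contains a fatal gap: the claim that expansion transfers to $G-W$ at threshold $\varepsilon_2\eta^2$ is false. Recall that $m=\log(15n/(\varepsilon_2\eta^2))$ is \emph{not} bounded by any constant in the hierarchy; it can be as large as roughly $\log n\le d^{1/200}$. For a set $X$ with $|X|\sim\varepsilon_2\eta^2$, the expansion guarantee gives only $|N_G(X)|\ge\varepsilon(|X|)|X|=\Theta(\eta^2)$, whereas $|W|$ may be as large as $4c_1\eta^2m^{20}$. Since $c_1$ is a fixed constant and $m^{20}$ is unbounded, one has $|W|\gg|N_G(X)|$ for all sufficiently large $m$, so removing $W$ can destroy the expansion of small sets entirely. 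The analogy you draw with Lemma~\ref{liu} breaks down because there the deleted set $Z$ has size $<d/16\ll\varepsilon_2\eta^2$, far below the threshold.

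This gap cascades into Stage~(c). To run Lemma~\ref{diamter} you need two sets each of size $x\ge\varepsilon_2\eta^2$, and you may avoid at most $x\cdot\varepsilon(x)/4=O(x/m^2)$ vertices. A single core vertex $v$ has $|N(v)|\le d<\varepsilon_2\eta^2$ (for $s\ge 2$ and $d$ large), so it is not even a legal starting set; its second neighbourhood is $O(\eta^2)$, which still allows avoiding only $O(\eta^2/m^2)$ vertices---nowhere near the $\Theta(\eta^2m^{20})$ you must avoid (namely $W$ together with the hubs). So you cannot route $c_1\eta$ internally disjoint $(v,u_j)$-paths this way.

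The paper's proof sidesteps both issues. It never tries to preserve expansion in $G-W$; it only preserves \emph{average degree} (Lemma~\ref{average-degree2}), which is enough to greedily plant hubs. The key idea you are missing is redundancy: the paper builds $m^{32}$ candidate core-hubs $H(w_i)$ (each a $(2c_1\eta,2c_1\eta)$-hub) and $\eta m^{32}$ subsidiary $(2m^{20},2c_1\eta)$-hubs $H(u_j)$, all disjoint from $W$. The connection step (Claim~\ref{unit2}) is then carried out in the \emph{original} expander $G$, with the sets being the \emph{unions} of these hubs, of size $\Omega(\eta^2m^{32})$; now Lemma~\ref{diamter} tolerates an avoided set of size $\Omega(\eta^2m^{28})\gg 4c_1\eta^2m^{20}\ge|W|$. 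A pigeonhole over the $m^{32}$ candidate cores then yields one $w_i$ connected to $c_1\eta$ of the $u_j$'s. Amplifying the connecting sets by a polylog factor in $m$ is precisely what makes the avoidance of $W$ possible; your single-core approach cannot achieve this.
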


Lemma \ref{adjuster2} provides us with a desired adjuster, whose proof can be found in   Appendix \ref{3.12-le}.
\begin{lemma}\label{adjuster2}
Suppose $1/n, 1/d\ll 1/K\ll \varepsilon_1, \varepsilon_2, 1/s, 1/t$ and $t, s\in \mathbb{N}$ satisfies $t\ge s\ge2$, and $d\ge \log^{200}n$. Let $D=\frac{\eta^2m^{20}}{10^{10}t}$, and $G$ be an $n$-vertex $K_{s,t}$-free  bipartite $(\varepsilon_1, \varepsilon_2\eta^2)$-expander with $\delta(G) \ge d$ and $n \ge K\eta^2$.  For any set  $W\subseteq V(G)$ with $|W| \le \frac{D}{ \log^3n/\eta^2}$, we get that $G-W$ contains a $(D, m^4, r)$-adjuster for any positive integer $r$ with $r \le\frac{\eta^2m^{12}}{20}$.
\end{lemma}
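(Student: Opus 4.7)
The plan is to prove Lemma \ref{adjuster2} by induction on $r$, with a structure parallel to the proof of Lemma \ref{adjuster1} but using Lemma \ref{unit} in place of the direct $K_{s,t}$-free/minimum-degree construction. Lemma \ref{unit} is the appropriate tool in the dense regime $n \ge K\eta^2$ and $d \ge \log^{s'}n$ because it simultaneously packages (i) a large set at bounded distance from a single core vertex (which yields the $(D, m^4)$-expansions we need) and (ii) a thick exterior that is well suited to being connected by Lemma \ref{diamter}.

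For the base case, namely the existence of a simple $(D, m^4, 1)$-adjuster in $G - W'$ for any $W' \subseteq V(G)$ with $|W'| \le 4D$, I would first apply Lemma \ref{unit} to $G - W'$ (noting $4D \le 4c_1\eta^2 m^{20}$) to obtain a $(c_1\eta, m^{20}, c\eta, 2m^4)$-unit $M_1$ with core $v_1$. Since $|\mathsf{Int}(M_1)| = O(c_1\eta^2 m^{20})$, Lemma \ref{unit} applied once more to $G - W' - \mathsf{Int}(M_1)$ produces a second unit $M_2$ with core $v_2$ and $\mathsf{Int}(M_2) \cap \mathsf{Int}(M_1) = \emptyset$. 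Each unit contains at least $h_0 h_1 h_2 = cc_1\eta^2 m^{20} \ge D$ vertices at bounded distance from its core, so by Proposition \ref{3.4} we extract $(D, m^4)$-expansions $F_1, F_2$ of $v_1, v_2$ inside the respective units (possibly after a harmless rescaling of the unit's internal parameters so that the distance bound $h_3 + 2$ matches $m^4$). Since $G$ is bipartite, all $(v_1, v_2)$-paths share a common parity; using that $\mathsf{Ext}(M_1)$ and $\mathsf{Ext}(M_2)$ are large, Lemma \ref{diamter} yields a short connecting path between them that avoids $V(F_1) \cup V(F_2)$, which we extend through the hub structures to a $(v_1, v_2)$-path $P$ of length $\ell \le m^4/32$. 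To obtain a second internally disjoint $(v_1, v_2)$-path of length $\ell + 2$, I would exploit the multi-hub structure ($h_0 = c_1\eta$ independent hubs per unit): rerouting $P$ through a different hub shifts its length by a small even amount, and any residual adjustment of size two is absorbed by a short even ear located in the remaining expander via Lemma \ref{diamter}. The two paths together with $F_1, F_2$ and the center set $A$ given by their internal vertices form the desired simple adjuster.

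For the inductive step, suppose $G - W$ contains a $(D, m^4, r)$-adjuster $\mathcal{A}_1 = (v_1, F_1, v_2, F_2, A_1)$ for some $r < \eta^2 m^{12}/20$. Set $W_1 = W \cup A_1 \cup V(F_1) \cup V(F_2)$; using $|A_1| \le 10 r m^4$, $|W| \le D/\log^3(n/\eta^2)$, and the bound on $r$ together with the definition of $D$, one checks $|W_1| \le 4D$. Applying the base case in $G - W_1$ yields a simple adjuster $\mathcal{A}_2 = (v_3, F_3, v_4, F_4, A_2)$. Since $|V(F_1) \cup V(F_2)| = |V(F_3) \cup V(F_4)| = 2D$ and $|W \cup A_1 \cup A_2| \le 2D/m^3$, the expansion inequality $\varepsilon(2D)\cdot 2D/4 \ge 2D/m^3$ combined with Lemma \ref{diamter} produces a path of length at most $m^4$ from $V(F_1) \cup V(F_2)$ to $V(F_3) \cup V(F_4)$ avoiding $W \cup A_1 \cup A_2$. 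Without loss of generality this path joins $V(F_1)$ to $V(F_3)$, and extending through the two expansions we obtain a $(v_1, v_3)$-path $Q$ of length at most $3m^4$. Then $(v_2, F_2, v_4, F_4, A_1 \cup A_2 \cup V(Q))$ satisfies A1--A3 directly; for A4, for each $i \in \{0, \ldots, r+1\}$ write $i = i_1 + i_2$ with $i_1 \in \{0, \ldots, r\}$ and $i_2 \in \{0, 1\}$, and concatenate the corresponding $(v_1, v_2)$-path in $\mathcal{A}_1$, the path $Q$, and the corresponding $(v_3, v_4)$-path in $\mathcal{A}_2$ to produce a $(v_2, v_4)$-path of length $\ell(\mathcal{A}_1) + \ell(Q) + \ell(\mathcal{A}_2) + 2i$.

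The hardest step is the base case, specifically producing two internally disjoint $(v_1, v_2)$-paths whose lengths differ by exactly two. In Lemma \ref{adjuster1} this came almost for free from a short even cycle found by Lemma \ref{diamter} in a bipartite expander subgraph of controlled minimum degree, with $v_1, v_2$ freely chosen on that cycle. Here, however, the cores $v_1, v_2$ are forced on us by Lemma \ref{unit} and cannot be prescribed, so the even cycle through both cores has to be built after the units are fixed, by carefully combining the two units' exteriors (via Lemma \ref{diamter}), the multi-hub flexibility of both units for length adjustment, and a short even ear to correct parity or close a gap of $2$. The remaining parameter bookkeeping and the choice of where to insert the ear follow the line of attack for analogous adjuster constructions in \cite{2023liu, 2023yang}.
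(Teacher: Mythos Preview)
Your inductive step is essentially the same as the paper's and is fine. The problem is the base case.

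Your plan is to take two units $M_1,M_2$ with cores $v_1,v_2$, extract $(D,m^4)$-expansions from them, and then manufacture two internally disjoint $(v_1,v_2)$-paths whose lengths differ by exactly~$2$. But neither of the two mechanisms you propose delivers that. Rerouting through a different hub of $M_1$ changes the internal $(v_1,u_{1,j})$-path to another one of length \emph{at most} $2m^4$, so the length shift is an uncontrolled even number in $[-4m^4,4m^4]$; moreover the exit point in $\mathsf{Ext}(M_1)$ moves, so you also have to rebuild the external connection, whose length you do not control either. The ``short even ear'' idea is not a routine use of Lemma~\ref{diamter}: to change the length of one path by exactly~$2$ while keeping it internally disjoint from the other, the ear must be anchored at a prescribed vertex on that path and avoid both $F_1,F_2$, the other path, and $W$; Lemma~\ref{diamter} only gives you a short path between two large sets, not an ear of length exactly~$2$ rooted at a single point. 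In short, once $v_1,v_2$ are forced on you rather than chosen on a pre-existing even cycle, the ``differ by $2$'' property --- which is the entire content of a simple adjuster --- does not come for free, and your sketch does not supply it. The papers you cite for this step (\cite{2023liu,2023yang}) do not do what you describe.

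The paper's base case (Lemma~\ref{largee}) is organised very differently and circumvents exactly this difficulty. It first finds $m^{240}$ pairwise disjoint \emph{small} simple adjusters of size $\eta^2/(6000t)$: each is obtained by passing to a bipartite expander in $G-W'$ (via Lemma~\ref{average-degree2} and Corollary~\ref{coro}), taking a shortest cycle $C$, choosing $v_1,v_2$ \emph{on} $C$ at distance $r-1$, and building $(\eta^2/(6000t),2)$-expansions around them with Corollary~\ref{coro2.6}. Here the length-differ-by-$2$ property is automatic from the cycle. The remaining work is to grow the ends from size $\eta^2/(6000t)$ up to $D=\eta^2 m^{20}/(10^{10}t)$. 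For this the paper introduces an \emph{octopus}: a core small adjuster together with $600m^{20}$ other small adjusters each linked to one end of the core by a short path. Two claims (Claims~\ref{adjuster3} and~\ref{adjuster4}), proved by repeated pigeonholing over connections supplied by Lemma~\ref{diamter}, produce $m^{40}$ such octopuses attached to the $R$-ends, and then a second round attaches a fresh octopus to the $L$-end of some core $A_k$. The union of the $\Theta(m^{20})$ small adjuster ends in each octopus is a $(D,m^4/2)$-expansion of the corresponding core vertex, and together with the center set of $A_k$ this gives the desired $(D,m^4/2,1)$-adjuster. This is precisely the construction from~\cite{2023yang} that you allude to, but it operates on many small adjusters, not on two units.
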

The following lemma helps us to connect two pairs of vertex sets even after deleting some vertices. We leave its proof in Appendix \ref{3.7-le}.
\begin{lemma}\label{path2}
Suppose $1/n, 1/d\ll 1/K\ll \varepsilon_1, \varepsilon_2, 1/s, 1/t$, and $t, s\in \mathbb{N}$ satisfies $t\ge s\ge2$ and $d\ge \log^{200}n$. Let $D=\frac{\eta^2m^{20}}{10^{10}t}$, $\ell\le\frac{\eta^2m^{12}}{20}$ and $G$ be an $n$-vertex $K_{s,t}$-free bipartite $(\varepsilon_1, \varepsilon_2\eta^2)$-expander with $\delta(G) \ge d$ and $n\ge K\eta^2$. Let $W$ be a subset of $V(G)$ with $|W| \le \frac{D}{ \log^3 n/\eta^2}$, $U_i \subseteq V (G)-W$ be disjoint vertex sets of size at least $D$ for each $i \in [2]$, and $F_j \subseteq G-W -U_1-U_2$ be vertex-disjoint $(D, m^4)$-expansion of $v_j$ for each  $j\in\{3, 4\}$. Then, $G-W$ contains vertex-disjoint paths  $P_1$ and $P_2$ with $\ell\le \ell(P_1) + \ell(P_2) \le \ell + 10m^4$ such that both $P_1$ and $P_2$ connect $\{v_1, v_2\}$ to $\{v_3, v_4\}$ for some $v_i \in U_i$ with $i \in [2]$.
\end{lemma}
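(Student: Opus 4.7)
My plan is to parallel the construction behind Lemma \ref{path1}, rebalancing the vertex-budget bookkeeping for the denser regime $n \ge K\eta^2$, $d \ge \log^{s'} n$. The central tool is Lemma \ref{diamter}: in an $(\varepsilon_1,\varepsilon_2\eta^2)$-expander, any two vertex sets of size $\ge \varepsilon_2\eta^2$ are joined by a path of length at most $\frac{2}{\varepsilon_1}m^3$, and this persists after deletion of up to $x\cdot\varepsilon(x)/4$ arbitrary vertices, where $x$ is the minimum of the two set sizes. Since $|U_i|\ge D$ and $|V(F_j)|=D$ with $D=\eta^2 m^{20}/(10^{10}t)$, and the input forbidden set satisfies $|W|\le D/\log^{3}(n/\eta^2)$, the hypotheses of Lemma \ref{diamter} are comfortably met, with plenty of deletion budget available at every subsequent stage of the construction.

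First I would build a baseline pair of paths. Apply Lemma \ref{diamter} inside $G-W-U_2$ to connect $U_1$ to $V(F_3)\cup V(F_4)$ by a path $Q_1$ of length at most $\frac{2}{\varepsilon_1}m^3$; by relabelling we may assume $Q_1$ terminates at some $w_3\in V(F_3)$. Apply the lemma again in $G-W-U_1-V(F_3)-V(Q_1)$ to connect $U_2$ to $V(F_4)$ by a short path $Q_2$ ending at some $w_4\in V(F_4)$. The $(D,m^4)$-expansion structure of $F_j$ supplies an internal $(w_j,v_j)$-path of length at most $m^4$; gluing yields vertex-disjoint $(u_1,v_3)$- and $(u_2,v_4)$-paths of total length at most $\frac{4}{\varepsilon_1}m^3+2m^4\le 3m^4$. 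This already settles the statement when $\ell\le 2m^4$.

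For larger $\ell$, up to $\eta^2 m^{12}/20$, I would pad the paths with ``detour'' segments routed through fresh portions of $G$. Concretely, plan a sequence of waypoint sets of size $\ge\varepsilon_2\eta^2$ in the unused part of $G$ and repeatedly apply Lemma \ref{diamter} to string them into one of the two paths; each segment contributes at most $\frac{2}{\varepsilon_1}m^3$ to the running length while consuming at most that many vertices. Roughly $\ell/m^3$ waypoints suffice, and the cumulative forbidden set has size $O(\ell)\le O(\eta^2 m^{12})$, still well below the deletion budget available when both endpoint sets have size $\ge D$. Since $G$ is bipartite, path-length parity is handled automatically, and the window of width $10m^4$ is wide enough to absorb both the per-detour length granularity and the parity shift.

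The main obstacle, and the technical heart of the argument, is exact length control: each invocation of Lemma \ref{diamter} supplies only an \emph{upper} bound $\frac{2}{\varepsilon_1}m^3$ on the segment length, so naive waypoint stringing only guarantees that the total lies below some quantity. I would address this by a final ``tuning'' step in which Proposition \ref{3.4} is used to carve a sub-expansion of $F_3$ (or $F_4$) of precisely the size needed so that the corresponding extension to $v_3$ (or $v_4$), combined with the already-fixed detour segments, lands in the target window $[\ell,\ell+10m^4]$ with the correct parity. Maintaining the invariant that the growing forbidden set stays below the deletion budget of the next invocation of Lemma \ref{diamter} is a purely quantitative check that follows from the ample gap between $D$ and the sum $|W|+|V(F_3)|+|V(F_4)|+|\text{(detours so far)}|$; the delicate point throughout is the precise length tuning rather than the expansion bookkeeping.
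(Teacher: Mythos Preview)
Your proposal has a genuine gap in the length-control mechanism, and the suggested fix via Proposition~\ref{3.4} does not do what you claim.

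First, the waypoint scheme is underspecified in a way that matters. After a segment produced by Lemma~\ref{diamter} lands at some vertex $w$ in a waypoint set, the next invocation needs a large set on the ``current end'' side; a single vertex $w$ is far too small. So you must maintain, at every step, a large set (in fact a $(D,m^4)$-expansion) anchored at the current endpoint of the growing path. You never say how to produce this fresh expansion disjoint from everything built so far, and that is exactly the nontrivial ingredient. In the paper's argument for Lemma~\ref{path1} this is done via Lemma~\ref{average-degree} and Corollary~\ref{coro2.6} (the bipartite graph $G[A,B]$), but here $D=\eta^2 m^{20}/(10^{10}t)$ is much larger than what that construction yields; the paper therefore replaces it by a \emph{unit} from Lemma~\ref{unit}, whose exterior has size $\Omega(\eta^2 m^{20})\ge D$ and whose core is within distance $2m^4+2$ of the exterior. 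This is the analogue of Lemma~\ref{path11} the paper alludes to, and it is the missing idea in your plan.

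Second, and more directly, your ``final tuning step'' cannot work. Proposition~\ref{3.4} lets you pass from a $(D,m^4)$-expansion to a $(D_0,m^4)$-expansion for any $D_0\le D$; the \emph{radius} $m^4$ is unchanged. The internal path from an entry point to the centre $v_j$ therefore still has length only ``at most $m^4$'', regardless of $D_0$. Shrinking the expansion gives you no control whatsoever over the extension length, so it cannot compensate for the unknown shortfall accumulated in the detour segments. The paper's extremal six-tuple argument avoids this issue entirely: one keeps extending while $\ell(P_1)+\ell(P_2)<\ell$, each extension increases the sum by at least $1$ (because the fresh expansion is disjoint) and by at most $4m^4$, so the first time the sum reaches $\ell$ it lies in $[\ell,\ell+4m^4]$; a final short link through the two end-expansions then lands in the target window.
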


\begin{figure}[h]
\begin{center}
\scalebox {1.00}[1.00]{\includegraphics {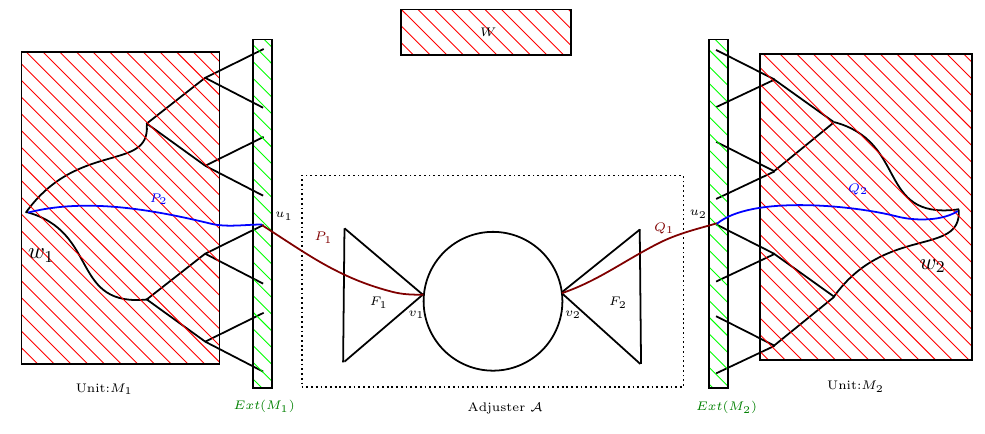}}
\end{center}
\caption{An illustration of the proof of Lemma \ref{yang}.}
\end{figure}
\begin{proof}[{Proof of Lemma \ref{yang}.}]
Let $G$ be an $n$-vertex  bipartite  $K_{s,t}$-free $(\varepsilon_1, \varepsilon_2\eta^2)$-expander with $\delta(G) \ge \frac{d}{16}$. Recall that $\ell$ is the smallest even integer larger than $m^{10}$ and let $c_1$ be such that the condition in Lemma \ref{unit} applies. Using Lemma \ref{unit}, we can find a collection $\{M_1, \ldots, M_{2c_1'\eta}\}$ of pairwise interior-disjoint $(c_1'\eta, m^{20}, c_1'\eta, 2m^4)$-units in $G$ such that $c_1'\ll c_1$, $M_i\cap M_j=\emptyset$ for $i\neq j$ and suppose that each $M_i$ has the core vertex $w_i$. Indeed, suppose that we have found $M_1,\ldots, M_i$ one by one  for $i<2c_1'\eta$.  Let $W'$ be the union of the interiors of those units that we have found. Then $|W'|< 2c_1' \eta(2m^4 + 1 + m^{20}) \cdot c_1' \eta \le 4c_1'^2\eta^2m^{20}$. Applying Lemma \ref{unit} on $G$ (with $W:=W'$) yields another $(c_1'\eta, m^{20}, c_1' \eta, 2m^4)$-unit $M_{i+1}$ disjoint from $W'$. Note that $G$ is bipartite. By the pigeonhole principle, we can find $c_1' \eta$ such units among them such that their core vertices are in the same part of the bipartition for $G$. Without loss of generality, these units are $M_1, \ldots, M_{c_1'\eta }$.  We denote by $u_{i, j}$ the center of the $j$-th hub in $M_i$ for  $1 \le i, j \le c_1' \eta$. Let $W$ be the union of the vertices on the $(w_i, u_{i, j})$-paths in all units. Then $|W| \le c_1' \eta \cdot (2m^4 + 1) \cdot c_1' \eta \le 4c_1'^2\eta^2m^4$.

Using the similar method as in  the proof of Lemma \ref{liu} with a careful analysis, we construct a $\mathrm{T}K^{(\ell)}_{c_1'\eta/2}$  using vertices in $\{w_1, \ldots, w_{c_1'\eta }\}$ as core vertices. Let   $\mathcal{Q}$ be a maximal collection of pairwise vertex-disjoint paths in $G-W$ satisfying the following.
\stepcounter{propcounter}
\begin{enumerate}[label = ({\bfseries \Alph{propcounter}\arabic{enumi}})]
\rm\item\label{rulepathQ1} For $i,j\in[c_1'\eta]$, the path $Q_{i,j}\in\mathcal{Q}$ is  a  $(w_i,w_j)$-path of length $\ell$, which is an extension of some $(\mathsf{Ext}(M_i)$, $\mathsf{Ext}(M_j))$-path $P_{i,j}$. 

\rm\item\label{rulepathQ2} For each pair of units, there is at most one path in $\mathcal{Q}$.
\end{enumerate}
Let us call a unit \emph{bad} if there are more than $\eta m^{16}$ vertices from its interior used in paths in $\mathcal{Q}$, and \emph{good} otherwise. Let $W_1$ be the set of vertices of $\mathcal{Q}$. Then by \ref{rulepathQ1} and \ref{rulepathQ2}, 
\[
|W_1| \le  \tbinom{c_1'\eta}{2}(m^{10}+1)\le  40c_1'^2\eta^2m^{10}.
\] 
This means that there are most  $\frac{40c_1'^2\eta^2m^{10}}{\eta m^{10}}\le 40c_1'^2\eta \le c_1'\eta /2$ bad units. Thus, the number of good units is at least  $c_1'\eta /2$. 

To complete the proof, it suffices to show that for every pair of good units, there is a path in  $\mathcal{Q}$ connecting their core  vertices. Indeed, suppose for a contradiction that there is no desired path in $\mathcal{Q}$ for good units $M_1$ and $M_2$. For $i\in [2]$, let $I_i$ denote the index set such that for each $k\in I_i$ the center  of the hub $H(u_{i, k})$ is not used in $\mathcal{Q}$. By \ref{rulepathQ2}, we have  $|I_i| \ge c_1'\eta/2$.  Let $A_i$ be the set of vertices in $\bigcup _{k\in I_i}S_i(u_{i, k})$ not used in $\mathcal{Q}$. It follows from  $M_i$ is a good unit that
\[
|A_i| \ge \bigcup _{k\in I_i}S_i(u_{i,k}) - \eta m^{16} \ge c_1'\eta m^{20}/2-\eta m^{16} \ge c_1'\eta m^{20}/4.
 \]

Recall that  hubs in $M_i$ are vertex disjoint, which implies
\[
|N_{M_i}(A_i)\backslash W| \ge c_1'^2\eta^2 m^{20}/4 - 4c_1'^2\eta^2m^4 \ge c_1'^2\eta^2m^{20}/8.
 \]
Let $B=\mathsf{Int}(M_1)\cup \mathsf{Int}(M_2)$ and $W_0=W\cup W_1\cup B$. Then 
\[
|B| \le 2 \cdot 2c_1 \eta m^{20}
\]
and 
\[
|W_0|\le 4c_1'^2\eta^2m^4+40c_1'^2\eta^2m^{10}+4c_1' \eta m^{20}\le 2500c_1'^2\eta^2m^{10} \le \frac{D}{2 \log^3n/\eta^2}
\]
as $d \ge \log^{200} n \ge m^{200}$.
Using Lemma \ref{adjuster2} (with $W: = W_0$) yields that $G-W_0$ has a $(D, m^4, 16m^4)$-adjuster $\mathcal{A}= (v_1, F_1, v_2, F_2, A)$  with  $|A| \le 160m^8$, $\ell(\mathcal{A}) \le |A| + 1 \le 170m^8$ and $|V (F_1)| = |V (F_2)| = D$. Let $\ell' = \ell - 16m^4 - \ell(\mathcal{A})$ for $i\in [2]$. Then $0 \le \ell' \le\eta^2m^{12}$. For $i\in [2]$, let $U_i =N_{M_i}(A_i)\backslash(W_0  \cup V (F_1) \cup V (F_2))$. Then $|U_i|\ge c_1'^2\eta^2m^{20}/8-\frac{D}{2\log^3n/\eta^2} - 2D \ge 2D$. So, there exist disjoint vertex sets $U_i'\subseteq U_i$ such that $|U_i'| \ge D$. As $d \ge \log^{200} n \ge m^{200}$, we have $|A \cup W_0 | \le 160m^8 + \frac{D}{2 \log^3n/\eta^2} \le \frac{D}{\log^3n/\eta^2}$. By Lemma \ref{path2}, there is a $(u_1, v_1)$-path
$P_1$ and a $(u_2, v_2)$-path $Q_1$ for $u_1 \in U_1'$ and $u_2 \in U_2'$ in $G - A - W_0$ such that $V(P_1)\cap V(Q_1)=\emptyset$ and $\ell \le\ell(P_1) + \ell(Q_1) \le \ell + 10m^4$. As $U_1'\subset \mathsf{Ext}(M_1) $, there exists a $(w_1, u_1)$-path $P_2$ of length at most $2m^4 + 2 \le 3m^4$ in $M_1$. Similarly, we can find a $(w_2, u_2)$-path $Q_2$ of length at most $2m^4 + 2 \le 3m^4$ in $M_2$. Let $P = P_1 \cup P_2$ and $Q = Q_1 \cup Q_2$. Then $P$ is a $(w_1, v_1)$-path and $Q$ is a $(w_2, v_2)$-path  with $\ell'\le \ell(P) + \ell(Q) \le \ell' + 16m^4$. Consequently, $\ell(\mathcal{A}) \le \ell' - \ell(P) - \ell(Q) \le \ell(\mathcal{A}) + 16m^4$. As $G$ is a bipartite graph and $w_1$, $w_2$ are in the same part, $\ell(\mathcal{A})$ and $\ell -\ell(P)-\ell(Q)$ are congruential. There is a $(v_1, v_2)$-path $R$ in $G[A\cup\{v_1, v_2\}]$ of length $\ell - \ell(P) -\ell (Q)$. Then $P \cup R \cup Q$ is a $(w_1, w_2)$-path of length $\ell$ which satisfies \ref{rulepathQ1}-\ref{rulepathQ2}, contradicting to the maximality of $\mathcal{Q}$. This completes the proof. 
\end{proof}

\section{Proof of Theorem \ref{main-thm-2}}\label{sec6}
In this section, we  prove  Theorem \ref{main-thm-2}. Now we  introduce the following two lemmas  given by Sudakov and Verstra\"{e}te \cite{sud}.

\begin{lemma}\label{densecase1}
{\rm(Lemma 3.1  in \cite{sud})}  
Let $\mathcal{P}$ be a monotone property (closed under taking subgraphs) of graphs, and suppose that for every graph $G\in \mathcal{P}$ with minimum degree $d$, and every set $X \subset V (G)$ of size at most $f(d)$,
\[
|N(X)| > 2|X|.
\]
Then every $G \in \mathcal{P}$ of average degree at least $16d$ contains cycles of $3f(d)$ consecutive even lengths, the shortest having length at most twice the largest radius of any component of $G$.
\end{lemma}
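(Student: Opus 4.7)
The plan is to reduce to a subgraph with large minimum degree and then exploit BFS to realise the required cycles. Since $\mathcal{P}$ is closed under subgraphs and $d(G)\ge 16d$, iterative deletion of vertices of degree less than $d$ yields a subgraph $H\in\mathcal{P}$ with $\delta(H)\ge d$; let $C$ be a component of $H$ that sits inside a component of $G$ of radius at most $r$, pick $v\in C$ of maximum eccentricity in $C$, and run BFS from $v$, writing $L_0=\{v\}, L_1, L_2,\dots$ for the levels and $B_i=\bigcup_{j\le i}L_j$ for the balls. Since $L_{i+1}=N_H(B_i)$, the expansion hypothesis yields $|L_{i+1}|>2|B_i|$, and hence $|B_{i+1}|>3|B_i|$, whenever $|B_i|\le f(d)$. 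Consequently the smallest index $k$ with $|B_k|>f(d)$ satisfies $k\le \log_3 f(d)+O(1)$, while the tree still reaches depth at most $r$.

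The main construction uses non-tree edges of a fixed BFS tree $T$ from $v$: any edge $xy\in E(H)\setminus E(T)$ with $x\in L_i$ and $y\in L_{i+1}$ combined with the two $T$-paths from $x$ and $y$ up to their lowest common ancestor $z\in L_s$ forms an even cycle of length $2(i-s)+2$. Varying the upper level $i$ up to $r-1$ and the lca depth $s$ between $0$ and $i$ makes every even length in $[2,2r]$ a candidate; the task is to show that some window of $3f(d)$ consecutive even values is entirely covered, with the smallest value at most $2r$. To rule out gaps, I would apply the expansion hypothesis to prefixes $\{u_0,\dots,u_j\}$ of a fixed deep root-to-leaf path $P=u_0u_1\dots u_L$ in $T$ with $L$ close to $r$: for $j\le f(d)$ the set has external neighbourhood of size greater than $2j$, so even after discarding the two path-neighbours $u_{j-1}$ and $u_{j+1}$ many external neighbours remain off $P$, and each such neighbour yields a chord whose lca depth I can control via the BFS structure. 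Combined with the minimum-degree bound $\delta(H)\ge d$, which supplies chords in the deeper layers where the expansion hypothesis is no longer available, this lets me slide the lca depth $s$ over every value needed to realise the full window of $3f(d)$ consecutive even lengths, all of whose cycles lie inside the component $C$ of radius at most $r$.

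The main obstacle is precisely this matching of regimes: the expansion hypothesis is valid only for sets of size at most $f(d)$, yet the target window of $3f(d)$ consecutive even lengths can reach well past that scale, so the argument must splice short-range expansion estimates (providing chords near the root) with the long-range minimum-degree bound (providing chords in deeper layers) in such a way that the realised cycle lengths tile an entire even-length interval with no missing value. Bookkeeping the lca depths of the chosen chords to avoid both collisions and gaps, while simultaneously certifying that the shortest produced cycle does not exceed $2r$, is where I expect the proof to require the most technical care.
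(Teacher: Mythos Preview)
The paper does not prove this lemma; it is quoted verbatim from Sudakov and Verstra\"{e}te \cite{sud}. Comparing your proposal against their actual proof, there is a genuine gap in your plan.

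Your mechanism for producing cycles---take a non-tree BFS edge $xy$ and close it through the lowest common ancestor, getting a cycle of length $2(i-s)+2$---does not give you any control over \emph{which} lengths arise. You would need, for some fixed deep path $P$, chords landing at lca depths $s$ covering an entire interval of $3f(d)$ consecutive values, but neither the expansion hypothesis (which only applies to sets of size $\le f(d)$) nor the minimum-degree bound $\delta(H)\ge d$ tells you anything about lca depths of non-tree edges. Your final paragraph correctly identifies this as the obstacle, but the proposed ``splicing'' of the two regimes is not an argument: the minimum degree gives many edges, but places no constraint on where the lca of their endpoints sits, so there is no way to fill the gap between $f(d)$ and $3f(d)$.

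The Sudakov--Verstra\"{e}te proof avoids this difficulty by a different route. They first pass to a bipartite subgraph (so all BFS edges go between consecutive levels), then find a subgraph of minimum degree $4d$, run BFS from a central vertex, and by averaging locate a single level $i$ for which the bipartite graph between $L_i$ and $L_{i+1}$ is dense enough that a subgraph has minimum degree $d$ on one side. The crucial step is then a separate structural lemma (their Lemma~2.2): in a bipartite graph with one-sided minimum degree $d$ and the expansion hypothesis $|N(X)|>2|X|$ for $|X|\le f(d)$, there is a single vertex from which one can find paths of \emph{every} length $1,2,\dots,3f(d)$. Each such path, closed up through the BFS tree, yields a cycle, and the consecutive path lengths translate directly into consecutive even cycle lengths. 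The factor $3$ comes from this lemma, not from any global BFS bookkeeping; this is the key idea your proposal is missing.
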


\begin{lemma}\label{densecase2}
{\rm(Lemma 3.2  in \cite{sud})} Let $a > 0$, $1/2 <b< 1$ be reals such that for any positive integer $n$, $ex(n, H) \le an^{2b}$. Then, for any $H$-free graph $G$ of minimum degree at least $18ad$, and any subset $X$ of vertices of $G$ of size at most $d^{\frac{1}{2b-1}}$, $|N(X)|>2|X|$.
\end{lemma}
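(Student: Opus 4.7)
The plan is a short proof by contradiction via a double-counting argument on the subgraph induced on $Y := X \cup N(X)$. Suppose for contradiction that $|N(X)| \le 2|X|$, so that $|Y| \le 3|X|$; the crucial structural feature is that by the definition of $N(X)$, every $v \in X$ satisfies $N_G(v) \subseteq Y$. All edges of $G$ incident to $X$ therefore lie inside $G[Y]$, which is the only fact about $G$ we will use beyond the minimum-degree hypothesis and $H$-freeness.

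I would next bound $e(G[Y])$ both from below and from above. The lower bound is immediate from the minimum-degree hypothesis: summing degrees over $X$ and using $N_G(v) \subseteq Y$ gives
\[
2\, e(G[Y]) \;\ge\; \sum_{v \in X} d_G(v) \;\ge\; 18ad \cdot |X|,
\]
so $e(G[Y]) \ge 9ad\, |X|$. The upper bound uses that $G[Y]$ is $H$-free, so the hypothesized Tur\'an-type bound $\mathrm{ex}(n,H) \le a n^{2b}$ yields
\[
e(G[Y]) \;\le\; a\, |Y|^{2b} \;\le\; a \cdot 3^{2b}\, |X|^{2b}.
\]

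Combining these two estimates and cancelling $a$, I obtain $9 d \le 3^{2b} |X|^{2b-1}$, and hence
\[
|X|^{2b-1} \;\ge\; \frac{9}{3^{2b}}\, d \;>\; d,
\]
where the strict inequality uses $b < 1$, so that $3^{2b} < 3^2 = 9$. Since $2b - 1 > 0$ by the hypothesis $b > 1/2$, raising to the power $1/(2b-1)$ preserves the inequality and yields $|X| > d^{1/(2b-1)}$, contradicting the hypothesis $|X| \le d^{1/(2b-1)}$.

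The proof has essentially no hard step — it is a two-sided estimate of $e(G[Y])$ — and the only delicate point is the numerical constant $18$ in the minimum-degree hypothesis. After absorbing the factor $\tfrac12$ from the handshake identity, one needs the constant to exceed $3^{2b}$, and any value of $b$ in the allowed range $(1/2,1)$ gives $3^{2b}<9<18$, so the chosen $18$ comfortably suffices (and matches the constant feeding into Lemma~\ref{densecase1}).
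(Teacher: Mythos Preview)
The paper does not give its own proof of this lemma; it is quoted verbatim from Sudakov and Verstra\"{e}te \cite{sud} without argument. Your proof is correct and is exactly the standard double-counting argument that appears in the original source: bound $e(G[Y])$ below by $\tfrac{1}{2}\sum_{v\in X}d_G(v)\ge 9ad|X|$ using $N_G(v)\subseteq Y$ for $v\in X$, bound it above by $a|Y|^{2b}\le a\cdot 3^{2b}|X|^{2b}$ via $H$-freeness, and use $3^{2b}<9$ to force $|X|>d^{1/(2b-1)}$.
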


\begin{definition}
\rm
(\cite{2023liu})
For any connected bipartite graph $H$ and $u,v\in V(H)$, let
$$
\pi(u, v, H)=
    \begin{cases}
        0, &{\rm if}\ u=v.\\
        1, &{\rm if}\ u\ {\rm and}\ v\ {\rm are\ in\ the\ different \ vertex\ classes\ in\ the\ (unique)\ bipartition\ of}\ H.\\
        2, &{\rm if}\ u\ {\rm and}\ v\ {\rm are\ in\ the\ same\ vertex\ class\ and}\ u\neq v. 
    \end{cases}
    $$
\end{definition}

To prove Theorem \ref{main-thm-2}, we divide our proof into three cases   according to the average degree $d$ of an $n$-vertex $K_{s, t}$-free bipartite expander. For the sparse case $d\le\log^{200}n$, we can do better through the following theorem. 

\begin{theorem}\label{liusparse}
There exists $\varepsilon_1 > 0$, such that, for each $0 < \varepsilon_2 < 1/5$ and integers $t\ge s\ge2$, there exists $d_0 = d_0(\varepsilon_1, \varepsilon_2)$ such that the following holds for each $n \ge d \ge d_0$. Suppose that $H$ is an $n$-vertex $K_{s, t}$-free bipartite  $(\varepsilon_1, \varepsilon_2\eta^2)$-expander with $\delta(H) \ge d$. Let $x$, $y \in V (H)$ be distinct, and let $\ell\in[\log^7 n, n/ \log^{12} n]$ satisfy $\pi(x, y, H) \equiv \ell (\mod\: 2)$.
Then $H$ contains an $(x,y)$-path with length $\ell$.
\end{theorem}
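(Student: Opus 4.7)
The plan is to adapt the sparse-expander path-finding framework of Liu--Montgomery to the $K_{s,t}$-free setting, using the adjuster machinery developed in Section~\ref{pf-main-lemmas}. The overall strategy is to grow small expansions around $x$ and $y$, build a large adjuster whose adjustable range contains $\ell$, connect the expansions to the adjuster via short paths from Lemma~\ref{diamter}, and finally use the parity-step structure of the adjuster to realize the exact length $\ell$.

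First I would grow two vertex-disjoint $(D, m^4)$-expansions $F_x$ of $x$ and $F_y$ of $y$, where $D$ is chosen to be a small polylogarithmic function of $n$ (for instance $D=\log^5 n$). The minimum-degree assumption $\delta(H)\ge d\ge d_0$, the $K_{s,t}$-free property (via Corollary~\ref{coro2.6}), and the sublinear expansion property combine to furnish such expansions inside $H$. Next I would build a single $(D, m^4, r)$-adjuster $\mathcal{A}=(v_1,F_1,v_2,F_2,A)$ disjoint from $F_x\cup F_y$, with $r$ chosen so that the adjustable window $[\ell(\mathcal{A}), \ell(\mathcal{A})+2r]$ contains $\ell$ (with matching parity). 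Since $\ell$ can be as large as $n/\log^{12} n$ while a single application of Lemma~\ref{adjuster1} only yields $r\le \eta^2 m^{12}/20$, which can be much smaller than $\ell$ in the sparse regime, I would iterate: repeatedly merge two vertex-disjoint $(D,m^4,r)$-adjusters into a $(D,m^4,2r+O(m^4))$-adjuster by joining them via a short path produced by Lemma~\ref{diamter}. After $O(\log n)$ such merges, $r$ reaches the needed size $\sim n/\log^{14} n$, and the vertices used at every stage remain a $o(n)$-fraction of $V(H)$ so that the expansion property continues to supply short connecting paths.

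Then I would use Lemma~\ref{diamter} once more to find short vertex-disjoint paths $P_x$ from $F_x$ to $F_1$ and $P_y$ from $F_y$ to $F_2$, both avoiding $A$, and extend these through the interiors of the corresponding expansions to obtain an $(x,v_1)$-path and a $(y,v_2)$-path whose combined length $L$ is within $O(m^4)$ of $\ell-\ell(\mathcal{A})$. Finally I would pick the realizing $(v_1,v_2)$-path of $\mathcal{A}$ through $A$ of length $\ell-L$; the hypothesis $\pi(x,y,H)\equiv \ell\pmod 2$ together with bipartiteness of $H$ and the step-two structure of the adjuster's realizable lengths guarantees that this integer lies in $\{\ell(\mathcal{A})+2i:0\le i\le r\}$.

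The main obstacle is building an adjuster with a range that is exponentially larger (in $\log n$) than what Lemma~\ref{adjuster1} provides in a single shot. The iterative merge must be carried out while keeping a careful vertex budget, since each merge consumes a polylogarithmic number of new vertices and we must still have the expansion property available (Lemma~\ref{diamter}) after all previous merges and after the two expansions $F_x,F_y$ have been carved out. In the $K_{s,t}$-free setting, additional care is needed to ensure that the neighborhood-growth steps underlying the expansions and the internal structure of each adjuster remain valid under only $\delta(H)\ge d_0$, which is precisely where the $K_{s,t}$-freeness via Corollary~\ref{coro2.6} becomes essential.
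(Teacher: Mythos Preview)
Your proposal has a genuine gap in how the length $\ell$ is realised. You intend to absorb essentially all of $\ell$ into the adjuster's window $[\ell(\mathcal{A}),\ell(\mathcal{A})+2r]$, which forces $r$ to be of order $n/\mathrm{polylog}(n)$, and then to connect $F_x,F_y$ to the adjuster ends by \emph{short} paths from Lemma~\ref{diamter}. Two things break. First, Lemma~\ref{adjuster1} assumes $n<3K\eta^2$ and so does not apply in the sparse regime of Theorem~\ref{liusparse}; the pertinent statement here is Lemma~\ref{r-adjuster}, and it only gives $r\le 30m=O(\log^3 n)$. Second, and more seriously, even if you iterate merges up to $r\sim n/\log^{c}n$, the centre set satisfies $|A|\le 10mr$, which is of order $n/\mathrm{polylog}(n)$, while your expansions have size $D=\log^5 n$. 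Lemma~\ref{diamter} only lets you avoid about $D\cdot\varepsilon(D)\sim D/\log^2 n=\log^3 n$ vertices, so the final two short paths cannot be routed around $A$. Your claim that the short connecting paths have combined length $L$ within $O(m^4)$ of $\ell-\ell(\mathcal{A})$ also tacitly assumes you can land $\ell(\mathcal{A})$ within $O(m^4)$ of $\ell$; but $\ell(\mathcal{A})$ is the sum of all constituent initial lengths and linking-path lengths and is not a parameter you can aim.

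The paper does the opposite: it builds a \emph{small} adjuster with $r=22m=O(\log^3 n)$ via Lemma~\ref{r-adjuster}, so that $|A|=O(m^2)$ stays negligible, and then uses the path-extension lemma of Liu--Montgomery (recorded here as Corollary~\ref{5.3-path2}) to obtain two vertex-disjoint paths from $\{x,y\}$ to the adjuster cores whose combined length lies in $[\bar\ell,\bar\ell+22m]$ with $\bar\ell=\ell-22m-\ell(\mathcal{A})$; the adjuster absorbs only the residual $O(m)$ slack. The missing ingredient in your plan is precisely this path-extension step (greedily lengthening a path while maintaining a $(D,m)$-expansion at its tip, as in Lemma~\ref{path11}); without it you cannot supply the bulk of the length $\ell$ while keeping the avoided set small enough for Lemma~\ref{diamter} to handle.
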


We remark that Theorem \ref{liusparse} is a slight variation of Theorem 2.7 due to Liu and Montgomery \cite{2023liu}. We omit its proof and interested readers can find it in Appendix \ref{4.3-le}.

\begin{proof}
[Proof of Theorem \ref{main-thm-2}.] Let $\varepsilon_1$ be such that the condition in Theorem \ref{liusparse} applies, and we choose $1/n, 1/d\ll c_1\ll1/K\ll \varepsilon_1, \varepsilon_2, 1/s, 1/t$,  $t, s\in \mathbb{N}$ satisfying $t\ge s\ge2$. Let $G$ be an $n$-vertex  $K_{s, t}$-free  bipartite $(\varepsilon_1, \varepsilon_2\eta^2)$-expander with average degree $d$ and $\varepsilon=K^{-\frac{s-1}{s}}$. The case that $d\le\log^{200}n$ is obvious by Theorem \ref{liusparse}. For the case  $d\ge \varepsilon n^{(s-1)/s}$, let $d_1=d/(288t^{1/s})$. Then by Theorem \ref{turan-kst} and Lemma \ref{densecase2} (with $(H, a, b)=(K_{s,t}, t^{1/s}, 1-\frac{1}{2s})$), for each $X\subseteq V(G)$ of size  at most $d_1^{s/(s-1)}$, we have $|N(X)|>2|X|$. Let $f(d)=d_1^{s/(s-1)}$. Using Lemma \ref{densecase1} with $\mathcal{P}$ being the family of all $K_{s,t}$-free graphs, we obtain   that $G$ has $3f(d_1)$ cycles of consecutive even lengths, and the length of shortest cycle is at most twice than that of radius of $G$.  On the other hand, by  Lemma  \ref{diamter},  the radius of $G$ is at most 
$\frac{2}{\varepsilon_1}\log^3\left(\frac{15n}{\varepsilon_2\eta^2}\right)$. Thus, 
 $\mathcal{C}(G)$ contains every even integer in $\left[\frac{4}{\varepsilon_1}\log^3\left(\frac{15n}{\varepsilon_2\eta^2}\right),  \frac{\eta^2}{(288t^{1/s})^{s/(s-1)}}\right]$. 

Suppose that $\log^{200}n\le d\le \varepsilon n^{\frac{s-1}{s}}$. By Lemma \ref{unit}, we can  find two $(c_1\eta, m^{20}, c_1\eta,$ $2m^4)$-units $M_1$ and $M_2$  with core vertices $v_1, v_2$, whose interiors are pairwise disjoint. Note that $|\mathsf{Ext}(M_i)|\ge c_1^2\eta^2m^{20}$ for each $i\in[2]$ and $|\mathsf{Int}(M_1)\cup \mathsf{Int}(M_2)|\le 2 \cdot 2c_1 \eta m^{20}$. By Lemma \ref{diamter}, there is a path $P_0$  of length at most $m^4$ from some $v_1'\in \mathsf{Ext}(M_1)$ to some $v_2'\in \mathsf{Ext}(M_2)$ while avoiding the vertices in $\mathsf{Int}(M_1)\cup \mathsf{Int}(M_2)$.  Then we can extend $P_0$ to a $(v_1, v_2)$-path $P_1$ of length at most $m^4+ 2m^4+ 2+2m^4+2 \le6m^4$ using two subpaths in the units $M_1$ and $M_2$. 

Let $\ell$ be an even integer in $\left[200m^8, \frac{\eta^2m^{12}}{20}\right]$. In the following, we will find a $(v_1, v_2)$-path $P$ with length  $\ell$   while avoiding the interiors of $P_1$. This implies that $P\cup P_1$ is a cycle of length at most $\ell+6m^4$. Consequently, $\mathcal{C}(G)$ contains every even integer in $\left[300m^8, \frac{\eta^2m^{12}}{100}\right]$, which completes the proof.  Let $D=\max\left\{\frac{c^2m^{19}\eta^2}{10^{20}}, \frac{\varepsilon_2\eta^2}{10^{20}}\right\}$, and  $\ell'=\ell-16m^4 -\ell(\mathcal{A})$. Then $m^8\le \ell'\le \frac{\eta^2m^{12}}{20}$. Using Lemma \ref{adjuster2}  (with $W=V(P_1)\cup(\mathsf{Int}(M_1)\cup \mathsf{Int}(M_2)) $), we can find a $(D, m^4, 16m^4)$-adjuster in $G-(V(P_1)\cup\mathsf{Int}(M_1)\cup \mathsf{Int}(M_2))$, say $\mathcal{A}=(v_3, v_4, F_3, F_4, A)$. Note that  $|V(F_3)|=|V(F_4)|=D$, $|A|\le160m^8$ and $\ell(\mathcal{A})\le|A|+1\le170m^8$. For $i\in [2]$, let $U_i=\mathsf{Ext}(M_i)\backslash (V(P_1)\cup V(F_3)\cup V(F_4))$. Then $|U_i|\ge c_1^2\eta^2m^{20}-6m^4-2D\ge  c_1^2\eta^2m^{20}/2\ge2D$.  So there exists $U_{i}'\subseteq U_i$ with $|U_{i}'|=D$  and   $U_{1}'\cap U_{2}'=\emptyset$. Note that  $|A \cup \mathsf{Int}(M_1)\cup \mathsf{Int}(M_2)| \le D/\log^3\frac{n}{\eta^2}$. Then,  by Lemma \ref{path2}, we can find a $(u_1,v_3)$-path $P_2$ and a $(u_2,v_4)$-path $Q_2$ for some $u_i' \in U_{i}$ such that $V(P_2)\cap V(Q_2)=\emptyset$,  and $\ell' \le \ell(P_2) + \ell(Q_2) \le \ell' + 10m^4$. We can extend $P_2$ to a $(v_1, v_3)$-path $P_3$ through the vertices in $M_1$ and $Q_2$  to a $(v_2, v_4)$-path $Q_3$ through the vertices in $M_2$ with  $\ell' \le \ell(P_3) + \ell(Q_3) \le \ell' + 16m^4$. Now, $\ell(\mathcal{A}) \le \ell- (\ell(P_3) + \ell(Q_3)) \le \ell(\mathcal{A}) + 16m^4$. As $G$ is a bipartite graph, $\ell(\mathcal{A})$ and $\ell-\ell(P)-\ell(Q)$ are congruential. Thus we can find a  $(v_3, v_4)$-path $R$ in $G[A\cup\{v_3, v_4\}]$ of length $\ell - \ell(P_3)- \ell(Q_3)$, which implies that $P_3 \cup R \cup Q_3$ is a $(v_1, v_2)$-path with length  $\ell$.
\end{proof}

\section{Concluding remarks}
In this paper, our main result strengthens previous results by demonstrating that the  condition excluding $K_{s,t}$ not only forces large clique subdivisions, but also guarantees such subdivisions in a balanced form. 
More generally, the $K_{s,t}$-free condition appears to be a natural condition forcing significantly larger balanced clique subdivisions than those obtained in general  graphs under the same average degree condition. It would be interesting to investigate whether Theorem~\ref{main-thm} can be extended beyond forbidding complete bipartite subgraphs, for instance, to graphs excluding more general bipartite configurations. 
In a recent breakthrough result, Liu and Montgomery~\cite{2023liu} gave the first construction of even cycles with precisely controlled lengths under the average degree condition. Along the direction, our second result improves their work in the setting of $K_{s,t}$-free graphs, showing that forbidding complete bipartite graphs leads to substantially stronger conclusions. 
A natural remaining challenge is to find many odd cycles of prescribed lengths in non-bipartite $K_{s,t}$-free graphs with high average degree (or chromatic number) restriction.

\section*{Acknowledgement}
We thank the anonymous referees for their careful reading and valuable comments which greatly improve the presentation of our manuscript. The second author would like to thank Xizhi Liu for fruitful discussions.

\small {

}

\appendix

\section{Proofs of Lemmas \ref{path1} and \ref{path2}}\label{3.7-le}
In this section, we mainly complete the proof of Lemma \ref{path1}. The proof of Lemma \ref{path2} is similar to the proof Lemma \ref{path1}, the sight difference is when prove Lemma \ref{path2}, we use a unit (whose existence  can be guaranteed by Lemma \ref{unit}) to substituted the bipartite graph $G[A,B]$ in the proof of Lemma \ref{path11}. 

\begin{lemma}\label{path11}
Suppose $t\ge s\ge2$ are integers,  $ 1/d\ll c,~\varepsilon_2\ll1/\log K,~\varepsilon_1, 1/s,  1/t$.  Let $D=\max\left\{\frac{c^2m^{19}\eta^2}{10^{20}}, \frac{\varepsilon_2\eta^2}{10^{20}}\right\}$, and $G$ be an $n$-vertex $K_{s,t}$-free  $(\varepsilon_1, \varepsilon_2\eta^2)$-expander with $\delta(G) \ge d$ and $n<3K\eta^2$. Suppose $F_i$ is a $(D, m^4)$-expansion of $v_i$ in $G$ for each $i\in [2]$, and $W \subseteq V (G)\backslash(V (F_1) \cup V (F_2))$ satisfies $|W| \le 2D/m^3 $. Then for any $\ell\le \frac{\eta^2m^{12}}{20}$, there is a $(v_1, v_2)$-path in $G - W$ with length between $\ell$ and $\ell+9m^4$.
\end{lemma}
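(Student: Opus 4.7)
The strategy is to assemble a $(v_1,v_2)$-path of length approximately $\ell$ in three pieces: a short segment from $v_1$ into the boundary of $F_1$, a long ``growth'' segment in $G - W$ traversing a large portion of $G$, and a short segment from the boundary of $F_2$ back to $v_2$. The $9m^4$ slack absorbs imprecisions from each piece and from the final length matching.

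First, I would iteratively grow a path from $v_1$, initially using the $(D, m^4)$-expansion $F_1$ to move off $v_1$ at a controlled depth, and then continuing into $G - W - V(F_2)$. At each extension step, let $u$ be the current endpoint and $P$ the current path. As long as $|W| + |P| + |V(F_2)| < n/2$, the sublinear expansion property of $G$ applied to the remaining graph $G - W - V(F_2) - P$, together with the $K_{s,t}$-free condition via Corollary \ref{coro2.6}, ensures that BFS from $u$ in the remaining graph quickly reaches a fresh vertex, which we add to $P$. Mirroring this construction around $v_2$ produces a second growing path whose interior is disjoint from the first.

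Once the two growing paths have endpoints whose BFS-neighbourhoods reach size at least $\varepsilon_2\eta^2$, I would apply Lemma \ref{diamter} to find a short bridging path of length at most $\tfrac{2}{\varepsilon_1}\log^3(15n/\varepsilon_2\eta^2) \le m^4$ between them in $G - W$, avoiding previously used vertices. The full $(v_1,v_2)$-path is then the concatenation of the two growing paths and the bridge. Inside each $F_i$, the $(D,m^4)$-expansion provides at least one vertex at every distance $0,1,\ldots,r_i$ from $v_i$ for some $r_i \le m^4$, giving extra freedom to choose the entry/exit depths.

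The main technical obstacle is length control when $\ell$ is close to the upper bound $\eta^2 m^{12}/20$, which is far beyond the $O(m^3)$ diameter of $G$, so pure BFS from a single vertex cannot produce a path of this length in one shot. The iterative growth above produces a path whose length depends on how many extension steps we perform, giving a discrete dial adjustable by $1$ at each step; combined with the $\pm 1$ freedom inside each $F_i$ and the variable bridge length of up to $m^4$, the total sweeps through a range of width at least $9m^4$, so we can land in $[\ell,\ell+9m^4]$. The condition $\ell \le \eta^2 m^{12}/20 \ll n$ ensures the global vertex budget suffices, and the $K_{s,t}$-freeness via Corollary \ref{coro2.6} is what guarantees that even as $|P|$ grows large we can still find fresh expansion at every extension step, since the joint neighbourhood bounds force enough external neighbours outside of $W \cup V(F_{3-i}) \cup P$.
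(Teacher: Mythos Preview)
Your proposal has a genuine gap in the growth step. You write that at each extension you run BFS from the current endpoint $u$ in the remaining graph and attach a fresh vertex; but once the path $P$ has length exceeding $d$ --- which it must, since $\ell$ can be as large as $\eta^2 m^{12}/20$ and $\eta^2 = d^{s/(s-1)} \gg d$ --- there is no reason the single vertex $u$ has \emph{any} neighbour outside $P \cup W \cup V(F_{3-i})$. Corollary~\ref{coro2.6} does not rescue this: it lower-bounds the neighbourhood of a \emph{set} each of whose vertices has many neighbours in the target, and says nothing useful when applied to a singleton. So your BFS from $u$ may not even start, and the ``dial adjustable by $1$'' is unavailable.

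The paper's fix is to carry a $(D,2m^4)$-expansion at the \emph{current} end of each growing path, not just at $v_1,v_2$. It considers six-tuples $(P_1,P_2,v_3,v_4,F_3,F_4)$ where $P_i$ is a $(v_i,v_{i+2})$-path and $F_{i+2}$ is a $(D,2m^4)$-expansion of the new endpoint $v_{i+2}$, and maximises $\ell(P_1)+\ell(P_2)$ subject to $\ell(P_1)+\ell(P_2)\le\ell+4m^4$. If the maximum were below $\ell$, Lemma~\ref{average-degree} shows the leftover graph $G-W-V(P_1\cup P_2)-V(F_3\cup F_4)$ still has average degree $\ge d/4$, hence a subgraph of minimum degree $\ge d/8$; inside it, any vertex $v$ together with Corollary~\ref{coro2.6} applied to a subset of $N(v)$ yields a fresh $(D,m^4)$-expansion of $v$. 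Now Lemma~\ref{diamter} connects the \emph{expansion} $F_3$ (a set of size $D$) to this new expansion by a path of length $\le m^4$, and one extends through both expansions to contradict maximality. The point you are missing is that the connection step must be applied to a size-$D$ set at the moving endpoint, not to the endpoint itself; to repair your argument you would need exactly this mechanism of refreshing the expansion at every extension, after which the growth proceeds in chunks of length $\le 4m^4$ rather than by single vertices.
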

\begin{proof}
Let $(P_1, P_2, v_3, v_4, F_3, F_4)$ be a six-tuple satisfying the following properties.
\stepcounter{propcounter}
\begin{enumerate}[label = ({\bfseries \Alph{propcounter}\arabic{enumi}})]
\rm\item\label{AppenrulepathsP1} For each $i\in [2]$, $P_i$ is a $(v_i, v_{i+2})$-path in $G - W$ and $\ell(P_1)+\ell(P_2)$ is at most $\ell + 4m^4$.

\rm\item\label{AppenrulepathsP2} For each $i\in \{3,4\}$, $F_i$ is a $(D, 2m^4)$-expansion of $v_i$ in $G - W$ with $V (F_i) \cap V (P_{i-2}) = \{v_i\}$, and $V (P_1 \cup F_3) \cap V (P_2 \cup F_4)=\emptyset$.

\rm\item\label{AppenrulepathsP3}   Subject to \ref{AppenrulepathsP1} and \ref{AppenrulepathsP2}, $\ell(P_1)+\ell(P_2)$ is maximum. 
\end{enumerate}

Note that such a sex-tuple exists as $(v_3, v_4, v_1, v_2, F_1, F_2)$ for any $v_3\in V(F_1), v_4\in V(F_2)$ satisfying \ref{AppenrulepathsP1} and \ref{AppenrulepathsP2}.

We claim that $\ell(P_1)+\ell(P_2) \ge \ell$. Suppose to the contrary that $\ell(P_1)+\ell(P_2) < \ell$. Let $W'=W \cup V (P_1\cup P_2) \cup V (F_3\cup F_4)$. Then $|W'| \le 2D/m^3 +2(\ell+4m^4) +2D\le 4D$. It follows from  Lemma \ref{average-degree} that $d(G-W') \ge d/4$, which implies that there exists a subgraph $H'\subseteq G-W'$ such that  $\delta(H')\ge d/8$. Choose a vertex  $v\in V(H')$ and a set $A\subseteq N_{H'}(v)$ with $|A|=\frac{d}{600}$ and let $B=N_{H'}(A)\backslash\{v\}$. Note that $H'[A, B]$ does not contain a copy of $K_{s-1, t}$ with $t$ vertices in $A$ and $s - 1$ vertices in $B$. Using Corollary \ref{coro2.6} yields that
\[
|B|\ge\frac{1}{et}\left(d/8-1\right)\left(\frac{d}{600}\right)^{\frac{1}{s-1}}\ge D.
\]
Since $|W \cup V (P_1) \cup V (P_2)| \le 3D/m^3 $, we have $ \varepsilon(D)\cdot D/4 \ge   \varepsilon(n)\cdot D/4 \ge 3D/m^3 \ge |W \cup V (P_1\cup P_2)|$. Then by Lemma \ref{diamter}, there is a path $Q'$ from $V (B )$ to $V (F_3)\cup V(F_4)$ of length at most $m^4$, avoiding $(W \cup V (P_1)\cup V (P_2))\backslash\{v_3, v_4\}$. Suppose that $Q'$ has endvertices $v_3''  \in F_3$ and $v_3' \in B$. By \ref{AppenrulepathsP2}, we can extend $Q'$ to a $(v_3, v)$-path $Q$ of length at most $m^4+2+2m^4\le4m^4$ avoiding $V(P_2)\cup (V(P_1)\backslash\{v_3\})$. Since $|B|\ge D$, we can find a $(D, m^4)$-expansion $F_3'$ of $v$ in $H'[A, B]$. Let $P_1 ' = P_1 \cup Q$. As $\ell(Q) \le 4m^4$, $P_1$ is a $(v, v_3)$-path of length at least $\ell(P_1) + 1$ and at most $\ell(P_1) + 4m^4$. Then, $(P_1',P_2, v, v_4, F_3',F_4)$ satisfies \ref{AppenrulepathsP1}-\ref{AppenrulepathsP2} with $\ell(P_1) > \ell(P)$, a contradiction. 

Recall that $|W \cup V (P_1\cup P_2)| \le 3D/m^3$. We have $\varepsilon(D)\cdot D/4 \ge|W \cup V (P_1\cup P_2)|$. Then by Lemma \ref{diamter}, there is a path $R$ of length at most $m^4$, from some $r_1 \in V (F_3)$ to some $r_2 \in V (F_4)$ avoiding $W \cup V (P_1)\cup V (P_2)\backslash\{v_3, v_4\}$. Let $Q_i$ be a path from $v_{i+2}$ to $r_i$ in $F_{i+2}$ of length at most $2m^4$ for $i\in [2]$. Then, $P_1\cup P_2 \cup Q_1\cup Q_2 \cup R$ is a $(v_1, v_2)$-path in $G - W$ of length at least $\ell(P_1)+\ell(P_2) \ge\ell$ and at most $\ell+4m^4 + 2m^4 + 2m^4 + m^4 = \ell + 9m^4$ by \ref{AppenrulepathsP2}.
\end{proof}
Next, we prove Lemma \ref{path1} by Lemmas \ref{path11} and \ref{diamter}.
\begin{proof}[{Proof of  Lemma \ref{path1}}]   Recall that $|U_1 \cup U_2| \ge 2D$ and $|V (F_3) \cup V (F_4)| = 2D$. We have $\varepsilon(2D) \cdot 2D/4 \ge  \varepsilon(n) \cdot2D/4=\varepsilon_1\cdot2D/(4m^2) \ge 2D/m^3 \ge |W|$. Then by Lemma \ref{diamter}, there is a shortest path $P_0 \subseteq G-W$ from $U_1\cup U_2$ to $V (F_3)\cup V (F_4)$ of length at most $m^4$. Without loss of generality, we can assume that $P_0$ goes from some $v_1 \in U_1$ to $\overline{v}_3\in V (F_3)$. As $F_3$ is a $(D, 2m^4)$-expansion of $v_3$, we can extend $P_0$ to a $(v_1, v_3)$-path $P$ of length at most $3m^4$.

Let $W_0 = W \cup V (P)$. Then $|W_0 | \le D/m^3 + 3m^4 + 1 \le 2D/m^3$. By Lemma \ref{path11} with $(F_1, F_2, D, m, W, \ell) = (U_2, F_4, D, m, W_0, \ell)$, there is a path $Q$ in $G-W_0$ from some $v_2 \in U_2$ to $v_4$ of length between $\ell$ and $\ell + 5m^4$. As $\ell\le \ell(P) +\ell(Q) \le \ell + 8m^4$, the paths $P$ and $Q$ are desired.
\end{proof}

\section{Proof of Lemma \ref{unit}}\label{unit-le}
In this section, we greedily find a collection of units whose interiors are pairwise disjoint. To achieve this, we first prove that every $K_{s, t}$-free graph is also dense even removing a “not big" vertex set, see Lemma \ref{average-degree2}.  Note that $d \ge \log^{200} n \ge m^{200}$. Since $n/\eta^2 \ge K$, we have $m^4 \ge  \log^{4} K$. Let $c_1 =1/(1000t)$. For sufficiently large $K$, we have
\begin{equation}\label{8m5}
c_1\eta> c_1d^{1/2} \ge 10m^{40}.
\end{equation}
and
\begin{equation}\label{8m6}
n/\eta^2\ge m^a.
\end{equation}
for any given constant $a$. 

\begin{lemma}\label{average-degree2}
Let $t\ge s\ge2$ be integers, and $x > 0$. There exists $K$ such that the following holds for each $n$ and $d$ satisfying $n \ge K \eta^2$ and $d \ge \log^{200} n$. If $G$ is an $n$-vertex $K_{s, t}$-free graph with $\delta(G)\ge d $, then for any vertex set $W \subseteq V(G)$ of size at most $\eta^2m^x$, we have $d(G - W) \ge d/2$.
\end{lemma}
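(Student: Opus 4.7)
My plan is to mirror the proof of Lemma~\ref{average-degree}. I would begin with the double-counting identity $\sum_{v\in V\setminus W}d_G(v)=2e(G-W)+e(W,V\setminus W)\ge d(n-|W|)$ coming from $\delta(G)\ge d$. Since $G$ is $K_{s,t}$-free, so is the bipartite subgraph between $W$ and $V\setminus W$, and Theorem~\ref{1996furedi} (applied with $|W|\ge s$; the case $|W|<s$ is handled by a trivial bound) yields
\[
e(W,V\setminus W)\le (t-s+1)^{1/s}|W|(n-|W|)^{1-1/s}+(s-1)(n-|W|)^{2-2/s}+(s-2)|W|.
\]
Note that $|W|\le n/2$ follows from $|W|\le \eta^2m^x$ together with \eqref{8m6} applied with $a$ chosen much larger than $x$. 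Dividing the resulting lower bound on $2e(G-W)$ by $n-|W|\ge n/2$, I obtain
\[
d(G-W)\ge d-(t-s+1)^{1/s}\frac{|W|}{(n-|W|)^{1/s}}-(s-1)(n-|W|)^{1-2/s}-\frac{(s-2)|W|}{n-|W|}.
\]

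The goal is to show each of the three error terms above is at most $d/6$, so that $d(G-W)\ge d/2$. For the first term I plug in $|W|\le\eta^2m^x$ and $n-|W|\ge K\eta^2/2$ and use the identity $\eta^{2(s-1)/s}=d$ to bound it by $2^{1/s}(t-s+1)^{1/s}\cdot dm^x/K^{1/s}$. The third term, after the same substitutions, becomes $O((s-2)m^x/K)\cdot d$. Both are at most $d/6$ provided $m^x/K^{1/s}$ is small in terms of $s,t$; since the hypothesis $d\ge\log^{s'}n$ gives $m\le 2d^{1/s'}$, and $s'\ge 200$ is very large compared to $x$, choosing $K$ sufficiently large (depending on $s,t,s',x$) does the job. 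The middle term $(s-1)(n-|W|)^{1-2/s}$ is a harmless constant for $s=2$; for $s\ge 3$ one uses the upper bound $n\le e^{d^{1/s'}}$ coming from the same logarithmic hypothesis to show $(n-|W|)^{(s-2)/s}\le d/(6(s-1))$ in the relevant range of $(n,d)$.

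The main obstacle lies in the middle term when $s\ge 3$: it depends only on $n$ and can grow as $n$ does, so only the very strong logarithmic hypothesis $d\ge \log^{s'}n$ with $s'\ge 200$ keeps it under control (the admissible window for $n$, pinched between $n\ge K\eta^2=Kd^{s/(s-1)}$ below and $n\le e^{d^{1/s'}}$ above, is exactly where the estimate goes through). Carefully tracking how $s'$, $x$ and $K$ interact to dominate each of the three error terms is the delicate part of the argument; summing them up yields $d(G-W)\ge d-3\cdot d/6=d/2$, as required.
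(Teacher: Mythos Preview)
Your approach of mirroring Lemma~\ref{average-degree} via Theorem~\ref{1996furedi} breaks down at the middle term when $s\ge 3$. After dividing by $n-|W|$, that term is $(s-1)(n-|W|)^{(s-2)/s}$, and you claim it is at most $d/6$ using the hypothesis $n\le e^{d^{1/s'}}$. But this upper bound on $n$ is far too weak: at the top of the admissible window one gets $n^{(s-2)/s}=\exp\bigl(\tfrac{s-2}{s}\,d^{1/s'}\bigr)$, and since $d^{1/s'}$ eventually dominates $\log d$, this is much larger than $d$. Concretely, for $s=3$, $s'=200$ and $d=10^{800}$ we have $d^{1/200}=10^{4}$, so $n$ may be as large as $e^{10^{4}}$; then $n^{1/3}\approx e^{3333}$, while $d\approx e^{1842}$. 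The crucial difference from Lemma~\ref{average-degree} is that there one has $n<3K\eta^2$, which forces $n^{(s-2)/s}=O(d^{(s-2)/(s-1)})=o(d)$; here no comparable upper bound on $n$ is available, and the F\"uredi route cannot close for $s\ge 3$.

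There is also a smaller flaw in your first term: substituting $n-|W|\ge K\eta^2/2$ in the denominator yields $dm^x/K^{1/s}$, and since $m$ grows with $n$ this is not $\le d/6$ for any fixed $K$. This one is repairable --- keep $n$ in the denominator, write $\eta^2m^x/n^{1/s}=d\cdot m^x(\eta^2/n)^{1/s}$, and invoke~\eqref{8m6} with $a>sx$ --- but the repair does nothing for the middle term.

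The paper bypasses the obstruction by applying Lemma~\ref{1954kova} directly to the bipartite pair $(V(G)\setminus W,\,W)$: this bounds the \emph{average} number of neighbours in $W$ of a vertex of $V(G)\setminus W$ by essentially $(t/(n-|W|))^{1/s}|W|+s-1$, i.e.\ only the analogue of your first term appears, with no $(n-|W|)^{1-2/s}$ contribution. Combined with $\delta(G)\ge d$ this gives $d(G-W)\ge d-\bar d(V(G)\setminus W)$, and~\eqref{8m6} shows the subtracted quantity is at most $d/2$.
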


\begin{proof}
Let $H = (V(G)\backslash W, W, E)$ be a bipartite subgraph of $G$, and $E$ be the set of all edges between $V(G)\backslash W$ and $W$ in $G$. Applying Lemma \ref{1954kova} with $(G, A, B, s, t) =(H, V(G)\backslash W, W, s, t)$, we have
\[
|V(G)\backslash W|\tbinom{\overline{d}(V(G)\backslash W)}{s}\le t\tbinom{|W|}{s}.
\]
That is,
\[
\frac{\overline{d}(V(G)\backslash W)\cdot(\overline{d}(V(G)\backslash W)-1) \cdots (\overline{d}(V(G)\backslash W)-s+1)}{s!}\le\frac{t}{|V(G)\backslash W|} \cdot\frac{|W| \cdot(|W|-1) \cdots (|W|-s+1)}{s!}.
\]
Note that
\[
(\overline{d}(V(G)\backslash W)-s+1)^s\le\overline{d}(V(G)\backslash W)\cdot(\overline{d}(V(G)\backslash W)-1) \cdots (\overline{d}(V(G)\backslash W)-s+1),
\]
and
\[
|W| (|W|-1) \cdots (|W|-s+1)\le |W|^s.
\]
Thus, we have
\[
(\overline{d}(V(G)\backslash W)-s+1)^s\le\frac{t}{|V(G)\backslash W|} |W|^s,
\]
and
\[
\overline{d}(V(G)\backslash W)\le\left(\frac{t}{|V(G)\backslash W|}\right)^{1/s}|W|+s-1\le t \eta^2m^x/n^{1/s}\le d/2,
\]
where the third inequality follows from (\ref{8m6}) and the choice of $K$. Thus we have $d(G-W) \ge d/2$.

\end{proof}

We give the following claim, which helps us find many vertex-disjoint hubs.
\begin{claim}\label{claim-hub}
Let $t\ge s\ge2$  be integers. There exists $K$ such that the following holds for each $n$ and $d$ satisfying $n \ge K \eta^2$, $d \ge \log^{200} n$ and any $h_1, h_2\le\eta/(100t)$. If $G$ is an $n$-vertex $K_{s, t}$-free graph with $\delta(G)\ge d $, then for any vertex set $W' \subseteq V(G)$ of size at most $\eta^2m^{52}$, we have $ G - W' $ contains an $(h_1,h_2)$-hub.
\end{claim}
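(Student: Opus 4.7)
The plan is to find a subgraph of $G - W'$ with high minimum degree, and then build the hub greedily (or equivalently, by invoking Hall's theorem) using the Kővári--Sós--Turán bound around the center.

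First I would apply Lemma \ref{average-degree2} with $x = 52$ to the removed set $W'$; since $|W'| \le \eta^2 m^{52}$, this gives $d(G - W') \ge d/2$. By iteratively deleting vertices of degree less than $d/4$, extract a subgraph $H \subseteq G - W'$ with $\delta(H) \ge d/4$. Pick any $v_1 \in V(H)$. Since $h_1 \le \eta/(100t) \le d/4$ (using that $\eta = d^{s/(2(s-1))} \le d$ for $s \ge 2$ and $d$ large), fix an arbitrary $S_1(v_1) \subseteq N_H(v_1)$ of size exactly $h_1$.

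Next, I would reduce the problem of finding the pairwise disjoint sets $S_1(z) \subseteq N_H(z) \setminus \{v_1\}$ of size $h_2$, one for each $z \in S_1(v_1)$, to a Hall-type condition. By the multi-matching version of Hall's theorem, such disjoint sets exist provided that
\[
|N_H(A) \setminus \{v_1\}| \ge h_2 \cdot |A| \quad \text{for every nonempty } A \subseteq S_1(v_1).
\]
For $|A| \le t$, one uses the bare minimum-degree bound: $|N_H(A) \setminus \{v_1\}| \ge d/4 - 1$, which exceeds $t \cdot h_2 \le \eta/100$ since $d \ge \log^{s'} n$ with $s' \ge 200$ dominates $\eta$ in the relevant regime. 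For $|A| \ge t$, the key observation is that the bipartite graph $H[A, V(H) \setminus (A \cup \{v_1\})]$ contains no $K_{s-1,t}$ with $t$ vertices on the $A$-side and $s-1$ on the other side --- otherwise adjoining $v_1$ (which is adjacent to every vertex of $A$) produces a $K_{s,t}$ in $G$. Each $z \in A$ has at least $d/4 - |A| - 1 \ge d/8$ neighbours on the opposite side (using $|A| \le h_1 \le d/8$), so Corollary~\ref{coro2.6} applied with $(s, \delta) \leftarrow (s-1, d/8)$ delivers
\[
|N_H(A) \setminus (A \cup \{v_1\})| \ge \frac{d}{8et} |A|^{1/(s-1)},
\]
and this exceeds $h_2 \cdot |A|$ throughout the range $t \le |A| \le h_1$, using $h_2 \le \eta/(100t)$ together with $\eta^2 = d^{s/(s-1)}$.

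The main obstacle is carrying out the verification of Hall's condition cleanly in the ``large $|A|$'' regime: the Kővári--Sós--Turán bound is essentially tight there, and the inequality $(d/(8et))|A|^{1/(s-1)} \ge h_2|A|$ needs the constraint $h_1 \le \eta/(100t)$ to interact precisely with the ambient hypothesis $d \ge \log^{s'} n$. A secondary, more routine check is ensuring the iterative deletion producing $H$ does not destroy too much of the host graph, which follows since $d(G-W') \ge d/2$ is already guaranteed by Lemma~\ref{average-degree2} independent of the deleted vertices.
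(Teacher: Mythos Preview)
Your setup via Lemma~\ref{average-degree2} and passing to $H$ with $\delta(H)\ge d/4$ matches the paper, but the Hall verification in the large-$|A|$ regime is genuinely wrong for $s\ge 3$.  The inequality you need is
\[
\frac{d}{8et}\,|A|^{1/(s-1)}\ \ge\ h_2\,|A|,\qquad\text{i.e.}\qquad |A|^{(s-2)/(s-1)}\le \frac{d}{8et\,h_2}.
\]
At the extreme $|A|=h_1=\eta/(100t)$ and $h_2=\eta/(100t)$ this becomes $\eta^{(2s-3)/(s-1)}\le C_{s,t}\,d$; since $\eta=d^{\,s/(2(s-1))}$, the left side equals $d^{\,s(2s-3)/(2(s-1)^2)}$, and the exponent exceeds $1$ for every $s\ge 3$.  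So the inequality fails for large $d$ (concretely, for $s=3$ you get $d^{9/8}$ versus $d$).  The hypothesis $d\ge\log^{s'}n$ is irrelevant here---this is a pure power-of-$d$ mismatch.

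The underlying reason is that by fixing $S_1(v_1)$ of size exactly $h_1$ up front, you throw away the freedom to choose \emph{which} $h_1$ neighbours serve as star centres, and Corollary~\ref{coro2.6} only guarantees $|N_H(A)|\gtrsim d|A|^{1/(s-1)}$, which is sublinear in $|A|$.  The paper avoids this by working with the full $A=N_H(v)$ of size $\ge d/4$ and taking a maximal family $A'\subseteq A$ of disjoint $h_2$-stars.  If $|A'|<\eta/(100t)$, then $|A\setminus A'|\ge d/8$ and every vertex of $A\setminus A'$ has $\ge d/8$ neighbours in the leaf set $B$; now Corollary~\ref{coro2.6} gives $|B|\ge \frac{d}{8et}(d/8)^{1/(s-1)}\ge \eta^2/(10^4t^2)$, which combined with $|B|=|A'|\cdot\eta/(100t)$ forces $|A'|\ge\eta/(100t)$.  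Your Hall approach can be repaired by running the defect form of Hall over all of $N_H(v_1)$ rather than a preselected $h_1$-subset, since then any violating $A$ automatically has $|A|\ge d/8$ and the same computation goes through.
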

\begin{proof}
By Lemma \ref{average-degree2}, $d(G-W')\ge d/2$, and then we can find a subgraph $H$ in $G-W'$ with $\delta(H)\ge d/4$. Arbitrarily choose a vertex $v$ in $H$ and let $A = N_H(v)$, so that $|A| \ge d/4$. We simply say a star is nice if it has $\eta/(100t)$ leaves and the center of this star lies in $A$.  It suffices to find $\eta/(100t)$ vertex-disjoint nice stars in $H-\{v\}$, which together with $v$ forming a $(\eta/(100t), \eta/(100t))$-hub. Let $A'\subset A $ be a maximal subset such that we can find $|A'|$ vertex-disjoint nice stars in $H - \{v\}$. Let $B$ be the union of the leaves of
all these nice stars. If $|A'|\ge \eta/(100t)$, then we are done. Otherwise, $|A \backslash A'| \ge d/4 - \eta/(100t)\ge d/8$. Each vertex in $A \backslash A'$ has fewer than $\eta/(100t)$ neighbors in $V (H) \backslash (B \cup \{v\})$, otherwise that vertex could be added to $A'$, a contradiction to the maximality of $A'$. Therefore, as $\delta(H) \ge  d/4$, each vertex in $A\backslash A'$ has at least $d/4 - \eta/(100t) -1 \ge d/8$ neighbors in $B$. Note that $H -\{v\}$ does not contain a copy of $K_{s-1, t}$ with $t$ vertices in $A \backslash A'$ and $s - 1$ vertices in $B$, since otherwise such a copy together with $v$ forms a copy of $K_{s, t}$ in $H$. Therefore, by Corollary \ref{coro2.6}, we have \[
|B| \ge \frac{d|A\backslash A'|^{1/(s-1)}}{8et} \ge \frac{\eta^2}{8^{s/(s-1)}et} \ge \frac{\eta^2}{10^4t^2}.
\] As $|B| = |A'|\eta/(100t)$, we thus have $|A'| \ge \eta/(100t)$, a contradiction.
\end{proof}

Now, we prove Lemma \ref{unit}.
\begin{proof}[{Proof of Lemma \ref{unit}}]
We choose $K$ to be sufficiently large.
By  Claim \ref{claim-hub}, we can greedily find vertex-disjoint hubs $H(w_1), \ldots, H(w_{m^{32}})$ and $H(u_1), \ldots, H(u_{\eta m^{32}})$ in $G-W$ such that each $H(w_i)$, $1 \le i \le m^{32}$, is a $(2c_1\eta, 2c_1\eta)$-hub and each $H(u_j)$, $1\le j \le \eta m^{32}$, is a $(2m^{20}, 2c_1\eta)$-hub. Indeed, let 
\[
W'=\bigcup_{i=1}^{m^{32}}V(H(u_i))\cup \left(\bigcup_{j=1}^{\eta m^{32}}V(H(u_j))\right)\cup W
\]
Then  $|W'|\le4c_1\eta^2m^{20} + 2\cdot4c_1^2\eta^2m^{32} + 2\cdot2c_1\eta\cdot2m^{20}\cdot\eta   m^{32} \le 10c_1\eta^2m^{52}$.

Recall that for a hub with a center $v$, $S_1(v)$ is the vertex set of the neighbors of $v$ in the hub, and $B_1(v) = \{v\} \cup S_1(v)$. For simplicity, let $Z=\bigcup_{i=1}^{m^{32}}B_1(w_i)\cup \left(\bigcup_{j=1}^{\eta m^{32}}B_1(u_j)\right)$. Based on the hubs we found as above. We shall  construct a unit using some vertex $w_i$ as the core vertex of the unit. Let $\mathcal{P}$ be a maximum collection of internally vertex-disjoint paths in $G-W$ under the following rules.
\stepcounter{propcounter}
\begin{enumerate}[label = ({\bfseries \Alph{propcounter}\arabic{enumi}})]
\rm\item\label{App-paths1} Each path $P_{ij}$ is a unique $w_i,u_j$-path of length at most $2m^4$.
\rm\item\label{App-paths2} Each path $P_{ij}$ does not contain any vertex in $Z\backslash (B_1(w_i)\cup B_1(u_j))$. 
\end{enumerate}
\begin{claim}\label{unit2}
There exists a vertex $w_i$ connected to at least $c_1\eta$ vertices $u_j$ via the paths in $\mathcal{P}$.
\end{claim}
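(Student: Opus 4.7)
The plan is to argue by contradiction. Suppose that every $w_i$ is connected to fewer than $c_1\eta$ distinct vertices $u_j$ via paths in $\mathcal{P}$. Then $|\mathcal{P}|<c_1\eta m^{32}$, and since each path in $\mathcal{P}$ has length at most $2m^4$, the total number of internal vertices on paths in $\mathcal{P}$ is at most $2c_1\eta m^{36}$. Fix an arbitrary $w_i$, and let $U_i$ denote the set of $u_j$'s not connected to $w_i$; by assumption $|U_i|\ge \eta m^{32}-c_1\eta\ge \eta m^{32}/2$. Let $W_{\mathrm{forb}}$ be the set of internal vertices of $\mathcal{P}$ together with $Z\setminus B_1(w_i)$, which by the earlier bound $|Z|\le 5\eta m^{52}$ satisfies $|W_{\mathrm{forb}}|\le 6\eta m^{52}$.

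On the $u_j$-side, set $Y:=\bigcup_{j\in U_i}(S_2(u_j)\setminus W_{\mathrm{forb}})$. Since the hubs $H(u_j)$ are vertex-disjoint and $|S_2(u_j)|\ge 4c_1\eta m^{20}$, we have $|Y|\ge |U_i|\cdot 4c_1\eta m^{20}-|W_{\mathrm{forb}}|\ge c_1\eta^2 m^{52}$, which by \eqref{8m5} and \eqref{8m6} dwarfs $\varepsilon_2\eta^2$. On the $w_i$-side, start from $X_0:=S_2(w_i)\setminus W_{\mathrm{forb}}$, of size $\ge 3c_1^2\eta^2$, and iteratively enlarge it by BFS in $G-W_{\mathrm{forb}}$. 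Applying Corollary \ref{coro2.6} to the $K_{s,t}$-free bipartite pair $(X_r,N_{G-W_{\mathrm{forb}}}(X_r))$ with $\delta(G)\ge d$ gives $|X_{r+1}|\ge (d/(et))\cdot|X_r|^{1/s}$; since $d\cdot|X_r|^{1/s}$ scales like $\eta^2\cdot|X_r|^{1/s}/\eta^{2/s}$, each step brings $|X_r|$ closer to $\eta^2$ by an $s$-th root factor, and after a constant number $r=O(\log(1/c_1)/\log s)$ of steps we reach $|X_r|\ge \varepsilon_2\eta^2$.

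Now apply Lemma \ref{diamter} to the sets $X_r$ and $Y$ in $G-W_{\mathrm{forb}}$. Since $|X_r|\varepsilon(|X_r|)/4\ge \varepsilon_1\varepsilon_2\eta^2/(4m^2)$ dominates $|W_{\mathrm{forb}}|\le 6\eta m^{52}$ (using $\eta\gg m^{54}$, which follows from $d\ge\log^{s'}n$ with $s'\ge 200$), the lemma yields a path $Q$ in $G-W_{\mathrm{forb}}$ of length at most $\tfrac{2}{\varepsilon_1}\log^3(15n/(\varepsilon_2\eta^2))=\tfrac{2m^3}{\varepsilon_1}$ from $X_r$ to $Y$. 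Tracing $Q$ back through the $r$ BFS layers produces a path from some $s\in S_2(w_i)$ to some $s'\in S_2(u_j)$ with $j\in U_i$ of length at most $\tfrac{2m^3}{\varepsilon_1}+r\le m^4$. Prepending two edges through $H(w_i)$ (to reach $w_i$) and appending two edges through $H(u_j)$ (to reach $u_j$) gives a $(w_i,u_j)$-path $P_{ij}$ of length at most $m^4+4\le 2m^4$. By construction, $P_{ij}$ avoids $W_{\mathrm{forb}}$ and hence is internally vertex-disjoint from all paths in $\mathcal{P}$ and meets $Z$ only inside $B_1(w_i)\cup B_1(u_j)$, so $\mathcal{P}\cup\{P_{ij}\}$ still satisfies \textbf{E1} and \textbf{E2}, contradicting the maximality of $\mathcal{P}$.

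The main obstacle is the BFS enlargement step for $X_0$: because $c_1\ll\varepsilon_2$, the initial set $S_2(w_i)$ has size well below the expander threshold $\varepsilon_2\eta^2/5$, so one cannot directly invoke the sublinear expansion. The resolution is to leverage $K_{s,t}$-freeness via Corollary \ref{coro2.6} to bootstrap $|X_r|$ from $c_1^2\eta^2$ up to $\varepsilon_2\eta^2$ in a bounded number of neighborhood steps, and one must verify that the chosen parameter ordering $c_1\ll 1/K\ll \varepsilon_1,\varepsilon_2,1/s,1/t$ is tight enough that such a constant number of steps suffices while the forbidden-vertex budget in Lemma \ref{diamter} remains comfortably above $|W_{\mathrm{forb}}|$.
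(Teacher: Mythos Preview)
Your approach differs fundamentally from the paper's, and as written it has a genuine gap.

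\textbf{The paper's approach.} The paper does \emph{not} fix a single $w_i$. Instead it takes the union $\bigcup_{i=1}^{m^{32}} N_{H(w_i)}(T_i)$ over \emph{all} $m^{32}$ hubs, obtaining a source set of size at least $c_1^2\eta^2 m^{32}$. Because of the factor $m^{32}$, this set is comfortably above both the expander threshold $\varepsilon_2\eta^2$ and the avoided set $\bigcup_i T_i\cup W\cup W_1$ (of size $\le 4c_1\eta^2 m^{20}$), so Lemma~\ref{diamter} applies directly with no enlargement step needed. The resulting path lands in some $T_{k_1}$ and some $H(u_{k_2})$ and is then extended through those two hubs.

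\textbf{Where your argument breaks.} Two related problems arise from working with a single $w_i$.
First, you omit $W$ from $W_{\mathrm{forb}}$. The paths in $\mathcal{P}$ live in $G-W$ and any new path must as well; the hypothesis allows $|W|\le 4c_1\eta^2 m^{20}$. Once $W$ is included, the avoided set has size of order $c_1\eta^2 m^{20}$, and Lemma~\ref{diamter} then requires the source set $X_r$ to satisfy $|X_r|\,\varepsilon(|X_r|)/4\ge |W_{\mathrm{forb}}|$, i.e.\ roughly $|X_r|\gtrsim c_1\eta^2 m^{22}$. No amount of bootstrapping via Corollary~\ref{coro2.6} can reach this: setting $a_r=|X_r|/\eta^2$, your recursion becomes $a_{r+1}\ge (et)^{-1}a_r^{1/s}$, which has fixed point $(et)^{-s/(s-1)}<1$. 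Thus the iteration stalls near $|X_r|\approx \eta^2(et)^{-s/(s-1)}$, far short of $c_1\eta^2 m^{22}$. (This also shows your claimed target $|X_r|\ge\varepsilon_2\eta^2$ need not be reached when $\varepsilon_2>(et)^{-s/(s-1)}$, which the lemma's hierarchy does not rule out.)
Second, even the individual step ``apply Corollary~\ref{coro2.6} with $\delta(G)\ge d$'' is not justified: a vertex of $X_r$ may have many neighbours inside $X_r$ or inside $W_{\mathrm{forb}}$, so the hypothesis ``every vertex in $A$ has at least $\delta$ neighbours in $B$'' fails with $\delta=d$.

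The fix is precisely what the paper does: aggregate over all $m^{32}$ hubs so that the $m^{32}$ factor in the source set dominates the $m^{20}$ factor in $|W|$, eliminating the need for any bootstrap.
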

\begin{proof}
Suppose to the contrary that each vertex $w_i$ is connected to fewer than $c_1\eta$ vertices $u_j$ by
the paths in $\mathcal{P}$. Let $\mathsf{Int}(\mathcal{P})$ be the set of interior vertices in all the paths in
$\mathcal{P}$. Then by \ref{App-paths1},
\[
|\mathsf{Int}(\mathcal{P})| \le 2m^4 \cdot m^{32} \cdot c_1\eta = 2c_1\eta m^{36}.
\]
Let $W_1$ be the vertex set consisting of $\mathsf{Int}(\mathcal{P})$ and $\cup_jB_1(u_j)$ if $u_j$ has been connected to at least one of the vertices $w_i$. As there are at most $m^{32}\cdot c\eta$ such vertices $u_j$, we have
\[
|W_1| \le |\mathsf{Int}(\mathcal{P})| + m^{32} c_1\eta \cdot (2m^{20} + 1)\le 2c_1\eta m^{36} + 4c_1\eta m^{52}\le c_1^2\eta^2m^{12},
\]
where the last inequality by (\ref{8m5}). For each $1 \le i \le m^{32}$, let $T_i = B_1(w_i)\backslash W_1$. Then by \ref{App-paths2} and the assumption, we have $|T_i| \ge c_1\eta$.

As the graphs $H(w_i)$ are vertex disjoint $(2c_1\eta, 2c_1\eta)$-hubs, we have $|\bigcup_{i=1}^{m^{32}}N_{H(w_i)}(T_i)| \ge2c_1\eta \cdot c_1\eta \cdot m^{32}$, and hence we have
\[
|\cup_{i=1}^{m^{32}}N_{H(w_i)}(T_i)\backslash W_1| \ge2c_1^2\eta^2m^{32}-c_1^2\eta^2m^{12}\ge c_1^2\eta^2m^{32}.
\]
Notice that there are at least $\eta m^{32}-\eta\cdot c_1m^{32}  \ge\eta m^{32}/2$ vertices $u_j$ which is not connected by a path in $\mathcal{P}$. Without loss of generality, we write these vertices $u_1$, \ldots, $u_p$, where $p \ge \eta m^{32}/2$. By \ref{App-paths2}, $\cup_{j=1}^{p}B_1(u_j)\cap W_1=\emptyset$, and we have 
\[
|\cup_{j=1}^{p}H(u_j) - W_1| \ge 2m^{20}\cdot2c_1\eta \cdot \eta m^{32}/2- c_1^2\eta^2m^{12} \ge c_1^2\eta^2m^{52}.
\]
We will apply Lemma \ref{diamter} to connect $\cup_{i=1}^{m^{32}}N_{H(w_i)}(T_i)\backslash W_1$ and $\cup_{j=1}^{p}V (H(u_j ))\backslash W_1$, while
avoiding the vertices in $\bigcup_{i=1}^{m^{32}} T_i\cup W\cup W_1$. Since $d \ge \log^{200}n\ge m^{200}$ and $n/\eta^2\ge K$ is sufficiently large, we have
\begin{equation}\label{m4}
|\cup_{i=1}^{m^{32}}T_i|+ |W| + |W_1| \le (2c_1\eta + 1)m^{32} + 2c_1\eta^2m^{20} + c_1^2\eta^2m^{12} \le 4c_1\eta^2m^{20}.
\end{equation}
Recall that $\varepsilon(x)$ is decreasing and $n/\eta^2 \ge K$ is sufficiently large, we have
\begin{equation}\label{mm2}
\varepsilon(n)=\frac{\varepsilon_1}{\log^2 (15n/\varepsilon^2\eta_2)}\ge\frac{4}{\log^3(15n/\varepsilon_2\eta^2)}\ge \frac{100}{m^{4}},
\end{equation} where $\varepsilon$ is defined in Definition \ref{subk}.
Hence, 
\[
\varepsilon(c_1^2\eta^2m^{32}) \cdot c_1^2\eta^2m^{32}/4 \ge\varepsilon(n) \cdot c_1^2\eta^2m^{32}/4\ge100\cdot c_1^2\eta^2m^{32}/(4m^{4})\ge 4c_1^2\eta^2m^{28}\geq 4c_1\eta^2m^{20},
\]
where the second inequality by (\ref{mm2}).

Thus, by (\ref{m4}) and Lemma \ref{diamter}, there is a shortest path $\overline{P}_{k_1k_2}$ of length at most $2\log^3(15n/\varepsilon_2\eta^2)/\varepsilon_1+1\le  m^4$
from $T_{k_1}$ to $V (H(u_{k_2}))$ avoiding $W$ and $W_1$ for some $k_1\in [m^{32}],k_2\in [p]$, and $\overline{P}_{k_1k_2}$ can be extended to a $(w_{k_1}, u_{k_2})$-path $P_{k_1k_2}$ of length at most $1 + m^4 + 2 \le 2m^4$ in $G - (W \cup W_1)$, a contradiction to the maximality of $\mathcal{P}$, as claimed.
\end{proof}

Hence, we may assume that $w_i$ connects to $c_1\eta$ vertices $u_1,u_2,\ldots, u_{c_1\eta}$ by Claim \ref{unit2}. For all $(2m^{20}, 2c_1\eta)$-hubs corresponding to each $u_j$, $j\in [c_1\eta]$, if we can find an $(m^{20}, c_1\eta)$-hub which is disjoint from $\cup_{j=1}^{c_1\eta}\mathsf{Int}(P_{ij})$, then all such $(m^{20}, c_1\eta)$-hubs together with $\cup_{j=1}^{c_1\eta}P_{ij}$ form a $(c_1\eta, m^{20}, c_1\eta, 2m^4)$-unit, as desired. So it remains to find a $(m^{20}, c_1\eta)$-hub centered at $u_j$ as required above for each $j \in [c_1\eta]$. By \ref{App-paths1}, we have that each path $P_{ij}$ has length at most $2m^4$ and then $|\cup_{j=1}^{c_1\eta}V(P_{ij})| \le 2c_1\eta m^4$. It is possible to  take a set of $m^{20}$ vertices $v\in S_1(u_j)$ for each $j\in [c_1\eta]$ along with $c_1\eta$
vertices in $S_1(v)$ avoiding $\cup_{j=1}^{c_1\eta}V(P_{ij})$, forming the desired $(m^{20}, c_1\eta)$-hub.
\end{proof}

\section{Proof of Lemma \ref{adjuster2}}\label{3.12-le}
In this section, we shall prove Lemma \ref{adjuster2}. We first claim that there are varieties of simple adjusters in $G$ (see Lemma \ref{largee}), which can be linked to many \emph{octopuses} (see Definition \ref{oc}). Now,  we give the following lemma, which helps us find many $(D, m^4/2, 1)$-adjusters, and we postpone the proof of this lemma later.

\begin{lemma}\label{largee}
Suppose $1/n, 1/d\ll 1/K\ll \varepsilon_1, \varepsilon_2, 1/s, 1/t$, and $t, s\in \mathbb{N}$ satisfy $t\ge s\ge2$,  and $d\ge \log^{200}n$. If $G$ is an $n$-vertex $K_{s,t}$-free $(\varepsilon_1, \varepsilon_2\eta^2)$-expander with $\delta(G) \ge d$ and $n\ge K\eta^2$, then for any vertex set $W\subseteq V(G)$ satisfying $|W| \le 5D$ with $D=\frac{\eta^2m^{20}}{10^{10}t}$, $G-W$ contains a $(D, m^4/2, 1)$-adjuster.
\end{lemma}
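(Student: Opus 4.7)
The goal is to build a $(D, m^4/2, 1)$-adjuster $(v_1, F_1, v_2, F_2, A)$ inside $G-W$. My plan mirrors the base case ($r=1$) of the proof of Lemma~\ref{adjuster1}: locate a short even cycle, pick $v_1, v_2$ on it so that its two arcs have lengths differing by $2$, and build large vertex-disjoint balls around $v_1, v_2$ as the two expansions. The main new point compared with Lemma~\ref{adjuster1} is that the target size $D = \Theta(\eta^2 m^{20})$ is much larger than the $\Theta(\eta^2)$-sized expansions there, so a single round of Kővári–Sós–Turán growth is insufficient and must be followed by several rounds of expander growth.

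\textbf{Cycle and center set.} Since $|W|\le 5D\le \eta^2 m^{20}$ and $d\ge \log^{s'}n$ with $s'\ge 200$, Lemma~\ref{average-degree2} (with $x=20$) yields $d(G-W)\ge d/2$, and Corollary~\ref{coro} produces a bipartite $(\varepsilon_1,\varepsilon_2\eta^2)$-expander $H\subseteq G-W$ with $\delta(H)\ge d/16$. The Moore bound applied to $H$ (valid because $d/16 \ge \log^{200}n/16$) yields $\operatorname{girth}(H)\le O(\log |H|/\log\log |H|)$; choosing $K$ large enough ensures this is at most $m^4/2$. Extract an even cycle $C\subseteq H$ of length $2r\le m^4/2$, and pick $v_1, v_2\in V(C)$ at cyclic distance $r-1$. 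The two arcs of $C$ supply $(v_1,v_2)$-paths of lengths $\ell=r-1$ and $\ell+2=r+1$ inside $A:=V(C)\setminus\{v_1,v_2\}$ with $|A|=2r-2\le 5m^4$, fulfilling axioms A3 and A4.

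\textbf{Two disjoint large expansions.} Build $F_1, F_2$ by a synchronized layer-by-layer BFS from $v_1, v_2$ in $G-W-A$, treating each other's vertices as forbidden; let $W_i:=W\cup A\cup V(F_{3-i})$ with $|W_i|\le 7D$. Each extension proceeds in two phases. In \emph{Phase I} (Kővári–Sós–Turán growth), $v_i\in V(H)$ retains $\ge d/32$ neighbors in $G-W-A$, and iterating Corollary~\ref{coro2.6} on the $K_{s,t}$-free bipartite slice between consecutive BFS layers gives $|N_{k+1}|\ge \frac{d}{O(t)}|N_k|^{1/s}$, so the ball exceeds size $\varepsilon_2\eta^2$ after $O(s)$ steps. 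In \emph{Phase II} (expander growth), once $|X|\ge 30D\log^2 m/\varepsilon_1$ we have $|N_G(X)|\ge 2|W_i|$ by the expander property of $G$, hence $|N_{G-W_i}(X)|\ge \varepsilon_1|X|/(2\log^2 m)$; so $O(m^2\log m/\varepsilon_1)$ additional steps push the ball past size $2D$. Trim to exactly $D$ vertices via Proposition~\ref{3.4}; the total BFS depth $O(s)+O(m^2\log m/\varepsilon_1)$ stays comfortably below $m^4/2$, so $F_i$ is a $(D, m^4/2)$-expansion of $v_i$, and axiom A1 is enforced by the forbidden-set mechanism.

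\textbf{Main obstacle.} The delicate issue is the start of Phase~I for the second ball $F_2$: once $|V(F_1)|$ has grown past $d$, one worries that $V(F_1)$ could swallow all of $v_2$'s neighborhood. The remedy is to interleave the two constructions so that $|V(F_1)|$ and $|V(F_2)|$ stay comparable throughout. At the critical transition moment a codegree estimate from Lemma~\ref{1954kova} (applied to the bipartite subgraph between the two BFS fronts, which inherits $K_{s,t}$-freeness from $G$) bounds the overlap of each new layer of $F_2$ with $V(F_1)$ by a vanishing fraction of its size; once both balls reach Phase II, disjointness is automatic since each BFS step needs only $O(\varepsilon_1|X|/\log^2 m)$ new vertices, far fewer than the $\Omega(n)$ still-available vertices after trimming $W \cup A \cup V(F_{3-i})$.
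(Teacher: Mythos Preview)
Your approach has a genuine gap in Phase~II. After Phase~I the balls have size $\Theta(\varepsilon_2\eta^2)$, and you want expander growth to carry them to size $D = \eta^2 m^{20}/(10^{10}t)$. But the sublinear expander property of $G$ gives only $|N_G(X)| \ge \varepsilon_1 |X|/m^2$, which is far smaller than the forbidden set $|W_i| \ge |W| = 5D$ whenever $|X| \ll D m^2/\varepsilon_1$. Hence $N_G(X)$ may lie entirely inside $W_i$ and the BFS stalls. Your own clause ``once $|X| \ge 30D\log^2 m/\varepsilon_1$'' implicitly concedes that Phase~II is self-sustaining only past a threshold of order $D m^2/\varepsilon_1 \gg D \gg \varepsilon_2\eta^2$, yet you never explain how to bridge the gap from the end of Phase~I to that threshold. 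Passing to the sub-expander $H \subseteq G-W$ from Corollary~\ref{coro} removes $W$ but does not help: there is no lower bound forcing $|H| \ge 2D$, and even inside $H$ the forbidden set for $F_i$ contains $V(F_{3-i})$, which under interleaving has size $\approx |X| \gg \varepsilon_1|X|/m^2$. The codegree appeal to Lemma~\ref{1954kova} bounds edge counts between the two fronts, not $|N(F_2)\cap V(F_1)|$; every vertex of $F_1$ could have a single neighbour in $F_2$ and still absorb the entire external neighbourhood of $F_2$.

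The paper takes a fundamentally different route. It first finds $m^{240}$ pairwise disjoint \emph{small} $(\eta^2/6000t,\, m^4/600,\, 1)$-adjusters, each obtained by exactly your cycle-plus-two-step-KST idea (which succeeds because $\eta^2$ is the natural K\H{o}v\'ari--S\'os--Tur\'an scale, and one may pass to a fresh sub-expander of $G$ minus everything built so far via Lemma~\ref{average-degree2} and Corollary~\ref{coro}). It then \emph{assembles} these small adjusters into one large adjuster using the \emph{octopus} structure (Definition~\ref{oc}, Claims~\ref{adjuster3}--\ref{adjuster4}): repeatedly invoke Lemma~\ref{diamter} to attach $\Theta(m^{20})$ small adjuster-ends to a single core adjuster by short paths, so that the resulting union around each core vertex is a connected set of size $\ge D$ and radius $\le m^4/2$. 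This sidesteps the Phase-II obstacle entirely, because every invocation of Lemma~\ref{diamter} connects two sets of size $\Theta(\eta^2 m^x)$ while avoiding a set of size $O(\eta^2 m^{x})\cdot m^{-3}$ --- precisely the regime in which sublinear expansion suffices.
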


\begin{proof}[{Proof of Lemma \ref{adjuster2}}]
We prove the lemma by induction on $r$. When $r = 1$, by Lemma \ref{largee}, there is  a $(D, m^4/2, 1)$-adjuster in $G - W$. Next, we assume that there  exists a $(D, m^4, r)$-adjuster in $G-W$ for  some $r$ with $1 < r \le \frac{1}{20}\eta^ 2m^{12}$, denoted by $\mathcal{A}_1 = (v_1, F_1, v_2, F_2, A_1)$. Let $W_1 = W \cup A_1\cup V (F_1)\cup V (F_2)$. Then $|W_1| \le 4D$. By Lemma \ref{largee}, $G - W_1$ contains a $(D, m^4/2, 1)$-adjuster, denoted by  $\mathcal{A}_2= (v_3, F_3, v_4, F_4, A_2)$. Note that $|F_1\cup F_2| = |F_3\cup F_4|= 2D $, and $|W \cup A_1\cup A_2| \le \frac{D}{\log^3n/\eta^2}  + 20rm^4  \le \frac{2D}{\log^3n/\eta^2}$. Since $\varepsilon(2D)$ is decreasing when $2D\ge k$ and  $\varepsilon(n,\varepsilon_1,k)=\frac{\varepsilon_1}{\log^2n/\eta^2}\ge \frac{100}{\log^3n/\eta^2}$ (here $\varepsilon(n,\varepsilon_1,k)$ is defined in Definition \ref{subk}), we have $\varepsilon(2D)\cdot2D/4 \ge \varepsilon(n)\cdot2D/4 \ge \frac{2D}{\log^3n/\eta^2} \ge |W \cup A_1 \cup A_2|$. By Lemma \ref{diamter}, there exists a path $P$ of length at most $m^4$, from $V (F_1)\cup V (F_2)$ to $V (F_3)\cup V (F_4)$ avoiding $W \cup A_1\cup A_2$. We may assume that $P$ is a path from $V (F_1)$ to $V (F_3)$.

Due to the fact that $F_1$ and $F_3$ are both $(D, m^4)$-expansions of $v_1$ and $v_3$, respectively, $P$ can be extended to a $(v_1, v_3)$-path $Q$ of length at most $3m^4$ via these two expansions. We aim to claim that $(v_2, F_2, v_4, F_4, A_1 \cup A_2 \cup V (Q))$ forms a $(D, m^4, r+ 1)$-adjuster $\mathcal{A}$. It is easy to see that $\mathcal{A}$ satisfies \ref{Ad1} and \ref{Ad2} of Definition \ref{defad}. As $|A_1\cup A_2\cup V (Q)| \le 10m^4r+10m^4/2+3m^4 \le 10(r+1)m^4$, \ref{Ad3} holds. For \ref{Ad4}, let $\ell = \ell(A_1)+\ell(A_2)+\ell(Q)$. For every $i \in \{0, 1, \ldots, r + 1\}$, there are some $i_1 \in \{0, 1, \ldots, r\}$ and $i_2 \in \{0, 1\}$ such that $i = i_1 + i_2$. Let $P_1$ be a $(v_1, v_2)$-path in $G[A_1 \cup \{v_1, v_2\}]$ of length $\ell(A_1) + 2i_1$ and $P_2$ be a $(v_3, v_4)$-path in $G[A_2 \cup \{v_3, v_4\}]$ of length $\ell(A_2) + 2i_2$. Then $P_1 \cup Q \cup P_2$ is a $(v_2, v_4)$-path in $G[A_1 \cup A_2 \cup V (Q)]$ of length $\ell + 2i$, as claimed.
\end{proof}

In order to prove Lemma \ref {largee}, we introduce a structure from \cite{2023yang}, called \emph{Octopus}, which is comprised of many vertex-disjoint simple adjusters.

\begin{definition}
\rm
(Octopus).\label{oc} Given integers $r_1, r_2, r_3, r_4 > 0$, an $(r_1, r_2, r_3, r_4)$-octopus $\mathcal{B}= (A, R, \mathcal{D}, \mathcal{P})$ is a graph consisting of a \emph{core} $(r_1, r_2, 1)$-adjuster $A$, one of the ends of $A$, called $R$, and
\stepcounter{propcounter}
\begin{enumerate}[label = ({\bfseries \Alph{propcounter}\arabic{enumi}})]
\rm\item\label{App-oco1} a family $\mathcal{D}$ of $r_3$ vertex-disjoint $(r_1, r_2, 1)$-adjusters, which are disjoint from $A$, and

\rm\item\label{App-oco2} a minimal family $\mathcal{P}$ of internally vertex-disjoint paths of length at most $r_4$, such that each adjuster in $\mathcal{D}$ has at least one end which is connected to $R$ by a subpath from a path in $\mathcal{P}$, and all the paths are disjoint from all center sets of the adjusters in $\mathcal{D} \cup \{A\}$. Obviously,  $|\mathcal{P}| \le |\mathcal{D}|$.
\end{enumerate}
\end{definition}
\begin{proof}[{Proof of Lemma \ref{largee}}]
First, we claim that there are $m^{240}$ pairwise disjoint $(\frac{\eta^2}{6000t}, \frac{m^4}{600}, 1)$-adjusters in $G - W$. Let $W'$ be the vertex set of these adjusters. Then $|W'|\le(2\cdot\frac{\eta^2}{6000t} + 10\cdot \frac{m^4}{600})\cdot m^{240} \le \frac{\eta^2m^{240}}{30t}$. Applying Lemma \ref{average-degree2} with $W'=W$, we have $d(G - W' ) \ge d/2$, and by Corollary \ref{coro} with $G = G-W'$, there exists a bipartite $(\varepsilon_1, \varepsilon_2\eta)$-expander $H \subseteq G-W'$ with $\delta(H) \ge \frac{d}{16}$. Thus, there exists a shortest cycle $C$ in $G'$ of length at most $\frac{m^4}{16}$ and we denoted by $2r$ the length of $C$. Arbitrarily pick two vertices $v_1$, $v_2 \in V (C)$ of distance $r-1$ on $C$. Since $\delta(H)\geq \frac{d}{16}$, we can find a subset $A_i\subset N_{H-C}(v_i)$ of size $\frac{d}{400}$ for each $i\in [2]$ such that $A_1\cap A_2=\emptyset$. Let $B_1=N_{H-C}(A_1)\backslash  A_2$ and $B_2=N_{H-C}(A_2)\backslash  A_1$. Note that $G[A_i, B_i]$ does not contain a copy of $K_{s-1, t}$ with $t$ vertices in $A_i$ and $s - 1$ vertices in $B_i$. For each $v\in A_i$, there are at least $\frac{1}{16}d-|C|-|A_1|-|A_2|\ge \frac{1}{32}d$ neighbors in $B_i$. By Corollary \ref{coro2.6} with $\delta: =\frac{d}{32}$ yields that 
\[
|B_i|\ge\frac{1}{et}\left(\frac{d}{32}\right)\left(\frac{d}{400}\right)^{\frac{1}{s-1}}\ge \frac{\eta^2}{3000t}.
\]

Therefore we can find two vertex-disjoint $(\frac{\eta^2}{6000t}, 2)$-expansion of $v_1$ and $v_2$ respectively, say $F_1$ and $F_2$, such that $|N(v_i')\cap V(F_i)|\le d$ for each $v_i'\in N_{F_i}(v_i)$ for each $i\in[2]$. Hence,  we get a $(\frac{\eta^2}{6000t}, \frac{m^4}{600}, 1)$-adjuster as desired by combining $F_1$, $F_2$  and $C$.  

An adjuster is \emph{touched} by a path if they intersect in at least one vertex, and \emph{untouched} otherwise. The following claim helps us to find  internally vertex-disjoint short paths by connecting a vertex set to many ends of different adjusters.

\begin{claim}\label{adjuster3}Let $x \ge10$ and $X \subseteq V (G)$ be an arbitrary vertex set of size at most $\eta^2m^x/2$. Let $B \subseteq G \backslash X$ be a graph with order at least $\frac{\eta^2m^{x+8}}{3000t}$ and $\mathcal{U}$ be a subfamily of $(\frac{\eta^2}{6000t}, \frac{m^4}{600}, 1)$-adjusters in $G \backslash (X \cup V (B))$ with $|\mathcal{U}| \ge m^{3x}$. Let $\mathcal{P}_B$ be a maximum collection of internally vertex-disjoint paths of length at most $m^4/6$ in $G- X$, each connecting $V (B)$ to one end from different adjusters in $\mathcal{U}$. Then $V (B)$ can be connected to $1200m^{x+20}$ ends from different adjusters in $\mathcal{U}$ via a subpath from a path in $\mathcal{P}_B$.
\end{claim}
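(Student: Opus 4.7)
The plan is to argue by contradiction from the maximality of $\mathcal{P}_B$: if $|\mathcal{P}_B| < 1200 m^{x+20}$, then I will exhibit one further path that can be added to $\mathcal{P}_B$, contradicting its maximality. The tool for producing this extra path will be the diameter lemma (Lemma \ref{diamter}) in the sublinear expander $G$, applied after deleting a small forbidden set.

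First I would bound the forbidden vertex set. Let $Y$ consist of $X$ together with the interiors of all paths already in $\mathcal{P}_B$. Since each path has at most $m^4/6$ internal vertices, and $|\mathcal{P}_B|<1200m^{x+20}$, one gets $|Y|\le |X| + 1200m^{x+20}\cdot m^4/6 \le \eta^2 m^x/2 + 200 m^{x+24}\le \eta^2 m^x$, using $\eta^2\ge m^{200}$. Next, since the adjusters in $\mathcal{U}$ are pairwise vertex-disjoint, the number touched by some path in $\mathcal{P}_B$ is at most the total number of vertices in $\mathcal{P}_B$, which is less than $300 m^{x+24}$. Therefore, for $m$ sufficiently large in the relevant regime, the subfamily $\mathcal{U}'\subseteq \mathcal{U}$ of untouched adjusters still satisfies $|\mathcal{U}'|\ge m^{3x}/2$.

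Now I would identify a large target set on the adjuster side. Take $V_A$ to be the union of the $F_1$-expansions over $\mathcal{A}_i\in\mathcal{U}'$; since each expansion has $D=\eta^2/(6000t)$ vertices, $|V_A|\ge m^{3x}\eta^2/(12000t)$. Both $V(B)$ and $V_A$ have size well above $\varepsilon_2\eta^2$, so the sublinear-expander parameter $\varepsilon(\min(|V(B)|,|V_A|))\ge \varepsilon_1/m^2$. A short computation then shows $|Y|\le \min(|V(B)|,|V_A|)\cdot \varepsilon(\min)/4$, so by Lemma \ref{diamter} there is a path $Q$ of length at most $(2/\varepsilon_1)\log^3(15n/(\varepsilon_2\eta^2))\le 2m^3/\varepsilon_1$ from $V(B)$ to $V_A$ that avoids $Y$. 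The endpoint of $Q$ in $V_A$ lies in some $F_1$-expansion of an untouched adjuster $\mathcal{A}_i\in\mathcal{U}'$, and since $F_1$ is a $(D,m^4/600)$-expansion of its core $v_1^{(i)}$, I can extend $Q$ inside $F_1$ to reach $v_1^{(i)}$ at a cost of at most $m^4/600$ further edges. The resulting path has length at most $2m^3/\varepsilon_1+m^4/600\le m^4/6$, is internally disjoint from all paths in $\mathcal{P}_B$ (because its interior lies outside $Y$), avoids $X$, and terminates at an end of an adjuster in $\mathcal{U}'$, i.e.\ different from all currently used adjusters. This contradicts the maximality of $\mathcal{P}_B$.

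The main obstacle will be the numerical bookkeeping: verifying that (i) after subtracting at most $300 m^{x+24}$ touched adjusters from the $m^{3x}$ available ones we still retain a positive fraction, so $V_A$ has the claimed size; (ii) the forbidden set $Y$ is small enough to activate the quantitative version of Lemma \ref{diamter}, which requires carefully playing off $|Y|\le \eta^2 m^x$ against $|V(B)|\ge \eta^2 m^{x+8}/(3000t)$; and (iii) the short detour through the expansion $F_1$ fits within the length budget $m^4/6$. All three follow from the hypotheses $x\ge 10$, $d\ge \log^{s'} n$ (giving $\eta^2\ge m^{200}$), and $n/\eta^2\ge K$ with $K$ large, so that $m$ is large and every polynomial in $m$ we meet is dominated by the relevant target.
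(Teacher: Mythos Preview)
Your proposal is correct and follows essentially the same contradiction-via-diameter-lemma approach as the paper. Two small remarks. First, the extension of $Q$ inside $F_1$ to the core vertex $v_1^{(i)}$ is unnecessary: the paper simply connects $V(B)$ to the union $B_0$ of the ends of a subfamily of untouched adjusters, and landing anywhere in an end already ``connects $V(B)$ to one end,'' so no extra step is needed (your extension is harmless, just superfluous). Second, your bound of $300m^{x+24}$ touched adjusters is coarser than the paper's $1200m^{x+20}$ (they implicitly count only adjusters that are the \emph{target} of some path), and for $x$ close to $10$ neither bound literally yields $|\mathcal{U}'|\ge m^{3x}/2$; the claim is only applied with $x\in\{32,40\}$, where both versions are comfortably valid.
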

\begin{proof}
Suppose to the contrary that $V (B)$ is connected to less than $1200m^{x+20}$ ends from different adjusters in $\mathcal{U}$ via a subpath from a path in $\mathcal{P}_B$. Then we have $|\mathsf{Int}(\mathcal{P}_B)| \le1200m^{x+20}\cdot m^4/6 = 200m^{x+24}$, and there are at least $m^{3x} -1200m^{x+20}$ adjusters in $\mathcal{U}$ untouched by the paths in $\mathcal{P}_B$. We arbitrarily pick $m^{x+5}$ adjusters among those untouched adjusters, and let $B_0$ be the union of their ends. Then $|B_0 | =  \frac{2\eta^2 m^{x+5}}{6000t} = \frac{\eta^2m^{x+5}}{3000t}$. As $d\ge \log^{200} n \ge m^{200}$, we have $|X \cup \mathsf{Int}(\mathcal{P}_B)| \le \eta^2m^x/2+ 200m^{x+24} \le \eta^2m^x$.  According to  (\ref{mm2}), we have $\varepsilon(\frac{\eta^2m^{x+5}}{3000t})\cdot \frac{\eta^2m^{x+5}}{4\cdot3000t}\ge\varepsilon(n)\cdot \frac{\eta^2m^{x+5}}{4\cdot3000t} \ge \eta^2m^{x} \ge |X \cup P|$. Then by Lemma \ref{diamter}, there is a path of length at most $m^4/6$ from $V (B)$ to $V (B_0 )$, avoiding $X \cup \mathsf{Int}(\mathcal{P}_B)$, a contradiction to the maximality of $\mathcal{P}_B$.
\end{proof}

Next, we will use $m^{240}$ pairwise disjoint $(\frac{\eta^2}{6000t}, \frac{m^4}{600}, 1)$-adjusters to construct $m^{40}$ octopuses, and further to build a $(D, m^4/2, 1)$-adjuster. Let $Z$ be the union of the center sets and core vertices of all those adjusters.

\begin{claim}\label{adjuster4}
There are $m^{40}$ $(\frac{\eta^2}{6000t}, m^4/6, 600m^{20}, m^4/6)$-octopuses $\mathcal{B}_j = (A_j , R_j , \mathcal{D}_j, \mathcal{P}_j )$ $(1 \le  j\le m^{40})$ in $G  - W$ such that the following rules hold.
\stepcounter{propcounter}
\begin{enumerate}[label = ({\bfseries \Alph{propcounter}\arabic{enumi}})]
\rm\item\label{App-ocotop-build1}  $A_j$ are pairwise disjoint adjusters and $A_i \notin \mathcal{D}_j$, $1 \le i\neq j \le m^{40}$.

\rm\item\label{App-ocotop-build2} $\mathcal{D}_j$ contains every adjuster which intersects at least one path in $\mathcal{P}_j$ , $1 \le j \le m^{40}$.

\rm\item\label{App-ocotop-build3} Paths in $\mathcal{P}_i$ are vertex disjoint from $Z$ and $A_j$, $1 \le i\neq j \le m^{40}$.

\rm\item\label{App-ocotop-build4} All paths in $\mathcal{P}$ are pairwise vertex disjoint.
\end{enumerate}
\end{claim}
\begin{proof}
Suppose that there exist $q<m^{40}$ octopuses satisfying the above rules so far.  Let $U$ be the union of the vertex sets of the ends of the core adjusters of the existed octopuses. Then we have $|U| \le \frac{m^{40}\cdot 2\eta^2}{6000t} = \frac{\eta^2m^{40}}{3000t}$. We say an adjuster \emph{used} if it is used to construct an octopus, and \emph{unused} otherwise. There are at most $m^{40}\cdot (600m^{20} + 1)$ adjusters are used until now, so there are more than $m^{160}$ unused adjusters. Let $P = \bigcup_{j=1}^{q} V (\mathcal{P}_j )$. Then $|P| \le m^4/6 \cdot 600m^{20}\cdot m^{40} \le m^{65}$. Arbitrarily pick a subfamily $\mathcal{B}$ of $m^{48}$ unused adjusters, and let $B$ be the union of their ends. Then $|B| = \frac{m^{48}\cdot2\eta^2}{6000t} = \frac{\eta^2m^{48}}{3000t}$. Let $\mathcal{U}$ be the family of unused adjusters, except for the adjusters we picked above. Then we  have $|\mathcal{U}|\ge m^{120}$. Let $W_1 = W \cup Z$. Then $|W_1 \cup U \cup P| \le (5D + m^{240}\cdot \frac{m^4}{30}) + \frac{\eta^2m^{40}}{3000t} + m^{65} \le  \eta^2m^{40}/2$ as $d \ge \log^{200} n \ge m^{200}$. Then by Claim \ref{adjuster3} with $(B, \mathcal{U}, x, X) = (B, \mathcal{U}, 40, W_1 \cup U \cup P)$, $V (B)$ can be connected to $1200m^{60}$ ends from different adjusters in $\mathcal{U}$ via some internally vertex-disjoint paths of length at most $m^4/6$ in $G - (W_1 \cup U \cup P)$. By the pigeonhole principle, we can find an adjuster in $\mathcal{B}$, say $A_{q+1}$ such that $A_{q+1}$ has an end $R_{q+1}$ connected to at least $600m^{20}$ adjusters, say $\mathcal{D'}_{q+1}$, via a subfamily of internally $\mathcal{P'}_{q+1}$.

Denote by $L_{q+1}$ the other end of $A_{p+1}$. Obviously, \ref{App-ocotop-build1}, \ref{App-ocotop-build2} and \ref{App-ocotop-build3} hold. As we find paths in $\mathcal{P}_{q+1}$ avoiding $W_1 \cup U \cup P$, \ref{App-ocotop-build4} holds. Thus, $A_{q+1}$, $R_{q+1}$, $\mathcal{D}_{q+1}$ and $\mathcal{P}_{q+1}$ form a $(\frac{\eta^2}{6000t}, \frac{m^4}{600}, 600m^{20}, m^4/6)$-octopus.
\end{proof}

Now we have $m^{40}$ $(\frac{\eta^2}{6000t}, m^4/6, 600m^{20}, m^4/6)$-octopuses $\mathcal{B}_j = (A_j, R_j, \mathcal{D}_j, \mathcal{P}_j )$, $1 \le j \le m^{40}$. Let $B_1=\bigcup_{j=1}^{m^{40}}L_j$. Then $|B_1| = \frac{m^{40}\cdot \eta^2}{6000t}$. It is easy to deduce that there are at most $m^{40}\cdot (600m^{20} + 1)$ adjusters used, and thus at least $m^{160}$ adjusters unused. Let $\mathcal{U}_0$ be the family of these unused adjusters. By Claim \ref {adjuster3} with $(B, \mathcal{U}, x, X) = (B_1, \mathcal{U}_0 , 32, W_1\cup P \cup Q)$, we shall show that $V (B_1)$ can be connected to $1200m^{52}$ ends from distinct adjusters in $U_0$ via some internally vertex-disjoint paths of length at most $m^4/6$ in $G - W_1 - P - Q$. Reset $P =\bigcup^{m^{40}}_{i=1}V(\mathcal{P}_j)$, then $|P| \le m^{65}$. By definition, inside each $\mathcal{B}_j = (A_j, R_j, \mathcal{D}_j, \mathcal{P}_j)$, $j \in [m^{40}]$, every adjuster $A \in \mathcal{D}_j$ intersects $V (\mathcal{P}_j)$ and thus there exists a shortest path in $A$ of length at most $\frac{m^4}{600}$ connecting a core vertex of $A$ to $V(\mathcal{P}_j)$, and denote by $\mathcal{Q}_j$ the disjoint union of such paths taken over all adjusters in $D_j$. Let $Q = \bigcup_{j=1}^{m^{40}}V (\mathcal{Q}_j)$. Then $|Q| \le m^{40}\cdot 600m^{20}\cdot (m^4/600+1) \le m^{65}$. Note that $|W_1 \cup P \cup Q| \le (5D +m^{240}\cdot \frac{m^4}{30}) +m^{64} +m^{65} \le  \eta^2m^{32}/2$ as  $d \ge \log^{200} n \ge m^{200}$.

By the pigeonhole principle, there exists a core adjuster $A_k$ such that $L_k$ is connected to a family $\mathcal{D}_k'$ of at least $600m^{20}$ adjusters, via a subfamily of internally vertex-disjoint paths, denote by $\mathcal{P}_k'$. Then $A_k$, $L_k$, $ \mathcal{D}_k'$ and $\mathcal{P}_k'$ form a $(\frac{\eta^2}{6000t}, \frac{m^4}{600}, 600m^{20}, m^4/6)$-octopus. Note that $(A_k, R_k, \mathcal{D}_k, \mathcal{P}_k)$ is also a $(\frac{\eta^2}{6000t}, \frac{m^4}{600}, 600m^{20}, m^4/6)$-octopus. For the adjuster $A_k$, denote by $C_k$ the center vertex set of $A_k$, and note that $L_k$, $R_k$ are $(\frac{\eta^2}{6000t}, \frac{m^4}{600})$-expansions of vertices $v_1$, $v_2$, respectively. Let $F_1' := G[V (L_k)\cup V (\mathcal{P}_k')\cup V (\mathcal{D}_k')]$, and $F_2'$ be the component of $G[V (R_k)\cup V (\mathcal{P}_k)\cup V (\mathcal{D}_k)]-V (\mathcal{P}_k')$ containing $v_2$. Indeed, paths in $\mathcal{P}_k$ and $\mathcal{P}_k'$ are disjoint from $Z$, and $V (P_k)$ and $V (\mathcal{P}_k')$ are disjoint. Recall that for every adjuster in $\mathcal{D}_k$, every vertex in the ends of the adjuster has at most $d$ neighbors in the adjuster, except for its core vertices. As $d \ge \log^{200} n \ge m^{200}$, and $V (P_k')$ is disjoint from $Z$ and $Q$, $F_2'$ has size at least $|V (\mathcal{D}_k)| - \eta|V (\mathcal{P}_k')| \ge  2\cdot600m^{20}\cdot \frac{\eta^2}{6000t}-\eta \cdot  600m^{20}\cdot m^4/6 \ge \eta^2m^{20}$, and the distance between $v_2$ and each $v \in V (F_2')$ is at most $\frac{m^4}{600} + \frac{m^4}{6} + \frac{m^4}{600} + \frac{m^4}{32} +\frac{m^4}{600} \le m^4/2$. Then by Proposition \ref{3.4}, there exists a subgraph of $F_2'$, denoted by $F_2$, which is a $(\eta^2m^{20}, m^4/2)$-expansion of $v_2$. Similarly, we can find $F_1$, which is a $(\eta^2m^{20}, m^4/2)$-expansion of $v_1$. Recall that $C_k \cup \{v_1, v_2\}$ is an even cycle of length $2r_0 \le \frac{m^4}{16}$, and the distance between $v_1$ and $v_2$ on $C_k \cup \{v_1, v_2\}$ is $r' - 1$. Thus, $(v_1, F_1, v_2, F_2, C_k)$ is a $(\eta^2m^{20}, m^4/2, 1)$-adjuster, and by Proposition \ref{3.4}, there exists a $(D, m^4/2, 1)$-adjuster in $G - W$.
\end{proof}

\section{Proof of Theorem \ref{liusparse}}\label{4.3-le}
In this section we give a proof of Theorem \ref{liusparse}.  To achieve this, we introduce a lemma from  Liu and Montgomery \cite{2023liu}.

\begin{lemma}[Lemma 3.11 in \cite{2023liu}]\label{liu3.11}
For each $k \in \mathbb{N}$ and any $0 < \varepsilon_1, \varepsilon_2 < 1$, there exists $d_0=d_0(\varepsilon_1, \varepsilon_2, k)$ such that the following holds for each $n \ge d \ge d_0$.

Suppose that $G$ is an $n$-vertex bipartite $(\varepsilon_1, \varepsilon_2d)$-expander with $\delta(G) \ge d-1$.
Let $m = \frac{40}{\varepsilon_1 } \log^3 n$. Let $C$ be a shortest cycle in $G$, and let $x_1, \ldots, x_k$ be distinct
vertices in $G$. For each $i$, $j \in [k]$, let $D_{i,j} \in [1, \log^{5k} n]$. Then, there are graphs $F_{i,j} \subseteq G$, $i, j \in [k]$, such that the following hold.

(i) For each $i, j \in [k]$, $F_{i,j}$ is a $(D_{i,j} , 5m)$-expansion around $x_i$ which contains no vertices other than $x_i$ in $V (C) \cup \{x_1, \ldots, x_k\}$.

(ii) The sets $V (F_{i,j} ) \backslash \{x_i\}$, $i, j \in [k]$, are pairwise disjoint.
\end{lemma}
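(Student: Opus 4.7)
I would prove Lemma~\ref{liu3.11} by a greedy construction, building the $k^2$ expansions $F_{i,j}$ one at a time in a fixed (say lexicographic) order on $(i,j)\in[k]^2$. At each stage, maintain the set $U$ of all vertices used so far by previously chosen expansions, excluding the respective centres $x_{i'}$. At stage $(i,j)$, set
\[
W_{i,j}=U\cup V(C)\cup\big(\{x_1,\ldots,x_k\}\setminus\{x_i\}\big),
\]
and find a $(D_{i,j},5m)$-expansion $F_{i,j}$ of $x_i$ inside $(G-W_{i,j})+x_i$. Properties (i) and (ii) then follow automatically from this bookkeeping. Since $\delta(G)\ge d-1\ge 2$ and $G$ is a sublinear expander, its girth is $O(\log n)$ by the Moore bound, so $|V(C)|=O(\log n)$; combined with $|U|\le k^2\log^{5k}n$, this gives the uniform bound
\[
|W_{i,j}|\le k^2\log^{5k} n + O(\log n).
\]

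\textbf{Key step.} I would take $F_{i,j}$ to be a BFS tree rooted at $x_i$ in $G-W_{i,j}$, terminated once exactly $D_{i,j}$ vertices are enrolled. The main content is to verify that this BFS reaches $D_{i,j}$ vertices within depth $5m=\tfrac{200}{\varepsilon_1}\log^3 n$; equivalently, that $|B_{5m}(x_i)|\ge D_{i,j}$ in $G-W_{i,j}$. I would split the growth into two phases. In the \emph{seeding phase} I grow $\{x_i\}$ into a ball of size at least $\varepsilon_2 d$ using the minimum-degree condition $\delta(G)\ge d-1$ together with a frontier-averaging argument to escape $W_{i,j}$. In the \emph{expansion phase}, once $|B_r|\ge\varepsilon_2 d$, the sublinear expansion of $G$ gives
\[
|B_{r+1}\setminus W_{i,j}|\ge |B_r|\big(1+\varepsilon(|B_r|)\big)-|W_{i,j}|,
\]
and as soon as $|B_r|\ge 2|W_{i,j}|/\varepsilon(|B_r|)$ the growth becomes multiplicative by factor $1+\Omega(\varepsilon_1/\log^2 n)$. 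A further $O(\log^3 n/\varepsilon_1)\le m$ BFS steps then drive the ball past any target of size $\le n/2$, in particular past $D_{i,j}$. The total BFS depth is $O(\log^3 n/\varepsilon_1)\ll 5m$ once $d_0=d_0(\varepsilon_1,\varepsilon_2,k)$ is taken large enough.

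\textbf{Main obstacle.} The delicate point is the seeding phase when $|W_{i,j}|$ may well exceed $d$: one cannot literally iterate ``pick an unused neighbour'' because $\delta(G-W_{i,j})$ could be $0$. To handle this I would work inside $G$ itself rather than $G-W_{i,j}$: for any $B_r$ with $1\le|B_r|<\varepsilon_2 d$, the number of edges of $G$ leaving $B_r$ is at least $|B_r|(d-1)-2e(G[B_r])$, and by a counting/averaging argument a positive fraction of these edges land outside $W_{i,j}\cup B_r$ as soon as $|B_r|\gtrsim |W_{i,j}|/d$. Iterating this extension a polylogarithmic number of times suffices to reach the threshold $\varepsilon_2 d$; these extra steps are easily absorbed into the $\log^2 n$ slack in $5m$. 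Once inside the expansion phase the argument is a routine adaptation of the proof of Lemma~\ref{diamter}, and gluing the stages yields the $k^2$ pairwise disjoint expansions as required.
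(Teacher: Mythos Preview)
The paper does not provide its own proof of this statement; it quotes it verbatim as Lemma~3.11 from Liu--Montgomery~\cite{2023liu} and uses it as a black box in the proof of Theorem~\ref{liusparse}. So there is no in-paper argument to compare against, and I can only assess correctness.

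Your greedy scheme is the natural one, but both the seeding phase and the early part of the expansion phase have a genuine gap. The lemma must hold for every $n\ge d\ge d_0$, so in particular for $d=d_0$ a constant depending only on $\varepsilon_1,\varepsilon_2,k$ while $n$ is arbitrarily large. In that regime $|W_{i,j}|\approx k^2\log^{5k}n$ is vastly larger than $d$. Your counting claim ``a positive fraction of the $\ge |B_r|(d-1)-2e(G[B_r])$ outgoing edges land outside $W_{i,j}$ as soon as $|B_r|\gtrsim |W_{i,j}|/d$'' is not justified: the only general bound on edges from $B_r$ into $W_{i,j}$ is $|W_{i,j}|\cdot|B_r|$, which dominates the $\Theta(|B_r|\,d)$ outgoing edges whenever $|W_{i,j}|\gg d$, regardless of $|B_r|$. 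Worse, the putative threshold $|W_{i,j}|/d$ already exceeds $\varepsilon_2 d$ in this regime, so the seeding argument becomes vacuous before the expander property can take over. The same obstruction reappears in the expansion phase: for $\varepsilon_2 d\le |B_r|\le 2|W_{i,j}|/\varepsilon(|B_r|)$ one has $|B_r|\cdot\varepsilon(|B_r|)\approx \varepsilon_1\varepsilon_2 d$, which is far smaller than $|W_{i,j}|$, so the recursion $|B_{r+1}|\ge |B_r|(1+\varepsilon(|B_r|))-|W_{i,j}|$ may stall or decrease. Concretely, nothing you have written rules out the possibility that all $d-1$ neighbours of $x_i$ lie in the previously built expansions.

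The proof in~\cite{2023liu} does not treat the avoided set as arbitrary. It exploits the fact that the obstacles here are a short cycle, $k-1$ marked vertices, and previously constructed expansions which are themselves shallow trees of radius $O(m)$. A BFS ball of radius $r$ centred at $x_i$ can meet such a tree in only $O(r)$ vertices, so the \emph{effective} obstruction at step $r$ is $O(k^2 r+|V(C)|)$ rather than the full $|W_{i,j}|$. This is exactly the ``limited contact'' mechanism underlying their Lemma~3.7 (quoted in this paper as Lemma~\ref{liu3.7}), and it is what allows the growth to proceed even when $d$ is small compared to $\log^{5k}n$. Your argument is missing this structural ingredient and cannot be repaired by adjusting constants.
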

The following Lemma can find  a short path to connect two vertex sets together while avoiding a vertex set, and we postpone its proof later.  
\begin{lemma}\label{5.3-path}
Let $ t\ge s\ge2$ be integers. There exists some $\varepsilon_1 > 0$ such that, for any $0 < \varepsilon_2 < 1/5$ and $k \ge 10$, there exists $d_0=d_0(\varepsilon_1, \varepsilon_2, k)$ such that the following holds for each $n \ge d \ge d_0$. Let $G$ be an $n$-vertex $K_{s,t}$-free bipartite $(\varepsilon_1, \varepsilon_2d)$-expander with $\delta(G) \ge d$.

Suppose $\log^{10} n \le D \le \log^k n$, and $U \subseteq V (G)$ with $|U| \le \frac{D}{2 \log^3 n}$, and let $m = \frac{800}{\varepsilon_1 } \log^3 n$. Suppose $F_1$, $F_2 \subseteq G -U $ are vertex disjoint such that $F_i$ is a $(D, m)$-expansion of $v_i$, for each $i \in [2]$. Let $\log^7 n \le \ell \le n/ \log^{12}n$ be such that $\ell
=\pi(v_1, v_2, G)$ mod 2. Then, there is a $(v_1, v_2)$-path with length $\ell$ in $G - U$.
\end{lemma}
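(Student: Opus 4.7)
The plan is to construct, inside $G-U-V(F_1)-V(F_2)$, a long adjuster $\mathcal{A}=(u_1,F_1',u_2,F_2',A)$ whose set of realizable $(u_1,u_2)$-path lengths is a full arithmetic progression of step $2$ with range at least $\ell$; then attach its two expansion ends to $F_1$ and $F_2$ by vertex-disjoint paths $P,Q$ whose combined length can be tuned across a $2r$-wide interval; finally choose the adjuster's internal path so that $\ell(P)+(\text{adjuster path})+\ell(Q)=\ell$ exactly. The role of the given expansions $F_1,F_2$ is to give us elbow room (of up to $2m$) when we eventually attach $P,Q$ to $v_1,v_2$, while $U$ is small enough ($|U|\le D/(2\log^3 n)$) that it is negligible for every application of the diameter lemma.

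For Step 1 (building the adjuster), set $m_1:=\frac{40}{\varepsilon_1}\log^3 n$ (so that $m=20m_1$) and fix a polylogarithmic threshold $D_0$ with $\log^{10}n\le D_0\le\log^{5k''}n$ for a suitable $k''=k''(k)$. A simple $(D_0,5m_1,1)$-adjuster in any ``clean'' subgraph is obtained by: (i) finding a shortest cycle $C$ via Lemma \ref{diamter} (its length is $O(\log^3 n)$), (ii) picking two vertices $x_1,x_2$ on $C$ at the appropriate distance, and (iii) applying Lemma \ref{liu3.11} with $k=2$ to build vertex-disjoint $(D_0,5m_1)$-expansions of $x_1,x_2$ that avoid $C\setminus\{x_1,x_2\}$. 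Chaining $r$ such simple adjusters (using Lemma \ref{diamter} to find short connecting paths between successive expansion ends, and then re-expanding at the new endpoints through Lemma \ref{liu3.11}) produces a $(D_0,5m_1,r)$-adjuster whose center set has size $O(rm_1)$. We take $r=\lceil\ell/2\rceil+O(m_1)$ so that the range $2r$ exceeds $\ell$; since $\ell\le n/\log^{12}n$, the total footprint of $\mathcal{A}$ stays well below $n/\log^9 n$, preserving enough of $G-U$ for later steps.

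For Step 2 (attaching to $v_1,v_2$), I mimic the greedy length-extension argument of Lemma \ref{path11}, adapted to the sparse regime. Start with an arbitrary pair of short internally-disjoint paths $P_0$ from $v_1$ to a vertex of $F_1'$ (through $F_1$) and $Q_0$ from $v_2$ to a vertex of $F_2'$ (through $F_2$), each of length $O(m)$ by definition of the expansions. Then repeatedly lengthen the shorter of the two paths: since $G-U$ minus the already-used vertices still satisfies $|W|\le 2D/\log^3 n$ and still inherits the $(\varepsilon_1,\varepsilon_2 d)$-expansion property (up to an absolutely negligible change in $\varepsilon_1$), a one-step Koml\'os--Szemer\'edi ball-growing argument using Lemma \ref{diamter} lets us re-route through a fresh $(D_0,5m_1)$-expansion built by Lemma \ref{liu3.11} and thereby enlarge $\ell(P)+\ell(Q)$ by an amount in $[1,O(m)]$. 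We stop the first time $\ell(P)+\ell(Q)$ lands in $[\ell-\ell(\mathcal{A})-2r,\ \ell-\ell(\mathcal{A})]$; since the increments are $O(m)$ and the window has width $2r\gg m$, the stopping rule succeeds before $r$ steps. The parity of $\ell(P)+\ell(Q)$ is then adjusted by at most $\pm 1$ using that the adjuster realizes every parity-$\ell(\mathcal{A})$ integer in $[\ell(\mathcal{A}),\ell(\mathcal{A})+2r]$, together with the hypothesis $\ell\equiv\pi(v_1,v_2,G)\pmod 2$ and bipartiteness of $G$.

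The main obstacle will be the simultaneous bookkeeping in Step 2: each lengthening of $P$ or $Q$ must avoid $U$, the center set $A$ of the long adjuster, the already-built pieces of the other path, and the interiors of the ``consumed'' expansions. Because $\ell$ can be as large as $n/\log^{12}n$, the adjuster $\mathcal{A}$ alone occupies up to $\Theta(n/\log^9 n)$ vertices, so the repeated use of Lemma \ref{diamter} on the residual graph must be justified by the quantitative ``remains true after deleting $x\cdot\varepsilon(x)/4$ vertices'' clause; this is exactly why $D\ge\log^{10}n$ and $|U|\le D/(2\log^3 n)$ are imposed, and why the expansions we construct via Lemma \ref{liu3.11} are taken to be polylogarithmic rather than constant size. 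Once these quantitative thresholds are verified, combining $P$, the adjuster path of length $\ell-\ell(P)-\ell(Q)$, and $Q$ yields the desired $(v_1,v_2)$-path of length exactly $\ell$ in $G-U$.
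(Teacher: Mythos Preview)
Your overall architecture (adjuster plus two attaching paths, parity at the end) is correct, and it is exactly what the paper does. However, your specific parameter choice $r=\lceil \ell/2\rceil+O(m_1)$ for the adjuster is fatal and reflects a misunderstanding of how $\ell(\mathcal{A})$ behaves.

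When you chain $r$ simple adjusters, each simple adjuster contributes its own initial length (at least $1$, and in general $\Theta(m)$ coming from the short cycle found via Lemma~\ref{diamter}), and each of the $r-1$ connecting paths contributes at least $1$ (and again typically $\Theta(m)$). Hence $\ell(\mathcal{A})=\Theta(rm)$, and even in the most optimistic scenario $\ell(\mathcal{A})\ge 2r-1$. With $r\approx \ell/2$ this gives $\ell(\mathcal{A})\gtrsim \ell$, so your target window $[\ell-\ell(\mathcal{A})-2r,\ \ell-\ell(\mathcal{A})]$ for $\ell(P)+\ell(Q)$ is negative or essentially empty; there is no room left for the attaching paths at all. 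Relatedly, the centre set of such an adjuster has size $|A|\le 10mr=\Theta(\ell m)$, which for $\ell$ near $n/\log^{12}n$ is of order $n/\log^{9}n$. You cannot avoid a set of this size in Step~2 when the objects you are connecting (your ``fresh $(D_0,5m_1)$-expansions'') are only polylogarithmic: the clause in Lemma~\ref{diamter} allows you to delete only $x\cdot\varepsilon(x)/4\approx D_0/\log^2 n$ vertices, not $n/\log^9 n$.

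The paper resolves this by reversing the balance: it takes a \emph{small} adjuster with $r=22m$ (so $\ell(\mathcal{A})\le 230m^2$ and $|A|$ is polylogarithmic, obtained from Lemma~\ref{r-adjuster}) and puts essentially all of the length $\ell$ into the two attaching paths. For that second step it invokes Corollary~\ref{5.3-path2} (Corollary~3.15 of \cite{2023liu}), which produces vertex-disjoint paths $P,Q$ with $\bar\ell\le \ell(P)+\ell(Q)\le \bar\ell+22m$ for any $\bar\ell\le n/\log^{10}n$; then a single use of the adjuster's $44m$-wide range and the parity argument finishes. Your Step~2 lengthening procedure is in spirit a hands-on version of that corollary, but it only becomes viable once you shrink $r$ to $O(m)$ and allow $P,Q$ themselves to carry length $\approx\ell$.
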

Now, we prove Theorem \ref{liusparse} by Lemmas \ref{liu3.11} and \ref{5.3-path}.
\begin{proof}[Proof of Theorem \ref{liusparse}]
Let $\varepsilon_1 > 0$ be such that the property in Lemma \ref{5.3-path} holds.
Let $k = 10$, $d_0 = d_0(\varepsilon_1, \varepsilon_2)$ be large and $n \ge d \ge d_0$. Suppose then that $H$ is an $n$-vertex  $K_{s,t}$-free bipartite $(\varepsilon_1, \varepsilon_2d)$-expander with $\delta(H) \ge d$ and let $x$, $y \in V (H)$ be distinct. Let  $\ell \in [\log^7 n, n/ \log^{12}n]$ satisfy $\ell = \pi(x, y, H)$ mod 2. We will show
that $H$ contains an $x, y$ path with length  $\ell$.

Let $m = \frac{800}{\varepsilon_1}\log^3n$  and $D = \log^{10} n$. Then, by Lemma \ref{liu3.11} (applied with $C$ taken to be an arbitrary shortest cycle in $H$), there are vertex disjoint graphs $F_x$, $F_y \subseteq H$ so that $F_x$ is a $(D,m)$-expansion of $x$ and $F_y$ is a $(D,m)$-expansion of $y$. Then, by Lemma \ref{5.3-path} with $U = \emptyset$, there is an $(x, y)$-path with length $\ell$ in $H$, as required.
\end{proof}

\noindent
\textbf{Proof of Lemma \ref{5.3-path}.} \\

\indent
We first give a lemma to  find  a $(D, m, r)$-adjuster  while deleting a vertex set.

\begin{lemma}\label{r-adjuster}
There exists some $\varepsilon_1 > 0$ such that, for any $0 < \varepsilon_2 < 1/5$ and $k \ge 10$,
there exists $d_0 = d_0(\varepsilon_1, \varepsilon_2, k)$ such that the following holds for each $n \ge d \ge d_0$. Suppose that $G$ is an $n$-vertex $K_{s,t}$-free bipartite $(\varepsilon_1, \varepsilon_2d)$-expander with $\delta(G) \ge d$. Let $m = \frac{800}{\varepsilon_1}\log^3n$. Suppose $\log^{10} n \le D\le \log^k n$, $1 \le r \le 30m$ and $U\subseteq V (G)$ with $|U| \le D$. Then, there is a $(D, m, r)$-adjuster in $G - U$.
\end{lemma}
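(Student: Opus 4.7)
The plan is to prove Lemma \ref{r-adjuster} by induction on $r$, mirroring almost verbatim the argument used for Lemma \ref{adjuster2} earlier in the paper: the base case $r=1$ is obtained from a short even cycle together with two disjoint sublinear expansions, and the inductive step attaches a fresh simple adjuster to an existing $(D,m,r-1)$-adjuster through a short connecting path supplied by Lemma \ref{diamter}.

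For the base case $r=1$: since $|U| \le D \le \log^k n$ is vanishingly small compared to $d \ge d_0$, the graph $G' := G - U$ remains a bipartite $(\varepsilon_1', \varepsilon_2 d)$-expander with $\delta(G') \ge d-1$ for some $\varepsilon_1' \approx \varepsilon_1$. Let $C$ be a shortest cycle of $G'$; by the standard BFS/expansion argument (compare the proof of Lemma \ref{adjuster1}) its length is at most $m/16$, and bipartiteness forces $|C| = 2 r_0$ for some integer $r_0$. Pick vertices $v_1, v_2 \in V(C)$ at distance $r_0 - 1$ along $C$, so that the two arcs between them have lengths $r_0 - 1$ and $r_0 + 1$, automatically providing property \textbf{A4} with $k = 1$. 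Now invoke Lemma \ref{liu3.11} in $G'$ with its parameter $k=2$, centers $x_1 = v_1$, $x_2 = v_2$ and $D_{i,j} = D$ to obtain vertex-disjoint $(D, 5 \cdot \tfrac{40}{\varepsilon_1} \log^3 n)$-expansions $F_1$, $F_2$ around $v_1$, $v_2$ whose only vertex on $V(C)$ is $v_i$ itself. Since $5 \cdot \tfrac{40}{\varepsilon_1} \log^3 n \le m$, the tuple $(v_1, F_1, v_2, F_2, V(C))$ is the desired $(D, m, 1)$-adjuster.

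For the inductive step, assume $1 < r \le 30m$ and that the statement holds for $r-1$, yielding a $(D, m, r-1)$-adjuster $\mathcal{A}_1 = (v_1, F_1, v_2, F_2, A_1)$ in $G - U$. Set $W_1 := U \cup A_1 \cup V(F_1) \cup V(F_2)$; then $|W_1| \le D + 10 m(r-1) + 2D \le 400 m^2$. This bound may exceed $D$, but the base-case construction only needs $|W_1| \ll d$ and a small amount of expansion in $G - W_1$, both of which remain true because $d_0$ is chosen much larger than any fixed power of $\log n$. Running the base case inside $G - W_1$ produces a simple adjuster $\mathcal{A}_2 = (v_3, F_3, v_4, F_4, A_2)$ disjoint from $\mathcal{A}_1 \cup U$. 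The two sets $V(F_1) \cup V(F_2)$ and $V(F_3) \cup V(F_4)$ each have size $2D \ge \log^{10} n$, while the ``forbidden'' set $U \cup A_1 \cup A_2$ has size $O(m^2)$; since $\varepsilon(2D) \cdot 2D / 4 \ge (\varepsilon_1 / \log^2 n) \cdot \log^{10} n / 2 \gg m^2$, Lemma \ref{diamter} furnishes a path $P$ of length at most $\tfrac{2}{\varepsilon_1} \log^3 n \le m/4$ between the two sets that avoids $U \cup A_1 \cup A_2$. After possibly relabelling, $P$ joins $V(F_1)$ to $V(F_3)$, and routing through the expansions $F_1$ and $F_3$ extends it to a $(v_1, v_3)$-path $Q$ of length at most $3m$.

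I claim $\mathcal{A} := (v_2, F_2, v_4, F_4, A_1 \cup A_2 \cup V(Q))$ is a $(D, m, r)$-adjuster. Properties \textbf{A1} and \textbf{A2} hold by disjointness, and \textbf{A3} follows from $|A_1 \cup A_2 \cup V(Q)| \le 10m(r-1) + 10m + 3m \le 10 m r$. For \textbf{A4}, given $i \in \{0, 1, \dots, r\}$, write $i = i_1 + i_2$ with $i_1 \in \{0, \dots, r-1\}$ and $i_2 \in \{0, 1\}$, take a $(v_1, v_2)$-path $P_1$ of length $\ell(\mathcal{A}_1) + 2 i_1$ inside $G[A_1 \cup \{v_1, v_2\}]$ and a $(v_3, v_4)$-path $P_2$ of length $\ell(\mathcal{A}_2) + 2 i_2$ inside $G[A_2 \cup \{v_3, v_4\}]$; then $P_1 \cup Q \cup P_2$ is a $(v_2, v_4)$-path of length $\ell(\mathcal{A}_1) + \ell(\mathcal{A}_2) + \ell(Q) + 2 i$, as required. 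The main technical wrinkle is the parameter bookkeeping: one must verify that, as the forbidden set grows to size $O(m^2)$ after $r = \Theta(m)$ rounds of induction, both the base-case cycle construction and the Lemma \ref{diamter} connection step retain enough slack. This is secured by the hierarchy $|U \cup A_1 \cup A_2| = O(m^2) = O(\log^6 n) \ll \log^{10} n \le 2D$, together with the fact that $d \ge d_0$ was chosen after $k$ (hence after $m$), so all the required inequalities hold by a polylogarithmic margin.
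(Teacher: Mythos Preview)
Your inductive step is essentially identical to the paper's, and the connection via Lemma~\ref{diamter} together with the bookkeeping $|A_1\cup A_2\cup V(Q)|\le 10mr$ goes through exactly as you describe.

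The genuine gap is in the base case. You assert that $G':=G-U$ ``remains a bipartite $(\varepsilon_1',\varepsilon_2 d)$-expander with $\delta(G')\ge d-1$'' because ``$|U|\le D\le\log^k n$ is vanishingly small compared to $d\ge d_0$''. This is false: the constant $d_0=d_0(\varepsilon_1,\varepsilon_2,k)$ does not depend on $n$, so for $d=d_0$ and $n$ large we have $|U|\le\log^k n\gg d$. Removing $U$ may then annihilate the minimum degree (indeed $\delta(G-U)\ge d-|U|$ can be negative) and destroy expansion for small sets, so neither the short-cycle argument nor Lemma~\ref{liu3.11} applies to $G-U$. The same error recurs in your inductive step when you ``run the base case inside $G-W_1$'': you estimate $|W_1|\le 400m^2$, but in fact $|W_1|\le 3D+300m^2\le 4D$ (since $D\ge\log^{10}n\gg m^2$), and $4D$ can again be much larger than $d$. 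Your final remark that ``$d_0$ was chosen after $k$ (hence after $m$)'' confuses $m$ with a constant; $m$ grows with $n$.

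This is precisely why the paper isolates the base case as a separate Lemma~\ref{1-adjuster}, whose proof is long and indirect: it works by contradiction, builds a large family of small adjusters via Claim~\ref{claim1liu} (passing to a sub-expander of $G-U-W'$ where enough edges survive), and then uses Lemmas~\ref{liu3.7} and~\ref{liu3.12} through Claims~\ref{claim2liu}--\ref{claim3liu} to force one such adjuster to grow to size $D$. The whole point of that machinery is to cope with a deleted set $U$ that may dwarf $d$; a direct ``$G-U$ is still an expander'' shortcut is unavailable here.
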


The following corollary is from \cite{2023liu}, which helps us to connect two sets of vertices using two vertex-disjoint paths  while avoiding a smaller vertex set.
\begin{corollary}[Corollary 3.15 in \cite{2023liu}]\label{5.3-path2}
For any $0 < \varepsilon_1, \varepsilon_2 < 1$, there exists $d_0 = d_0(\varepsilon_1, \varepsilon_2)$ such that the following holds for each $n\ge d\ge d_0$. Suppose that $G$ is an $n$-vertex bipartite $(\varepsilon_1, \varepsilon_2d)$-expander with $\delta(G) \ge d$.

Let $\log^{10} n \le D \le  n/ \log^{10} n$, $\frac{100}{\varepsilon_1} \log^3 n \le m \le \log^4n$ and $\ell \le  n/\log^{10}n$. Let $A \subseteq V (G)$ satisfy $|A| \le  D/\log^3 n$. Let $F_1, \ldots, F_4 \subseteq G - A$ be vertex-disjoint subgraphs and $v_1, \ldots, v_4$ be vertices such that, for each $i \in [4]$, $F_i$ is a $(D,m)$-expansion of $v_i$. Then, $G - A$ contains vertex-disjoint paths $P$ and $Q$ with  $\ell\le \ell(P) +  \ell(Q) \le\ell+ 22m$ such that both $P$ and $Q$ connect $\{v_1, v_2\}$ to $\{v_3, v_4\}$.
\end{corollary}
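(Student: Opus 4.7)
The plan is to prove Lemma \ref{r-adjuster} by induction on $r$, following the blueprint of Lemmas \ref{adjuster1} and \ref{adjuster2} earlier in the paper but adapted to the sparse regime: here expansion is controlled by $\varepsilon_2 d$ rather than $\varepsilon_2\eta^2$, and the required $(D,m)$-expansions must be supplied by Lemma \ref{liu3.11} (since in this range the $K_{s,t}$-free neighbourhood bound is too weak to build expansions directly).

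For the base case $r=1$, first use Lemma \ref{diamter} in $G-U$ (the deletion $|U|\le D=O(\log^k n)$ is polylogarithmic and well within the expansion tolerance) to locate a shortest cycle $C$ of length $2r_0\le O(\log^3 n/\varepsilon_1)\ll m$, which is even since $G$ is bipartite. Choose $v_1,v_2\in V(C)$ at distance $r_0-1$ on $C$, so that $C$ supplies two $(v_1,v_2)$-paths of lengths $r_0\pm1$, matching the two-consecutive-even-increments requirement of \textbf{A4} with $k=1$. Next invoke Lemma \ref{liu3.11} with parameter $k':=\lceil k/5\rceil+1$ and vertices $x_1=v_1,x_2=v_2$ inside $G-U$ (the condition $D\le\log^{5k'}n$ holds with room) to produce vertex-disjoint $(D,5\cdot\tfrac{40}{\varepsilon_1}\log^3 n)$-expansions $F_1,F_2$ of $v_1,v_2$ avoiding $V(C)\cup U$ apart from the centers; since $5\cdot\tfrac{40}{\varepsilon_1}\log^3 n\le m$, these are in particular $(D,m)$-expansions. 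The tuple $(v_1,F_1,v_2,F_2,V(C)\setminus\{v_1,v_2\})$ is then a $(D,m,1)$-adjuster in $G-U$.

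For the inductive step $r\ge 2$, given a $(D,m,r-1)$-adjuster $\mathcal{A}_1=(v_1,F_1,v_2,F_2,A_1)$ in $G-U$, set $W_1:=U\cup A_1\cup V(F_1)\cup V(F_2)$; then $|W_1|\le D+10(r-1)m+2D\le 4D$, using $r\le 30m$ and the polylogarithmic gap $D\ge\log^{10}n\gg m^2$. A mild strengthening of the base case (with the hypothesis $|U|\le 4D$ in place of $|U|\le D$, still admissible by the same expansion estimates) yields a simple adjuster $\mathcal{A}_2=(v_3,F_3,v_4,F_4,A_2)$ in $G-W_1$. Lemma \ref{diamter} applied with forbidden set $U\cup A_1\cup A_2$ (of size at most $D+20rm$, still much less than $\varepsilon(2D)\cdot 2D/4$) produces a path of length at most $m$ from $V(F_1)\cup V(F_2)$ to $V(F_3)\cup V(F_4)$; WLOG from $V(F_1)$ to $V(F_3)$, and it extends through the expansions $F_1,F_3$ to a $(v_1,v_3)$-path $Q$ of length at most $3m$. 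The output $(v_2,F_2,v_4,F_4,A_1\cup A_2\cup V(Q))$ satisfies \textbf{A1}--\textbf{A3} by construction, and \textbf{A4} by the standard concatenation: for any $i\in\{0,\ldots,r\}$, write $i=i_1+i_2$ with $i_1\in\{0,\ldots,r-1\}$ and $i_2\in\{0,1\}$, then glue the length-$(\ell(A_1)+2i_1)$ $(v_1,v_2)$-path inside $\mathcal{A}_1$, the fixed path $Q$, and the length-$(\ell(A_2)+2i_2)$ $(v_3,v_4)$-path inside $\mathcal{A}_2$ to produce a $(v_2,v_4)$-path inside $G[A_1\cup A_2\cup V(Q)\cup\{v_2,v_4\}]$ of length $\ell+2i$, where $\ell=\ell(A_1)+\ell(A_2)+\ell(Q)$.

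The main obstacle is the parameter accounting across the induction: each round spends part of the deletion budget that powers Lemmas \ref{diamter} and \ref{liu3.11}, and the base case must remain applicable inside $G-W_1$ as $|W_1|$ grows to $\sim 4D$. The key resource is the polylogarithmic slack $D\ge\log^{10}n$ versus $m,r=O(\log^3 n)$, which keeps every accumulated forbidden set polynomially smaller than $D$ and hence well below the $\varepsilon(\cdot)$-scaled expansion threshold; care in choosing the Lemma \ref{liu3.11} parameter $k'$ ensures its upper bound $\log^{5k'}n$ dominates $D\le\log^k n$. A secondary subtlety is parity: bipartiteness of $G$ forces the shortest cycle to have even length, so the two cycle-paths in the base case automatically have matching parity as required by \textbf{A4}, and this parity is then threaded consistently through each inductive concatenation of the fixed bridge $Q$ with the adjuster-internal paths of even-shifted lengths.
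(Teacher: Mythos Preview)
Your proposal targets the wrong statement. The statement you were given is Corollary~\ref{5.3-path2}, which asks for two vertex-disjoint paths $P,Q$ in $G-A$ connecting $\{v_1,v_2\}$ to $\{v_3,v_4\}$ with $\ell\le \ell(P)+\ell(Q)\le \ell+22m$. Your write-up instead sets out to prove Lemma~\ref{r-adjuster}, i.e.\ the existence of a $(D,m,r)$-adjuster in $G-U$. These are different objects and different conclusions; nothing in your argument produces the two paths with controlled total length that Corollary~\ref{5.3-path2} requires. Note also that the paper does not prove Corollary~\ref{5.3-path2} at all: it is imported verbatim as Corollary~3.15 of \cite{2023liu}, so there is no in-paper proof to compare against.

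Even read as an attempt at Lemma~\ref{r-adjuster}, your base case has a genuine gap. You propose to ``invoke Lemma~\ref{liu3.11} \ldots inside $G-U$'' to obtain $(D,m)$-expansions of $v_1,v_2$ avoiding $U$. But Lemma~\ref{liu3.11} is stated for the expander $G$ itself and only guarantees that the expansions avoid $V(C)\cup\{x_1,\ldots,x_k\}$; it has no mechanism for avoiding an additional set $U$ of size up to $D$ (which may be as large as $\log^k n$), and $G-U$ need not be an $(\varepsilon_1,\varepsilon_2 d)$-expander. This is precisely why the paper's proof of Lemma~\ref{r-adjuster} does not take your shortcut: its base case is delegated to Lemma~\ref{1-adjuster}, whose proof is substantially more delicate (octopus-style arguments, the sets $U_0,U_1$, the families $\mathbf{A}_0,\mathbf{A}_1,\mathbf{A}_2$, and repeated applications of Lemma~\ref{liu3.7}) exactly to cope with the deleted set $U$. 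Your inductive step, by contrast, matches the paper's: once a simple adjuster is available in $G-W_1$, the linking via Lemma~\ref{diamter}/\ref{liu-diam} and the concatenation $P_1\cup Q\cup P_2$ is the same as the paper's.
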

Now, we finish the proof of Lemma \ref{5.3-path}.
\begin{proof}[Proof of Lemma \ref{5.3-path}]
There is a $(D, m, 22m)$-adjuster, $\mathcal{A} = (v_3, F_3, v_4, F_4, A)$ in $G - U$, and $\ell (\mathcal{A}) \le |A| + 1 \le 230m^2$ by Lemma \ref{r-adjuster}. Let $\bar{\ell} = \ell - 22m - \ell (\mathcal{A})$. Then $0 \le \bar{\ell}\le \frac{n}{ \log^{12}n}$. As $|A\cup U| \le 230m^2+\frac{D}{2 \log^3 n} \le \frac{D}{\log^3 n}$, there are paths $P$ and $Q$ in $G-U-A$ which are vertex disjoint, both connect $\{v_1, v_2\}$ to $\{v_3, v_4\}$ by Corollary \ref{5.3-path2}. Then $\bar{\ell} \le\ell (P)+\ell (Q) \le \bar{\ell} +22m$. Without loss of generality, we assume that $P$ is a $(v_1, v_3)$-path and $Q$ is a $(v_2, v_4)$-path. Now, $0 \le \ell -\ell (P) -\ell (Q) -\ell (A) \le 22m$. As $\mathcal{A}$ is a $(D, m, 22m)$-adjuster, there is a $v_3$, $v_4$-path in $G[A \cup \{v_3, v_4\}]$ with length $\ell (\mathcal{A})$, and therefore $\ell (\mathcal{A}) =\pi(v_3, v_4, G)$ mod 2. Then, as $\ell (P) = \pi(v_1, v_3, G)$ mod 2, $\ell (Q) = \pi(v_2, v_4, G)$ mod 2, $\ell = \pi(v_1, v_2, G)$ mod 2 and $\pi(v_1, v_2, G) = \pi(v_1, v_3, G) + \pi(v_3, v_4, G) +\pi(v_4, v_2, G)$ mod 2, we have $\ell - \ell (P) - \ell (Q) - \ell (\mathcal{A}) = 0$ mod 2. That is, there is some $i \in \mathcal{N}$ with $2i = \ell - \ell (P) - \ell (Q) - \ell (\mathcal{A})$, where $i \le 11m$.
Therefore, by the property of the adjuster, there is a $(v_3, v_4)$-path, $R$ say, with length $ \ell (\mathcal{A})+2i = \ell - \ell (P) - \ell (Q)$ in $G[A\cup\{v_3, v_4\}]$. Thus, $P \cup R\cup Q$ is a $(v_1, v_2)$-path with length $\ell$ in $G - U$.
\end{proof}

\noindent
\textbf{Proof of Lemma \ref{r-adjuster}}\\

\indent
The following  lemma shows that a simple adjuster can be found even removing a vertex set.
\begin{lemma}\label{1-adjuster}
There exists some $\varepsilon_1 > 0$ such that, for any $0 < \varepsilon_2 < 1$ and $k \in \mathbb{N}$,
there exists $d_0 = d_0(\varepsilon_1, \varepsilon_2, k)$ such that the following holds for each $n \ge d \ge d_0$. Suppose that $G$ is an $n$-vertex $K_{s,t}$-free bipartite $(\varepsilon_1, \varepsilon_2d)$-expander with $\delta(G) \ge d$. Let $m = \frac{200}{\varepsilon_1}\log^3n$. Suppose $D\le \log^k n$, $1 \le r \le 30m$ and $U\subseteq V (G)$ with $|U| \le 10D$. Then, there is a $(D, m, 1)$-adjuster in $G - U$.
\end{lemma}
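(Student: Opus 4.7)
The plan is to build the simple adjuster in $G - U$ as a short even cycle together with two vertex-disjoint expansions attached at two of its vertices. Concretely, I will first produce an even cycle $C$ of length $2r' \le m/60$ inside $G - U$, then choose $v_1, v_2 \in V(C)$ at cyclic distance $r' - 1$ (so that the two $(v_1,v_2)$-arcs of $C$ have lengths $r'-1$ and $r'+1$, differing by $2$ as required by axiom \textbf{A4} with $k=1$), and finally attach vertex-disjoint $(D, m)$-expansions $F_1, F_2$ of $v_1$ and $v_2$ inside $G - U - (V(C) \setminus \{v_1, v_2\})$. Setting $A = V(C) \setminus \{v_1, v_2\}$, the tuple $(v_1, F_1, v_2, F_2, A)$ automatically satisfies \textbf{A1}, \textbf{A2}, \textbf{A3} (since $|A| = 2r' - 2 \le 10m$), and \textbf{A4} from the cycle arcs.

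For the cycle I pick any edge $uv$ in $G - U$ and find a short $(u, v)$-path in $G - U - \{uv\}$. To do so I grow BFS balls $X_u, X_v$ from $u, v$ in $G - U$: iterating the sublinear-expansion lower bound $|N(X)| \ge \varepsilon(|X|)\cdot |X|$, within $O(\log^2 n \log\log n) \le m/400$ steps each ball reaches size $\Theta(\log^{k+2} n)$. At this scale $x$ one checks $x \cdot \varepsilon(x)/4 \ge |U|$, so the deletion-tolerance clause of Lemma~\ref{diamter} yields a path between $X_u$ and $X_v$ in $G - U$ of length at most $\frac{2}{\varepsilon_1} \log^3(15 n / (\varepsilon_2 d)) \le m/400$. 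Concatenating this path with the two rooted BFS segments gives a $(u, v)$-path $P$ in $G - U - \{uv\}$ of length at most $m/60 - 1$, and $C := P + uv$ is the desired even cycle of length $2r' \le m/60$ through $u$ and $v$.

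For the expansions I invoke Lemma~\ref{liu3.11} with its parameter $k$ set to $\max(k, 2)$, its vertices $x_1, x_2$ taken as $v_1, v_2$, its cycle as $C$, and sizes $D_{i, 1} = D$. The one modification to its proof is to additionally forbid $U$ in each BFS growth step; this is harmless for the same bootstrap reason as above, since once growth has reached size $\Theta(\log^{k+2} n)$ the deletion tolerance of Lemma~\ref{diamter} absorbs $U$. The output is a pair of vertex-disjoint $(D, m)$-expansions $F_1, F_2$ (note that $5$ times Lemma~\ref{liu3.11}'s internal $m' = (40/\varepsilon_1)\log^3 n$ equals our $m = (200/\varepsilon_1)\log^3 n$), rooted at $v_1, v_2$ and disjoint from $V(C) \setminus \{v_1, v_2\}$ and from $U$. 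Assembling the pieces gives the required $(D, m, 1)$-adjuster.

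The main obstacle is that in the sparse regime we only have $d \ge d_0$ for some constant $d_0 = d_0(\varepsilon_1, \varepsilon_2, k)$, while $|U| \le 10D \le 10\log^k n$ can exceed $d$ by arbitrarily much (e.g.\ if $n$ is large but $d = d_0$). Consequently any argument depending on $\delta(G - U)$ being close to $d$, or on neighborhoods $N(u), N(v)$ being comparable in size to $|U|$, fails. The uniform cure is the bootstrap step: sublinear expansion inflates any singleton to size $\Theta(|U|\log^2 n) = \Theta(\log^{k+2} n)$ in at most $O(\log^3 n) \le m$ BFS hops, and at that scale $x\cdot\varepsilon(x)/4 \ge |U|$, so both Lemma~\ref{diamter} and the growth steps inside Lemma~\ref{liu3.11} remain applicable after deleting $U$. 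Once this point is absorbed, the rest is routine assembly of the cycle and the two expansions into the adjuster.
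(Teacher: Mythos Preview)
Your proposal has a genuine gap in the bootstrap step, and this gap is exactly why the paper's proof is so elaborate.

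You grow BFS balls $X_u, X_v$ ``in $G-U$'' while invoking the expansion bound $|N(X)|\ge \varepsilon(|X|)\,|X|$. But that bound is for $G$, not for $G-U$. In $G-U$ one only has $|N_{G-U}(X)|\ge \varepsilon(|X|)\,|X|-|U|$, which is vacuous throughout the entire range $d\le |X|\ll |U|\log^2 n/\varepsilon_1$ that your bootstrap is supposed to traverse. Since $|U|$ may be $10\log^k n$ while $d$ is only the fixed constant $d_0$, the ball around $u$ in $G-U$ can stall immediately (indeed $N_G(u)$ could lie almost entirely in $U$). Growing the balls in $G$ instead does not help: the BFS tree may route every $u$-to-boundary path through $U$, so you obtain no $(D,m)$-expansion of $u$ inside $G-U$ and no $(u,v)$-path avoiding $U$. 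The same objection applies to your ``harmless modification'' of Lemma~\ref{liu3.11}: that lemma only avoids a shortest cycle $C$ and finitely many prescribed points, of total size $O(\log^3 n)$, whereas you want it to avoid a set of size $10\log^k n$ while producing expansions of size $\log^k n$; absorbing this is precisely the hard part, and in Liu--Montgomery the proof of Lemma~\ref{liu3.11} itself relies on Lemma~\ref{liu3.7}.

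The paper's proof does not attempt a direct construction. It argues by contradiction: assuming no $(D,m,1)$-adjuster exists in $G-U$, it first passes to subexpanders of $G-U$ (where the deleted set plays no role) to produce $n^{1/4}$ pairwise well-separated small adjusters with parameters depending on the subexpander, then filters this collection twice (avoiding high-degree vertices $L$, then avoiding a large set $Z$), and finally applies Lemma~\ref{liu3.7} to force one of these small adjusters to grow an end to size $D$ inside $G-U$, giving the desired $(D,m,1)$-adjuster and a contradiction. The whole point of Lemma~\ref{liu3.7} is to certify that among many well-separated seeds of small size, at least one expands to polylogarithmic size even after deleting a polylog-sized set; your bootstrap claims this for a single arbitrary seed, which is false in general.
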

The following Lemma provides a short path to connect two vertex sets together while avoiding a vertex set.  
\begin{lemma}[Lemma 3.4 in \cite{2023liu}]\label{liu-diam}
For each $0 < \varepsilon_1, \varepsilon_2 < 1$, there exists $d_0 = d_0(\varepsilon_1, \varepsilon_2)$ such that the following holds for each $n \ge d \ge d_0$ and $x \ge 1$. Suppose $G$ is an $n$-vertex $(\varepsilon_1, \varepsilon_2d)$-expander with $\delta(G) \ge d-1$. Let $A, B \subseteq V (G)$ with $|A|$, $|B| \ge x$, and let $W \subseteq V (G) \backslash (A \cup B)$ satisfy $|W| \log^3n \le 10x$. Then, there is a path from $A$ to $B$ in $G - W$ with length at most $\frac{40}{\varepsilon_1} \log^3 n$.
\end{lemma}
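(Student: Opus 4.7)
The plan is to run a breadth-first search (BFS) from $A$ and from $B$ simultaneously in $G-W$ and use the sublinear expansion together with the hypothesis $|W|\log^3 n\le 10x$ to force the two BFS balls to intersect within $i^*\le 20\log^3 n/\varepsilon_1$ rounds. Set $A_0=A$, $B_0=B$, and iteratively $A_{i+1}=A_i\cup N_{G-W}(A_i)$, $B_{i+1}=B_i\cup N_{G-W}(B_i)$. Once $A_{i^*}\cap B_{i^*}\neq\emptyset$, concatenating a BFS path from $A$ to a common vertex with one back to $B$ yields an $A$-to-$B$ path in $G-W$ of length at most $2i^*$.

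The first substep is a short bootstrap that pushes the balls past the expansion threshold $\varepsilon_2 d/2$. Since $\delta(G)\ge d-1$, a single vertex of $A$ already has $d-1$ neighbours in $G$, so $|A\cup N_G(A)|\ge d$. Combined with $|W|\le 10x/\log^3 n\le 10n/\log^3 n\le d/4$ for $d\ge d_0$ sufficiently large in terms of $\varepsilon_1,\varepsilon_2$, this yields $|A_1|\ge d-|W|\ge 3d/4\ge \varepsilon_2 d/2$, and likewise $|B_1|\ge 3d/4$. (If $x\ge \varepsilon_2 d/2$ this step is not even needed, since $|A_0|\ge x$ is already past the threshold.)

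Next, suppose $\max\{x,\varepsilon_2 d/2\}\le |A_i|\le n/2$. By the sublinear expansion of $G$ and $|A_i|\le n$,
\[
|N_G(A_i)|\ge \varepsilon(|A_i|,\varepsilon_1,\varepsilon_2 d)\cdot|A_i|\ge \frac{\varepsilon_1}{4\log^2 n}\cdot|A_i|.
\]
Since $|A_i|\ge |A_0|\ge x$, the hypothesis on $W$ gives $|W|\le 10x/\log^3 n\le 10|A_i|/\log^3 n$, which is at most $\varepsilon_1|A_i|/(8\log^2 n)$ provided $\log n\ge 80/\varepsilon_1$. Hence
\[
|A_{i+1}|\ge |A_i|+|N_G(A_i)|-|W|\ge \Bigl(1+\tfrac{\varepsilon_1}{8\log^2 n}\Bigr)|A_i|.
\]
Iterating this from $|A_1|\ge d/2$ and using $\log(1+\delta)\ge \delta/2$ for small $\delta$, we reach $|A_{i^*}|>n/2$ at some step $i^*\le 1+16\log^3 n/\varepsilon_1$, and by symmetry $|B_{i^*}|>n/2$.

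Finally, $|A_{i^*}|+|B_{i^*}|>n\ge |V(G)\setminus W|$ together with $A_{i^*},B_{i^*}\subseteq V(G)\setminus W$ forces $A_{i^*}\cap B_{i^*}\neq\emptyset$. A common vertex then yields an $A$-to-$B$ path in $G-W$ of length at most $2i^*\le 40\log^3 n/\varepsilon_1$, as required. The main technical obstacle is the bookkeeping in the growth step: one must verify that the expansion gain $\varepsilon(|A_i|)|A_i|$ dominates the loss $|W|$ at every iteration. This is exactly where the uniform bound $|A_i|\ge x$ combines with the hypothesis $|W|\log^3 n\le 10x$, and it is what dictates the precise choice of the threshold $d_0(\varepsilon_1,\varepsilon_2)$.
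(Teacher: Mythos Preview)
The paper does not prove this lemma; it is quoted verbatim as Lemma~3.4 from Liu and Montgomery \cite{2023liu} and used as a black box. Your BFS/expansion argument is exactly the standard proof of such robust small-diameter lemmas (it is the same mechanism behind Lemma~\ref{diamter}), so there is nothing to compare against in this paper and your approach is the expected one.

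One genuine slip in your bootstrap paragraph: the chain $|W|\le 10x/\log^3 n\le 10n/\log^3 n\le d/4$ is false in general, since $n$ can be arbitrarily large compared to $d$ (think $n=2^d$). What actually makes the bootstrap work is precisely your parenthetical case split: if $x\ge \varepsilon_2 d/2$ you already have $|A_0|\ge \varepsilon_2 d/2$ and no bootstrap is needed; if $x<\varepsilon_2 d/2$ then $|W|\le 10x/\log^3 n < 5\varepsilon_2 d/\log^3 n \le d/4$ once $\log^3 n\ge \log^3 d_0\ge 20\varepsilon_2$. Promote that parenthetical to the main line and delete the incorrect inequality chain. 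Everything else---the growth estimate $|A_{i+1}|\ge (1+\varepsilon_1/(8\log^2 n))|A_i|$, the iteration count $i^*\le 1+16\log^3 n/\varepsilon_1$, and the final pigeonhole $|A_{i^*}|+|B_{i^*}|>n$---is correct and gives the stated bound $2i^*\le 40\log^3 n/\varepsilon_1$.
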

\begin{proof}[Proof of Lemma \ref{r-adjuster}]
We prove the lemma by induction on $r$. When $r = 1$, it is easy to find a $(D, m, 1)$-adjuster in $G - W$. Next, we assume that there  exists a $(D, m, r)$-adjuster in $G-W$ for  some $r$ with $1 \le r < 30m$, denoted by $\mathcal{A}_1 = (v_1, F_1, v_2, F_2, A_1)$. Let $W_1 = W \cup A_1\cup V (F_1)\cup V (F_2)$, and we have $|W_1| \le 4D\le\log^{2k}n$. By the arguments mentioned above, we have that $G - W_1$ contains a $(D,  m, 1)$-adjuster, denoted by  $\mathcal{A}_2= (v_3, F_3, v_4, F_4, A_2)$. We claim that there exists a path $P$ of length at most $m$, from $V (F_1)\cup V (F_2)$ to $V (F_3)\cup V (F_4)$ avoiding $W \cup A_1\cup A_2$. Note that $|F_1\cup F_2| = |F_3\cup F_4|= 2D $, and $|W \cup A_1\cup A_2| \le \frac{D}{\log^3n}  + 20rm  \le \frac{2D}{\log^3n}$. Then by Lemma \ref{liu-diam}, such path exists, as claimed.

We may assume that $P$ is a path from $V (F_1)$ to $V (F_3)$. Then we can get a $(v_1, v_3)$-path $Q\subseteq (F_1 \cup P \cup F_3)$ of length at most $3m$, due to $F_1$ and $F_3$ are $(D, m)$-expansions of $v_1$ and $v_3$, respectively. We claim that $(v_2, F_2, v_4, F_4, A_1 \cup A_2 \cup V (Q))$ is a $(D, m, r+ 1)$-adjuster. Indeed, $V(F_2)\cap V(F_4)\cap (A_1 \cup A_2 \cup V (Q))=\emptyset$ and $|A_1\cup A_2\cup V (Q)| \le 10mr+10\cdot(m/2)+3m \le 10(r+1)m$. Let $\ell = \ell(A_1)+\ell(A_2)+\ell(Q)$. For every $i \in \{0, 1, \ldots, r + 1\}$, there are some $i_1 \in \{0, 1, \ldots, r\}$ and $i_2 \in \{0, 1\}$ such that $i = i_1 + i_2$. Let $P_1$ be a $(v_1, v_2)$-path in $G[A_1 \cup \{v_1, v_2\}]$ of length $\ell(A_1) + 2i_1$ and let $P_2$ be a $(v_3, v_4)$-path in $G[A_2 \cup \{v_3, v_4\}]$ of length $\ell(A_2) + 2i_2$. Hence, $P_1 \cup Q \cup P_2$ is a $(v_2, v_4)$-path in $G[A_1 \cup A_2 \cup V (Q)]$ of length $\ell + 2i$.
\end{proof}

\noindent
\textbf{Proof of Lemma \ref{1-adjuster}}
Next, we give a proof of Lemma \ref{1-adjuster}. A vertex set $A$ has \emph{k-limited} contact with a vertex set $X$ in a graph $H$ if, for each $i \in \mathbb{N}$,
\[
|N_H(B_{H-X}^{i-1}(A)) \cap X| \le ki.
\]
We need  some lemmas from Liu and Montgomery \cite{2023liu}.
\begin{lemma}[Lemma 3.7 in \cite{2023liu}]\label{liu3.7}
For each $0 < \varepsilon_1 < 1$, $0 < \varepsilon_2 < 1/5$ and $k \in \mathbb{N}$, there exists $d_0=d_0(\varepsilon_1, \varepsilon_2, k)$ such that the following holds for each $n \ge d \ge d_0$. Suppose that $G$  is an $n$-vertex bipartite $(\varepsilon_1, \varepsilon_2d)$-expander with $\delta(G) \ge d$. Let $U \subseteq V (G)$ satisfy
$|U| \le \exp((\log \log n)^2)$. Let $r = n^{1/8}$ and $\ell_ 0 = (\log \log n)^{20}$. Suppose $(A_i, B_i, C_i)$, $i \in [r]$, are such that the following hold for each $i \in [r]$.
\stepcounter{propcounter}
\begin{enumerate}[label = ({\bfseries \Alph{propcounter}\arabic{enumi}})]
\rm\item\label{App-ball-build1} $|A_i| \ge d_0$.

\rm\item\label{App-ball-build2} $B_i \cup C_i$ and $A_i$ are disjoint sets in $V (G) \backslash U$, with $|B_i|\le\frac{|A_i|}{ \log^{10} |A_i|}$.

\rm\item\label{App-ball-build3} $A_i$ has 4-limited contact with $C_i$ in $G - U- B_i$.

\rm\item\label{App-ball-build4} Each vertex in $B^{\ell_0}_{G-U-B_i-C_i}(A_i)$ has at most $d/2$ neighbors in $U$.

\rm\item\label{App-ball-build5} For each $j \in [r] \backslash \{i\}$, $A_i$ and $A_j$ are at least a distance $2\ell_ 0$ apart in $G- U- B_i- C_i-B_j- C_j$.
\end{enumerate}
Then $B^{\ell_0}_{G-U-B_i-C_i}(A_i)\ge \log^kn$ holds for some $i \in [r]$.
\end{lemma}

\begin{lemma}[Lemma 3.12 in \cite{2023liu}]\label{liu3.12}
For any $0 < \varepsilon_1, \varepsilon_2 < 1$, there exists $d_0=d_0(\varepsilon_1, \varepsilon_2)$ such that the
following holds for each $n \ge d \ge d_0$. Suppose that $G$ is an $n$-vertex bipartite
$(\varepsilon_1, \varepsilon_2d)$-expander with $\delta(G) \ge d$ and let $m = \frac{50}{\varepsilon_1 } \log^3 n$.
For any set $W \subseteq V (G)$ with $|W| \le \frac{\varepsilon_1n}{100 \log^2n}$, there is a set $B \subseteq G - W$ with size at least $\frac{n}{25}$ and diameter at most $2m$, and such that $G[B]$ is a $(D,m)$-expansion around some vertex $v \in B$ for $D = |B|$.
\end{lemma}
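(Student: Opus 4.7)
The plan is to take $B := B^{m}_{G-W}(v)$ for a carefully chosen $v \in V(G)\setminus W$; then $G[B]$ automatically realises a $(|B|, m)$-expansion around $v$ (a shortest $(v,u)$-path of length at most $m$ in $G-W$ lies inside $B$ by definition of the BFS ball), and its diameter is at most $2m$ by the triangle inequality. The whole task thus reduces to exhibiting $v$ with $|B^{m}_{G-W}(v)|\ge n/25$.

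First I would show that $G-W$ contains a connected component $C$ with $|C|>n/2$. For any component $C_j$ of $G-W$, every $G$-neighbour of $C_j$ outside $C_j$ must lie in $W$ (otherwise it would belong to the same component of $G-W$). Hence whenever $\varepsilon_2 d/2\le|C_j|\le n/2$, the sublinear expansion of $G$ yields
\[
|W|\;\ge\;\varepsilon(|C_j|)\,|C_j|\;=\;\frac{\varepsilon_1\,|C_j|}{\log^2(15|C_j|/(\varepsilon_2 d))},
\]
which together with $|W|\le\varepsilon_1 n/(100\log^2 n)$ forces $|C_j|\le n/100$. If no component had size exceeding $n/2$, one could greedily union components (each of size at most $\max\{n/100,\,\varepsilon_2 d/2\}<n/4$, using $\varepsilon_2<1$ and $d\le n/2$ in any bipartite graph) into a set $X\subseteq V(G)\setminus W$ with $|X|\in[n/4,n/2]$, and the same boundary argument applied to $X$ would give $|W|\ge\varepsilon_1 n/(4\log^2 n)$, contradicting the hypothesis.

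Next, fix $v\in C$ and let $X_i:=B^{i}_{G-W}(v)\subseteq C$. Since $N_{G-W}(X_i)=N_G(X_i)\setminus W$, sublinear expansion supplies $|X_{i+1}|\ge|X_i|+\varepsilon(|X_i|)|X_i|-|W|$ for $\varepsilon_2 d/2\le|X_i|\le n/2$. A direct computation shows that once $|X_i|\ge n/50$, the gain $\varepsilon(|X_i|)|X_i|$ dominates $2|W|$, giving the multiplicative bound
\[
|X_{i+1}|\;\ge\;|X_i|\Bigl(1+\frac{\varepsilon_1}{2\log^2 n}\Bigr),
\]
so $|X_i|$ doubles from $n/50$ past $n/25$ within $O(\log^2 n/\varepsilon_1)\ll m$ rounds. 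Consequently, it suffices to show that the BFS reaches size $n/50$ within the first $m/2$ rounds.

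The main obstacle is the earlier regime $1\le|X_i|<n/50$, where $|W|$ can be comparable to (or even exceed) $\varepsilon(|X_i|)|X_i|$, so the per-round inequality does not obviously multiply. To finesse this I would first choose $v\in C$ with $|N(v)\cap W|\le d/2$ (such a $v$ exists since $\sum_{v\in V(G)\setminus W}|N(v)\cap W|\le|W|\cdot n$ by double-counting, which excludes only a negligible fraction of $C$ under the hypotheses), giving the head start $|X_1|\ge d/2\ge\varepsilon_2 d/2$. From there, a telescoping round-by-round analysis—summing the gains $\varepsilon(|X_i|)|X_i|$ against the total $|W|$-loss over $m/2$ rounds, and invoking the expansion of $G$ to forbid $X_i$ from being cut off from $C\setminus X_i$ by at most $|W|$ vertices—forces $|X_i|$ past $n/50$ inside $m/2$ BFS rounds. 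Chaining this with the multiplicative phase above yields $|X_m|\ge n/25$, and $B:=X_m$ is the desired set.
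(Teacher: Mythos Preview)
The paper does not prove this lemma; it is quoted from Liu--Montgomery \cite{2023liu} (their Lemma~3.12) and used as a black box in the appendix. So there is no in-paper proof to compare against, and I can only assess your argument on its own.

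Your overall scheme---take $B=B^{m}_{G-W}(v)$ for a well-chosen $v$---is the natural one, and two of your three steps are fine: the existence of a component $C$ of $G-W$ with $|C|>n/2$ follows as you describe, and once $|X_i|\ge n/50$ you correctly get multiplicative growth $|X_{i+1}|\ge|X_i|\bigl(1+\varepsilon_1/(2\log^2 n)\bigr)$, which carries you past $n/25$ in $o(m)$ further rounds.

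The problem is the ``earlier regime'' $d/2\le|X_i|<n/50$, which you dispose of with a telescoping sketch. This is a genuine gap. Summing the per-round inequality over $m/2$ rounds incurs a cumulative loss of order
\[
\frac{m}{2}\,|W|\;\approx\;\frac{25}{\varepsilon_1}\log^3 n\cdot\frac{\varepsilon_1 n}{100\log^2 n}\;=\;\frac{n\log n}{4},
\]
which swamps any conceivable gain; and ``not being cut off from $C\setminus X_i$'' gives only $|X_{i+1}|\ge|X_i|+1$, hence $|X_m|\le d/2+m=o(n)$. Concretely, when $d$ is bounded (say $d=d_0$) while $n\to\infty$, one has $\varepsilon(|X_1|)\,|X_1|=\Theta(d)$ whereas $|W|$ may be as large as $\varepsilon_1 n/(100\log^2 n)$, so your recursion yields no nontrivial lower bound on $|X_i|$ for any $i\le m$. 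The passage from size $\Theta(d)$ to size $\Theta(n)$ in $O(\log^3 n)$ BFS steps inside $G-W$ is precisely the nontrivial content of the lemma, and your outline does not supply it; an additional idea---for instance a finer accounting of which vertices of $W$ actually meet the current boundary, or an averaging over the starting vertex---is required before this becomes a proof.
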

\begin{lemma}[Lemma 4.2 in \cite{2023liu}]\label{liu4.2}
For any $0 < \varepsilon_1 < 1$, $0 < \varepsilon_2 < 1/5$ and $k \in \mathbb{N}$, there exists $d_0=d_0(\varepsilon_1, \varepsilon_2, k)$ such that the following is true for each $n \ge d \ge d_0$. Suppose that $G$ is an $n$-vertex bipartite $(\varepsilon_1, \varepsilon_2d)$-expander with $\delta(G) \ge d-1$. Let $C$ be a shortest cycle in $G$ and let $x_1$, $x_2$ be distinct vertices in $V (G) \backslash V (C)$. Let $m = \frac{200}{\varepsilon_1}\log^3 n$ and $D \le \log^{5k}n$. Then, $G$ contains a $(D, m, 1)$-adjuster $(v_1, F_1, v_2, F_2, A)$ with $v_1 = x_1$, $v_2 = x_2$ and $V (C) \subseteq A$.
\end{lemma}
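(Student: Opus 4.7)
The plan is to build a $(D, m, 1)$-adjuster whose center set $A$ contains $V(C)$, with the two required $(x_1,x_2)$-paths of lengths $\ell$ and $\ell+2$ obtained by going from $x_1$ along a short path to some $u_1 \in V(C)$, then around $C$ via one of its two arcs to some $u_2 \in V(C)$, and finally from $u_2$ along a short path to $x_2$. Since $G$ is bipartite, $|C|=2r_0$ is even, so if I arrange the arcs from $u_1$ to $u_2$ on $C$ to have lengths $r_0-1$ and $r_0+1$, the two resulting $(x_1,x_2)$-paths differ by exactly $2$, which is precisely what condition \textbf{A4} of Definition \ref{defad} demands for $k=1$.

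First I would apply Lemma \ref{liu3.11} with $k=2$, using the vertices $x_1,x_2$ and two expansion sizes per vertex: set $D_{i,1}=D$ (these will be the $F_i$ of the adjuster) and $D_{i,2}=\log^{10}n$ (auxiliary ``launch pad'' expansions $F_i'$). Lemma \ref{liu3.11} provides all four $(5m)$-expansions $F_1,F_2,F_1',F_2'$ around $x_1,x_2$, pairwise disjoint outside their cores, and with $V(F_{i,j})\setminus\{x_i\}$ disjoint from $V(C)$. Thus $F_1$ and $F_2$ already satisfy the expansion requirement of the adjuster and are disjoint from $V(C)$; what remains is to attach the cycle $C$ together with short $x_i$-to-$V(C)$ connectors.

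Next I would construct the connectors. Using Lemma \ref{liu-diam} with $A:=V(F_1')\setminus\{x_1\}$, $B:=V(C)$, and forbidden set $W:=(V(F_1)\cup V(F_2)\cup V(F_2'))\setminus\{x_1,x_2\}$, I obtain a path of length at most $\tfrac{40}{\varepsilon_1}\log^3 n$ from $F_1'$ into $V(C)$, arriving at some $u_1\in V(C)$; extending through $F_1'$ (which has diameter $\le 5m$ to $x_1$) gives an $(x_1,u_1)$-path $P_1$ of length at most $5m+\tfrac{40}{\varepsilon_1}\log^3 n \le m/3$. Crucially, $u_1$ is chosen by the lemma, not by me, so I then \emph{define} $u_2$ to be the unique vertex of $V(C)$ such that the two arcs from $u_1$ to $u_2$ have lengths $r_0-1$ and $r_0+1$; this adaptive choice of $u_2$ is what makes the parity trick work. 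Repeating the argument with $F_2'$ and target $\{u_2\}$, with the forbidden set enlarged to include $V(P_1)\cup V(F_1)\cup V(F_2)$ and $V(C)\setminus\{u_2\}$, yields a disjoint $(x_2,u_2)$-path $P_2$ of length at most $m/3$. To land exactly at $u_2$ (rather than anywhere on $C$), I iterate ball-growth from $F_2'$ and use the bipartite expander's rapid neighbourhood expansion to reach a small target set; the fact that $|V(F_2')|=\log^{10}n$ is large while $|V(C)|=O(\log n/\log d)$ is small is handled by the slack between $5m$ and the typical diameter.

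Finally I would set $A:=V(C)\cup(V(P_1)\setminus\{x_1\})\cup(V(P_2)\setminus\{x_2\})$; then $V(F_1), V(F_2), A$ are pairwise disjoint by construction, and $|A|\le |V(C)|+|V(P_1)|+|V(P_2)|\le 2m/3+O(\log n/\log d)\le 10m$, since the girth of an expander of minimum degree $d$ is $O(\log_d n)\ll m$. The two $(x_1,x_2)$-paths in $G[A\cup\{x_1,x_2\}]$ have lengths $\ell(P_1)+(r_0-1)+\ell(P_2)$ and $\ell(P_1)+(r_0+1)+\ell(P_2)$, differing by $2$, so conditions \textbf{A1}--\textbf{A4} all hold with $V(C)\subseteq A$. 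The main obstacle is the joint disjointness and endpoint control: I cannot prescribe both endpoints of $P_1,P_2$ on $C$ in advance, so I exploit the freedom to pick $u_2$ after $u_1$ is revealed, and I use the auxiliary expansions $F_i'$ (provided by Lemma \ref{liu3.11}) as large, already-disjoint launch pads so that the small size of $V(C)$ does not obstruct the connection step.
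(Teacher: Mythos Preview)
This lemma is quoted from \cite{2023liu} and is not proved in the present paper, so there is no in-paper proof to compare against directly. I will therefore assess your proposal on its own terms.

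Your overall plan --- use Lemma~\ref{liu3.11} to produce pairwise-disjoint expansions around $x_1,x_2$ that avoid $V(C)$, route short connectors from $x_1,x_2$ into $C$ so that the two arcs between the landing points differ in length by exactly $2$, and set $A=V(C)\cup V(P_1)\cup V(P_2)$ --- is the right architecture. The gap is in how you land on $C$.

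You let $u_1$ be wherever the first connector $P_1$ happens to land, then \emph{define} $u_2$ to be a vertex at cycle-distance $r_0-1$ from $u_1$, and finally try to steer $F_2'$ to that single prescribed vertex. But the connection tools available (Lemma~\ref{diamter} and Lemma~\ref{liu-diam}) require both endpoint sets to have size at least $x$ while the forbidden set has size $O(x/\log^3 n)$. Your forbidden set already contains $V(F_1)\cup V(F_2)\cup V(P_1)\cup (V(C)\setminus\{u_2\})$, which has polylogarithmic size; to absorb it you would need the target side to have size at least a comparable power of $\log n$. Yet $\{u_2\}$ has size $1$, and even $N(u_2)\setminus V(C)$ has size only about $d$. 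The hypothesis is merely $d\ge d_0(\varepsilon_1,\varepsilon_2,k)$, a constant, so $d$ may be far smaller than any power of $\log n$, and your ``iterate ball-growth to a small target'' step cannot be made rigorous here.

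The clean fix is to reverse the order of choices: pick $u_1,u_2\in V(C)$ at cycle-distance $r_0-1$ \emph{first}, and then apply Lemma~\ref{liu3.11} with anchor set $\{x_1,x_2,u_1,u_2\}$ (the lemma places no restriction forbidding anchors on $C$). This produces, in addition to the size-$D$ ends $F_1,F_2$, large auxiliary expansions around each of $u_1,u_2$ as well as around $x_1,x_2$, all pairwise disjoint off their centres and avoiding $V(C)\setminus\{u_1,u_2\}$. Both connection steps then link a large launch pad around $x_i$ to a large expansion around $u_i$, so Lemma~\ref{liu-diam} applies cleanly; extending through the expansions yields the desired $(x_i,u_i)$-paths $P_i$, and the remainder of your argument (\textbf{A1}--\textbf{A4}, the bound $|A|\le 10m$) goes through unchanged.
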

\begin{proof}[Proof of Lemma \ref{1-adjuster}]
Let $0 < \varepsilon_1 < 1$ be small enough that the property in Corollary \ref{coro} holds. Suppose that $G - U$ contains no $(D, m, 1)$-adjuster. Let $\Delta= 200mD$, $L =\{v \in V (G) : d(v) \ge\Delta\}$ and $G'= G -L$, so that $\Delta(G') \le \Delta$. Set  $\ell_0 = (\log \log n)^{20}$. Let $U_0 = \{v \in V (G) \backslash U : d(v, U) \ge d/2\}$. Let $A=U_0$ and $B=U$.  Note that $G[A, B]$ does not contain a copy of $K_{s,t}$ with $t$ vertices in $A$ and $s$ vertices in $B$. Then, by Corollary \ref{coro2.6} with $\delta_{\ref{coro2.6}}=d/2$, we have $|A|\le\left(\frac{|B|\cdot et}{d/2}\right)^s\le\log^{10sk}n$. Therefore, we can assume that $|U_0| \le D^{10s}$. As $\delta(G) \ge d$ and $n \ge d_0(\varepsilon_1, \varepsilon_2, k)$ is large, $G - U$ contains at least $(n-|U|-|U_0|) \cdot d/2/4 \ge nd/8$ edges. Let $U_1 = U \cup U_0$, so that $|U_1| \le 10D+D^{10s} \le20 \log^{10sk}n$.

Take a maximal collection $\mathbf{A_0}$ of adjusters in $G - U$, such that the following
hold.

\stepcounter{propcounter}
\begin{enumerate}[label = ({\bfseries \Alph{propcounter}\arabic{enumi}})]
\rm\item\label{App-adjuster-build1}
 The sets $V (F_1 \cup F_2)$, $(v_1, F_1, v_2, F_2, A) \in \mathbf{A_0}$, are subsets of $V (G')$ and are
all at least a distance $10\ell_0$ apart from each other and from $U_1 \backslash L$ in $G'$.

\rm\item\label{App-adjuster-build2} For each $\mathcal{A}\in \mathbf{A}_0$, for some $m_\mathcal{A}$ with $\log^3 d_0 \le m_\mathcal{A} \le m$, $\mathcal{A}$ is an $(m^2_\mathcal{A}, m_\mathcal{A}, 1)$-adjuster.
\end{enumerate}

The following three claims can be found in \cite{2023liu}. For the sake of completeness, we include their  proofs here.

\begin{claim}\label{claim1liu}
$|\mathbf{A}_0| \ge n^{1/4}$.
\end{claim}
\begin{proof}
Suppose that $|\mathbf{A}_0| < n^{1/4}$. Let $W = (U_1 \cup
(\cup _{\mathcal{A}\in \mathbf{A}_0} V (\mathcal{A}))\backslash L$. For each $\mathcal{A} = (v_1, F_1, v_2, F_2, A)$ $\in\mathbf{A}_0$, $|V (\mathcal{A})| = |F_1|+|F_2|+|A|\le2m^2_\mathcal{A} + 10m_\mathcal{A} \le 3m^2$, and therefore $|W| \le  n^{1/4} \cdot 3m^3 + 200 \log^{2k} n \le n^{1/3}$. Let $W'= B^{10\ell_0}_{G'}(W)$, so, as $\Delta(G') \le\Delta$, we have that $|W'| \le 2|W| \cdot\Delta^{10\ell_0} \le n^{1/2}$.

Now, there are at most $|W'|\Delta \le n^{1/2}\Delta \le \frac{nd}{16}$ edges in $G$ with some vertex in $W'$. Let $\bar{d}
= \frac{d}{64}$. As $G - U$ contains at least $nd/8$ edges, $G- U - W'$	 contains at least $\frac{nd}{16}$ edges, so that $d(G-U -W') \ge d/8=8\bar{d}$. Then, by Corollary \ref{coro}, $G- U - W’$ contains an $(\varepsilon_1, \varepsilon_2\bar{d})$-expander $H$ with $\delta(H) \ge\bar{d}$. Let $C$ be a shortest cycle in $H$. We will consider two cases, depending on how many vertices of $L$ there are in $V(H)\backslash V(C)$.

The first case is that  $|(V(H) \backslash V (C)) \cap L| \le 1$. Let $H': = H-(V(H)\backslash V(C))\cap L$. Then   $\delta(H') \ge \bar{d}-1$. Note that, for each $X\subseteq V (H)$ with
$\varepsilon_2\bar{d}/2 \le |X|\le |H'|/2 < |H|/2$,
we have
\begin{align*}
|N_{H'}(X)|&\ge|N_H(X)|-1 \ge |X| \cdot\varepsilon(|X|, \varepsilon_1, \varepsilon_2\bar{d})-1\\
&\ge|X| \cdot\varepsilon(|X|, \varepsilon_1,\varepsilon_2\bar{d})/2+\varepsilon_2\bar{d}/4\cdot\varepsilon(\varepsilon_2\bar{d}/2,\varepsilon_1,\varepsilon_2\bar{d})-1\\
&\ge |X| \cdot\varepsilon(|X|, \varepsilon_1/2, \varepsilon_2\bar{d})+\varepsilon_2\bar{d}/4\cdot\varepsilon_1/\log^2(15/2)-1\ge|X|\cdot\varepsilon(|X|,\varepsilon_1/2,\varepsilon_2\bar{d})
\end{align*}
where the last inequality follows as $\bar{d}\ge \frac{d_0(\varepsilon_1, \varepsilon_2, k)}{64}$ is large. So, $H'$ is an $(\varepsilon_1/2, \varepsilon_2\bar{d})$-expander with  $\delta(H') \ge \bar{d}-1$. Note that $C$ is a shortest cycle in $H'$.

Let $m_{H'}= 200 \log^3 |H'|/\varepsilon_1 \le m$, and note that, as $|H'| \ge \delta(H')+1 \ge \bar{d}\ge \frac{d_0}{64}$, and $d_0 = d_0(\varepsilon_1, \varepsilon_2, k)$ is large, $m_{H'} \ge \log^3 d_0$. Picking arbitrary vertices $x_1$, $x_2 \in V (H') \backslash V (C)$ and noting that $\bar{d}\ge \frac{d_0(\varepsilon_1, \varepsilon_2, k)}{64}$ is large, by Lemma \ref{liu4.2} with $(k, D)_{\ref{liu4.2}}= (10, m^2_{H'} )$, $H'$ contains an $(m^2_{H'} , m_{H'} , 1)$-adjuster $(v_1, F_1, v_2, F_2, A)$ with $V (C) \subseteq A$. As $A$ is disjoint from $V (F1 \cup F2)$, $V (C) \subseteq A $ and $(V (H') \backslash V (C)) \cap L = \emptyset$, we have that$ V (F_1 \cup F_2)$ is disjoint from $L$, and hence lies in $V (G')$. Together with $V (F_1\cup F_2) \subseteq V (H')$ being disjoint from $W'$ and so $10\ell_0$-far in $G'$ from the ends of the adjusters in $\mathbf{A}_0$ and from $U_1 \backslash L$, this violates the maximality of $\mathbf{A}_0$, a contradiction.

The other case is that $|(V (H) \backslash V (C)) \cap L| \ge 2$. We claim that there is  a $(D, 2m, 1)$-adjuster in $G-U$. Let $x_1$, $x_2 \in (V (H) \backslash V (C)) \cap L$ be distinct
and let $m_{H'} = 200 \log^3 |H'|/\varepsilon_1 \le m$. By Lemma  \ref{liu4.2} with $(k, D)_{\ref{liu4.2}} = (1, 1)$, $H$
contains a $(1, m_{H'} , 1)$-adjuster $(v_1, F_1, v_2, F_2, A)$ with $v_1 = x_1$ and $v_2 = x_2$. Using that $|A| \le 10m_{H'} \le 10m$, $|U|\le 10D$, and $d_G(x_1)$, $d_G(x_2) \ge \Delta = 200mD$, pick disjointly sets $X_1 \subseteq N_G(x_1) \backslash (U \cup A \cup \{x_2\})$ and $X_2 \subseteq N_G(x_2) \backslash (U \cup A \cup \{x_1\})$ with $|X_1| = |X_2| = D-1$. Letting $F_i'= G[\{x_i\} \cup X_i]$ for each $i \in [2]$, and noting $|A| \le 20m$, we have that $(x_1, F_1', x_2, F_2', A)$ is a $(D, 2m, 1)$-adjuster in $G-U$, a contradiction.

This completes the proof of Claim\ref{claim1liu}. 
\end{proof}
Now, let $\mathbf{A}_1 \subseteq \mathbf{A}_0$ be the set of adjusters $(v_1, F_1, v_2, F_2, A) \in \mathbf{A}_0$ for which there is no path with length at most $\ell_0$ from $V (F_1) \cup V (F_2)$ to $L \backslash U$ in $G - U - A$.
\begin{claim}\label{claim2liu}
$|\mathbf{A_1}| \ge n^{1/4}/2$.
\end{claim}
\begin{proof}[Proof of claim \ref{claim2liu}]
Let $r = n^{1/8}$. Suppose, for contradiction, that we can label distinct $\mathcal{A}_1, \ldots, \mathcal{A}_r \in \mathbf{A_0} \backslash \mathbf{A_1}$. For each $i \in [r]$, that $\mathcal{A}_i = (v_{i,1}, F_{i,1}, v_{i,2}, F_{i,2}, \bar{A_i})$ and let $P_i'$ be a shortest path with length at most  $\ell_0$ from $V (F_{i,1}) \cup V (F_{i,2})$ to $L \backslash U$ in $G-U - \bar{A_i}$. Relabelling, if necessary, for each $i \in [r]$ suppose the endvertex of $P_i'$ in $V (F_{i,1} \cup F_{i,2})$ is in $V (F_{i,1})$, and let $Q_i$ be a path from this endvertex of $P_i'$ to $v_{i,1}$ in $F_{i,1}$ with length at most $m_{\mathcal{A}_i}$.

For each $i \in [r]$, let $x_i$ be the endpoint of $P_i'$ in $L\backslash U$, and let $P_i = P_i'-x_i$. We shall apply Lemma \ref{liu3.7} by setting, for each $i \in [r]$, $A_i = V (F_{i,2})$, $B_i = \bar{A_i} \cup V (Q_i)\cup \{x_i\}$ and $C_i = V (P_i)$. By \ref{App-adjuster-build2}, $|A_i| = m^2_{\mathcal{A}_i} \ge \log^6 d_0$. Since  $d_0 = d_0(\varepsilon_1, \varepsilon_2, k)$ is large, we have that $|A_i| \ge d_{0\ref{liu3.7}}$, where $d_{0}$ is the function in Lemma \ref{liu3.7}, and thus  \ref{App-ball-build1} holds.

By \ref{Ad1}, we have that $V (F_{i,2})\cap V(F_{i,1})\cap(\bar{A_i})=\emptyset$. By \ref{App-adjuster-build1}, $V (F_{i,2}) \subseteq V (G') = V (G) \backslash L$. So that  $B_i \cup C_i$ and $A_i$ are disjoint. Since $B_i = \bar{A}_i \cup V (Q_i)\cup \{x_i\}$, $|B_i| \le |\bar{A}_i| + |Q_i| + 1 \le 20m_{\mathcal{A}_i} \le \frac{m^2_{\mathcal{A}_i}}{\log^{10}(m^2_{\mathcal{A}_i})}$ where the last inequality holds as $m_{\mathcal{A}_i} \ge \log^3(d_0(\varepsilon_1, \varepsilon_2, k))$ is large, and therefore \ref{App-ball-build2} holds.

As $P_i'$ is a shortest path from $V(F_{i,1}) \cup V (F_{i,2})$ to $L \backslash U$ in $G-U-\bar{A_i}$, which has an endvertex in $V(F_{i,1})$, and $A_i = V(F_{i,2})$, we have, for each  $\ell \in \mathbb{N}$, that $B^{\ell}_{G-U-\bar{A_i}}(A_i)$ has at most  $\ell+1$ vertices in $P_i'$, and hence $P_i$. Then, $A_i$ has 4-limited contact with $C_i$ in $G-U-\bar{A_i}$, and hence in $G-U-B_i$. So that \ref{App-ball-build3} holds.

Let $R_i$ be a path with length at most $10\ell_0$ from $A_i$ to $L\backslash(U\cup\{x_i\})$ in $G-U-B_i-C_i$. Then, there is a path $R_i' \subseteq R_i \cup F_{i,2}$ from $v_{i,2}$ to some vertex $y_i \in L\backslash (U \cup \{x_i\})$ with length at most $10\ell_0 + m_{\mathcal{A}_i} \le 2m-1$, and the path $Q_i\cup P_i'$ is a path from $v_{i,1}$ to $x_i$ with length at most $m_{\mathcal{A}_i} + \ell_0 \le 2m-1$ in $G-U-\bar{A_i}$ with vertices in $B_i \cup C_i$. Then, as $|U \cup A_i \cup V (R_i')\cup V (Q_i \cup P_i')| \le 10D + 10m_{\mathcal{A}_i} + 4m \le 10D+15m$, as $x_i$, $y_i \in L$ both have degree at least $\Delta = 200mD$, we can comfortably choose $X_i \subseteq N_G(x_i)$ and $Y_i \subseteq N_G(y_i)$ which are disjoint from each other and from $U \cup A_i \cup V (R_i') \cup V (Q_i \cup P_i')$ and have size $D -|P_i'\cup Q_i|$ and $D -|R_i'|$ respectively. Then, $(v_{i,1}, G[X_i \cup V (P_i') \cup V (Q_i)]$, $v_{i,2}$, $G[Y_i \cup V (R_i')], A_i)$ is a $(D, 2m, 1)$-adjuster in $G-U$, a contradiction. Therefore, there is no such path $R_i$. Consequently, recalling that $A_i = V (F_{i,2})$, we have
\[
B^{\ell_0}_{G-U-B_i-C_i}(A_i)=B^{\ell_0}_{G'-U-B_i-C_i}(A_i)
\]
which, by \ref{App-adjuster-build2}, is disjoint from $U_1$. By the choice of $U_0\subseteq U_1$, we have that \ref{App-ball-build4} holds.

Now, similarly, for any $j \in [r] \backslash \{i\}$, we have that
\[
B^{\ell_0}_{G-U-B_j-C_j}(A_j)=B^{\ell_0}_{G'-U-B_j-C_j}(A_j),
\]
so that, by \ref{App-adjuster-build1}, $B^{\ell_0}_{G-U-B_i-C_i}(A_i)$ and $B^{\ell_0}_{G'-U-B_j-C_j}(A_j)$ are disjoint. In particular, $A_i$ and $A_j$ are a distance at least $2\ell_0$ apart in $G-U-B_i-C_i-B_j-C_j$, and therefore \ref{App-ball-build5} holds.

Thus, by Lemma \ref{liu3.7}, there is some $j \in [r]$ for which $|B^{\ell_0}_{G-U-B_i-C_i}(A_j)|\ge\log^k n \ge D$. As $F_{j,2}$ is an $(m^{2}_{\mathcal{A}_j}, m_{\mathcal{A}_j})$-expansion of $v_{j,2}$ in $G-U-B_j-C_j$, $m_{\mathcal{A}_j}\le m$ and $A_j = V (F_{j,2})$, we have that $|B^{2m}_{G-U-B_i-C_i}(v_{j,2})|\ge D$ as  $\ell_0\ll m$. Therefore, by Proposition \ref{3.4}, we can pick a $(D, 2m)$-expansion, $F_{j,2}'$ say, of $v_{j,2}$ in $G-U-B_i-C_j$.

As $x_j \in L$, we can then pick a set $U'$ of neighbors of $x_j$ disjoint from $U\cup V (F'_{j,2})\cup\bar{A}_j\cup V(Q_j)\cup V(P_j')$ with $|U'|=D-|V(P_j'\cup Q_j)|$. Let $F_{j,1}' = G[U'\cup V (P_j')\cup V(Q_j)]$.
Note that $F'_{j,1}$ is then a $(D, 2m)$-expansion of $v_{j,1}$ as $Q_j \cup P_j'$ is a $(v_{j,1}, x_j)$-path with
length at most $m_{\mathcal{A}_j} +  \ell_0 \le 2m-1$. Finally, note that $(v_{j,1}, F_{j,1}', v_{j,2}, F_{j,2}', \bar{A}_j)$ is a $(D, 2m, 1)$-adjuster in $G-U$, a contradiction. Therefore, $|\mathbf{A}_0 \backslash \mathbf{A}_1| < r = n^{1/8}$, and so by Claim \ref{claim1liu}, we have $|\mathbf{A}_1| > n^{1/4}-r \ge n^{1/4}/2$.
\end{proof}

Let $\mathbf{A}_1' \subseteq \mathbf{A}_1$ satisfy $|\mathbf{A}'_1| = n^{1/4}/2$. Then, $|\cup_{\mathcal{A}\in \mathbf{A}_1\mathbf{'}} V (\mathcal{A})| \le n^{1/4} \cdot 3m^2 \le n^{1/3}$ by \ref{App-adjuster-build1}. Therefore,
\[
|U \cup B^{\ell_0}_{G'} (\cup_{\mathcal{A}\in A_1'} (V (\mathcal{A})\backslash L))| \le 10D + n^{1/3} \cdot 2\Delta^{\ell_0} \le n^{1/2}.
\]
Thus, by Lemma \ref{liu3.12}, there is a set $Z \subseteq V (G) \backslash U$ which has diameter at most $m/2$ and size $10m^2D$, and is a distance at least $\ell_0$ in $G'$	 from $V (\mathcal{A}) \backslash L$ for each $\mathcal{A} \in \mathbf{A}'_1$.

Let $\mathbf{A}_2\subseteq \mathbf{A}_1'$ be the set of adjusters $(v_1, F_1, v_2, F_2, A) \in \mathbf{A}_1'$ for which there is no path with length at most $m/2$ from $V (F_1) \cup V (F_2)$ to $Z$ in $G-U-A$.
\begin{claim}\label{claim3liu}
$|\mathbf{A_2}| \ge n^{1/4}/4$.
\end{claim}
\begin{proof}[Proof of claim \ref{claim3liu}.]
Let $r = n^{1/8}$. Suppose not,  and we can label distinct $\mathcal{A}_1,
\ldots, \mathcal{A}_r \in \mathbf{A_1'} \backslash \mathbf{A_2}$. Say, for each $i \in [r]$, that $\mathcal{A}_i = (v_{i,1}, F_{i,1}, v_{i,2}, F_{i,2}, \bar{A}_i)$ and let $P_i$ be a shortest path with length at most $m/2$ from $V (F_{i,1}) \cup V (F_{i,2})$ to $Z$ in
$G- U -\bar{A}_i$. Relabelling, if necessary, for each $ i\in [r]$ suppose the endvertex of $P_i$
in $V (F_{i,1} \cup F_{i,2}$) is in $V (F_{i,1})$, and let $Q_i$ be a path from this endvertex of $V (P_i)$
to $v_{i,1}$ in $F_{i,1}$ with length at most $m_{\mathcal{A}_i}$.

We will apply Lemma \ref{liu3.7} to $A_i = V (F_{i,2})$, $B_i = \bar{A}_i \cup V (Q_i)$ and $C_i = V (P_i)$, for each $i\in [r]$. For each $i \in [r]$, similarly to the proof of Claim \ref{claim2liu}, we have that \ref{App-ball-build1}-\ref{App-ball-build3} hold. By the choice of $\mathbf{A_1}$, for each $i \in[r]$, there is no path of length at most $\ell_0$ from $A_i$ to $L \backslash U$ in $G - U - B_i - C_i$. Therefore, the sets $B^{\ell_0}_{G-U-B_i-C_i}(A_i)$ and $B^{\ell_0}_{G'-U-B_i-C_i}(A_i)$ are the same set, and thus, by \ref{App-adjuster-build1}, this set is disjoint from $U_1$. Thus, \ref{App-ball-build4} holds by the definition of $U_1$. It similarly follows that $B^{\ell_0}_{G-U-B_i-C_i}(A_i)$ and $B^{\ell_0}_{G-U-B_j-C_j}(A_j)$ are vertex disjoint for each $j \in [r] \backslash \{i\}$, and thus \ref{App-ball-build5} holds.

Thus, by Lemma \ref{liu3.7}, there is some $j \in [r]$ for which $|B^{\ell_0}_{G'-U-B_i-C_i}(A_j)|=|B^{\ell_0}_{G-U-B_j-C_j}(A_j)|\ge D$. Thus, as $F_{j,2}$ is an $(m^2_{\mathcal{A}_j}, m_{\mathcal{A}_j})$-expansion of $v_{j,2}$ in $G'-U-B_j-C_j$ by \ref{App-adjuster-build1} and \ref{App-adjuster-build2}, and $A_j = V (F_{j,2})$, by Proposition \ref{3.4}, there is a $(D, 2m)$-expansion, $F'_{j,2}$ say, of $v_{j,2}$ in $B^{\ell_0}_{G'-U-B_j-C_j}(V(F_{j,2}))$. As $Z$ was chosen to have a distance at least $\ell_0$ in $G'$	 from $V (\mathcal{A}_j)\backslash L$, we have that $V(F'_{j,2})$ is disjoint from $Z$.

Now, as $Z$ has diameter at most $m/2$ in $G$, $Q_j \cup P_j \cup G[Z]$ is an expansion of $v_{j,1}$ with radius at most $\ell(Q_j ) +  \ell(P_j ) + m/2 \le 2m$ and size at least $D$. Therefore, by Proposition \ref{3.4}, we can find within $Q_j \cup P_j \cup G[Z]$ a $(D, 2m)$-expansion, $F'_{j,1}$ say, of $v_{j,1}$, which then must be vertex-disjoint from $\bar{A}_j$ and from $V(F'_{j,2})\subseteq B^{\ell_0}_{G'-U-B_j-C_j}(V(F_{j,2}))$. Thus, we have that $(v_{j,1}, F'_{j,1}, v_{j,2}, F'_{j,2}, \bar{A}_j)$ is a $(D, 2m, 1)$-adjuster in $G-U$, a contradiction. Thus, $|\mathbf{A_2}|\ge|\mathbf{A_1}| -r \ge n^{1/4}/4$, by
Claim \ref{claim2liu}.
\end{proof}
Let $r = n^{1/8}$. Using Claim \ref{claim3liu}, label distinct $\mathcal{A}_1, \ldots, \mathcal{A}_r \in \mathbf{A}_2$, and say, for each $i \in [r]$, that $\mathcal{A}_i = (v_{i,1}, F_{i,1}, v_{i,2}, F_{i,2}, \bar{A}_i)$. We shall apply Lemma \ref{liu3.7} to $A_i = V (F_{i,1} \cup F_{i,2})$, $B_i =\bar{A}_i$ and $C_i = \emptyset$. Similarly as in the proof of Claim \ref{claim3liu}, the only difference being that \ref{Ad3} holds trivially as $C_i = \emptyset$ and $A_i$ is slightly larger, we have that \ref{Ad3}-\ref{Ad4} hold.

Thus, by Lemma \ref{liu3.7}, there is some $j \in [r]$ with $|B^{\ell_0}_{G-U-B_j-C_j}(A_j)|=|B^{\ell_0}_{G-U-B_j}(A_j)|\ge10m^2D\ge10\log^3n|U\cup B_j|$.
Therefore, by Lemma \ref{diamter}, there is a path in $G - U - B_j$ from $B^{\ell_0}_{G-U-B_j}(A_j)$ to $Z$
with length at most $m/4$. Then, as $A_j = V (F_{j,1} \cup F_{j,2})$ and $B_j = \bar{A}_j$, there is a
path in $G-U-\bar{A}_j$ from $V (F_{j,1}\cup F_{j,2})$ to $Z$ with length at most $m/2$, contradicting
$\mathcal{A}_j \in \mathbf{A}_2$, and completing the proof.
\end{proof}
\end{document}